\title{\bf Subgraph densities in signed graphons and the local Sidorenko conjecture}
\author{L\'aszl\'o Lov\'asz\footnote{Research supported by OTKA Grant No.~67867 and ERC Grant No.~227701.}\\
Institute of Mathematics, E\"otv\"os Lor\'and University\\
Budapest, Hungary}
\newtheorem{theorem}{Theorem}[section]
\newtheorem{prop}[theorem]{Proposition}
\newtheorem{lemma}[theorem]{Lemma}
\newtheorem{claim}{Claim}
\newtheorem{corollary}[theorem]{Corollary}
\newtheorem{example}{Example}
\newenvironment{proof}{\medskip\noindent{\bf Proof. }}{\hfill$\square$\medskip}
\newenvironment{proof*}[1]{\medskip\noindent{\bf Proof of #1.}}{\hfill$\square$\medskip}
\long\def\killtext#1{}
\begin{document}

\addtolength{\textwidth}{1in} \addtolength{\textheight}{1in}
\addtolength{\baselineskip}{3pt} \setlength{\oddsidemargin}{0.2in}

\def\hom{{\rm hom}}
\def\iso{{\rm iso}}
\def\inj{{\rm inj}}
\def\surj{{\rm sur}}
\def\Inj{{\rm Inj}}
\def\neighb{{\rm neighb}}
\def\inj{{\rm inj}}
\def\ind{{\rm ind}}
\def\sur{{\rm sur}}
\def\PAG{{\rm PAG}}
\def\mul{{\rm mult}}
\def\flat{{\rm flat}}
\def\supp{{\rm supp}}
\def\cov{{\sf cov}}
\def\eps{\varepsilon}
\def\thet{\vartheta}

\def\CUT{\text{\rm CUT}}
\def\IO{{\infty\to1}}
\def\GR{\text{\rm GR}}
\def\CLQ{\text{\rm CLQ}}
\def\LT{{\text{\rm LEFT}}}
\def\RT{{\text{\rm RIGHT}}}
\def\Haus{{\rm Hf}}

\def\Fi{\mathbf{\Phi}}

\def\maxcut{{\sf maxcut}}
\def\MaxCut{{\sf MaxCut}}
\def\id{{\rm id}}
\def\irreg{{\rm irreg}}

\def\tv{{\rm tv}}
\def\tr{{\rm tr}}
\def\cost{\hbox{\rm cost}}
\def\val{\hbox{\rm val}}
\def\rk{{\rm rk}}
\def\diam{{\rm diam}}
\def\Ker{{\rm Ker}}
\def\simi{{\rm sim}}

\def\eul{\text{\sf eul}}
\def\Eu{\text{\sf euln}}
\def\Tu{\text{\sf tut}}
\def\Cr{\text{\sf chr}}
\def\Fl{\text{\sf flo}}
\def\Fs{\text{\sf f}}
\def\Ham{\text{\sf ham}}
\def\Pmg{\text{\sf pmg}}
\def\Ecp{\text{\sf ecp}}
\def\Mch{\text{\sf match}}
\def\Edge{\text{\sf edge}}
\def\Subg{\text{\sf subg}}
\def\Loop{\text{\sf loop}}
\def\Sflow{\text{\sf sflo}}

\def\Pr{{\sf P}}
\def\E{{\sf E}}
\def\Var{{\sf Var}}
\def\Ent{{\sf Ent}}
\def\PD{{\sf Pd}}
\def\SYM{{\sf Sym}}

\def\T{{^\top}}
\def\wt{\widetilde}
\def\wh{\widehat}

\def\unl#1{(#1)^\text{unl}}
\def\sbd#1{#1^\text{\rm sub}}
\def\ns#1{{{\rm NS}_{#1}}}
\def\gprod{\ast}
\def\gpow{\ast}

\def\AA{\mathcal{A}}\def\BB{\mathcal{B}}\def\CC{\mathcal{C}}
\def\DD{\mathcal{D}}\def\EE{\mathcal{E}}\def\FF{\mathcal{F}}
\def\GG{\mathcal{G}}\def\HH{\mathcal{H}}\def\II{\mathcal{I}}
\def\JJ{\mathcal{J}}\def\KK{\mathcal{K}}\def\LL{\mathcal{L}}
\def\MM{\mathcal{M}}\def\NN{\mathcal{N}}\def\OO{\mathcal{O}}
\def\PP{\mathcal{P}}\def\QQ{\mathcal{Q}}\def\RR{\mathcal{R}}
\def\SS{\mathcal{S}}\def\TT{\mathcal{T}}\def\UU{\mathcal{U}}
\def\VV{\mathcal{V}}\def\WW{\mathcal{W}}\def\XX{\mathcal{X}}
\def\YY{\mathcal{Y}}\def\ZZ{\mathcal{Z}}

\def\Ab{\mathbf{A}}\def\Bb{\mathbf{B}}\def\Cb{\mathbf{C}}
\def\Db{\mathbf{D}}\def\Eb{\mathbf{E}}\def\Fb{\mathbf{F}}
\def\Gb{\mathbf{G}}\def\Hb{\mathbf{H}}\def\Ib{\mathbf{I}}
\def\Jb{\mathbf{J}}\def\Kb{\mathbf{K}}\def\Lb{\mathbf{L}}
\def\Mb{\mathbf{M}}\def\Nb{\mathbf{N}}\def\Ob{\mathbf{O}}
\def\Pb{\mathbf{P}}\def\Qb{\mathbf{Q}}\def\Rb{\mathbf{R}}
\def\Sb{\mathbf{S}}\def\Tb{\mathbf{T}}\def\Ub{\mathbf{U}}
\def\Vb{\mathbf{V}}\def\Wb{\mathbf{W}}\def\Xb{\mathbf{X}}
\def\Yb{\mathbf{Y}}\def\Zb{\mathbf{Z}}

\def\ab{\mathbf{a}}\def\bb{\mathbf{b}}\def\cb{\mathbf{c}}
\def\db{\mathbf{d}}\def\eb{\mathbf{e}}\def\fb{\mathbf{f}}
\def\gb{\mathbf{g}}\def\hb{\mathbf{h}}\def\ib{\mathbf{i}}
\def\jb{\mathbf{j}}\def\kb{\mathbf{k}}\def\lb{\mathbf{l}}
\def\mb{\mathbf{m}}\def\nb{\mathbf{n}}\def\ob{\mathbf{o}}
\def\pb{\mathbf{p}}\def\qb{\mathbf{q}}\def\rb{\mathbf{r}}
\def\sb{\mathbf{s}}\def\tb{\mathbf{t}}\def\ub{\mathbf{u}}
\def\vb{\mathbf{v}}\def\wb{\mathbf{w}}\def\xb{\mathbf{x}}
\def\yb{\mathbf{y}}\def\zb{\mathbf{z}}

\def\Abb{\mathbb{A}}\def\Bbb{\mathbb{B}}\def\Cbb{\mathbb{C}}
\def\Dbb{\mathbb{D}}\def\Ebb{\mathbb{E}}\def\Fbb{\mathbb{F}}
\def\Gbb{\mathbb{G}}\def\Hbb{\mathbb{H}}\def\Ibb{\mathbb{I}}
\def\Jbb{\mathbb{J}}\def\Kbb{\mathbb{K}}\def\Lbb{\mathbb{L}}
\def\Mbb{\mathbb{M}}\def\Nbb{\mathbb{N}}\def\Obb{\mathbb{O}}
\def\Pbb{\mathbb{P}}\def\Qbb{\mathbb{Q}}\def\Rbb{\mathbb{R}}
\def\Sbb{\mathbb{S}}\def\Tbb{\mathbb{T}}\def\Ubb{\mathbb{U}}
\def\Vbb{\mathbb{V}}\def\Wbb{\mathbb{W}}\def\Xbb{\mathbb{X}}
\def\Ybb{\mathbb{Y}}\def\Zbb{\mathbb{Z}}

\def\R{{\mathbb R}}
\def\Q{{\mathbb Q}}
\def\Z{{\mathbb Z}}
\def\N{{\mathbb N}}
\def\C{{\mathbb C}}
\def\U{{\mathbb U}}
\def\Ge{{\mathbb G}}
\def\Ha{{\mathbb H}}

\def\one{{\sf\bf 1}}

\maketitle

\tableofcontents

\section{Introduction}

Let $F$ be a bipartite graph with $k$ nodes and $l$ edges and let $G$
be any graph with $n$ nodes and $m=p\binom{n}{2}$ edges. Sidorenko
\cite{Sid1,Sid2} conjectured that the number of copies of $F$ in $G$
is at least $p^l\binom{n}{k}+o(p^ln^k)$ (where we consider $k$ and
$l$ fixed, and $n\to\infty$). In a weaker form, this was also
conjectured by Simonovits \cite{Sim}.

One can get a cleaner formulation by counting homomorphisms instead
of copies of $F$. Let $\hom(F,G)$ denote the number of homomorphisms
from $F$ into $G$. Since we need this notion for the case when $F$
and $G$ are multigraphs, we count here pairs of maps $\phi:~V(F)\to
V(G)$ and $E(F)\to E(G)$ such that incidence is preserved: if $i\in
V(F)$ is incident with $e\in E(F)$, then $\phi(i)$ is incident with
$\psi(e)$. We will also consider the normalized version
$t(F,G)=\hom(F,G)/n^k$. If $F$ and $G$ are simple, then $t(F,G)$ is
the probability that a random map $\phi:~V(F)\to V(G)$ preserves
adjacency. We call this quantity the {\it density of $F$ in $G$}.

In this language, the conjecture says that $t(F,G)\ge
t(K_2,G)^{|E(F)|}$ (this is an exact inequality, no error terms.) We
can formulate this as an extremal result in two ways: First, for
every graph $G$, among all bipartite graphs with $l$ edges, it is the
graph consisting of disjoint edges (the matching) that has the least
density in $G$. Second, for every bipartite graph $F$, among all
graphs on $n$ nodes and edge density $p$, the random graph $\Ge(n,p)$
has the smallest density of $F$ in it (asymptotically, with large
probability).

Sidorenko proved his conjecture in a number special cases: for trees
$F$, and also for bigraphs $F$ where one of the color classes has at
most $4$ nodes. Since then, the only substantial progress was that
Hatami \cite{Hat} proved the conjecture for cubes.

Sidorenko gave an analytic formulation of this conjecture, which is
perhaps even cleaner, and which we will also use. Let $F$ be a
bipartite graph with bipartition $(A,B)$. For each edge $e\in E(F)$,
let $a(e)$ and $b(e)$ be the endpoints of $e$ in $A$ and $B$,
respectively. Assign a real variable $x_i$ to each $i\in A$ and a
real variable $x_j$ to each $j\in B$. Let $W:~[0,1]^2\to\R_+$ be a
bounded measurable function, and define
\[
t(F,W)=\int_{[0,1]^{V(F)}} \prod_{e\in E(F)} W(x_{a(e)},y_{b(e)})
\prod_{i\in A}dx_i\prod_{j\in B}dy_j.
\]

Every graph $G$ can be represented by a function $W_G$: Let
$V(G)=\{1,\dots,n\}$. Split the interval $[0,1]$ into $n$ equal
intervals $J_1,\dots,J_n$, and for $x\in J_i, y\in J_j$ define
$W_G(x,y)=\one_{ij\in E(G)}$. (The function obtained this way is
symmetric.) Then we have
\[
t(F,G)=t(F,W_G).
\]

In this analytic language, the conjecture says that for every
bipartite graph $F$ and function $W\in\WW_+$,
\begin{equation}\label{EQ:CONJ}
t(F,W)\ge t(K_2,W)^{|E(F)|}.
\end{equation}
Since both sides are homogeneous in $W$ of the same degree, we can
scale $W$ and assume that
\[
t(K_2,W)=\int_{[0,1]^2}W(x,y)\,dx\,dy=1.
\]
Then we want to conclude that $t(F,W)\ge 1$. In other words, the
function $W\equiv 1$ minimizes $t(F,W)$ among all functions $W\ge 0$
with $\int W=1$.

The goal of this paper is to prove that this holds locally, i.e., for
functions $W$ sufficiently close to $1$. Most of the time we will
work with the function $U=W-1$, which can negative values. Most of
our work will concern estimates for the values $t(F',U)$ for various
(bipartite) graphs $F'$. This type of question seems to have some
interest on its own, because it can be considered as an extension of
extremal graph theory to signed graphs.

\subsection{Notation}

For each bigraph, we fix a bipartition and specify a first and second
bipartition class. So the complete bigraphs $K_{a,b}$ and $K_{b,a}$
are different. If $F$ is a bigraph, then we denote by $F\T$ the
bigraph obtained by interchanging the classes of $F$.

We have to consider graphs that are partially labeled. More
precisely, a {\it $k$-labeled graph} $F$ has a subset $S\subseteq
V(F)$ of $k$ elements labeled $1,\dots,k$ (it can have any number of
unlabeled nodes. For some basic graphs, it is good to introduce
notation for some of their labeled versions. Let $P_n$ denote the
unlabeled path with $n$ nodes (so, with $n-1$ edges). Let $P'_n$
denote the path $P_n$ with one of its endpoints labeled. Let $P''_n$
denote the $P_n$ with both of its endpoints labeled. Let $C_n$ denote
the unlabeled cycle with $n$ nodes, and let $C_n'$ be this cycle with
one of its nodes labeled. Let $K_{a,b}$ denote the unlabeled complete
bipartite graph; let $K'_{a,b}$ denote the complete bipartite graph
with its $a$-element bipartition class labeled. Note that
$K_{2,2}\cong C_4$, but $K_{2,2}'$ and $C_4'$ are different as
partially labeled graphs.

The most important use of partial labeling is to define a {\it
product}: if $F$ and $G$ are $k$-labeled graphs, then $FG$ denotes
the $k$-labeled graph obtained by taking their disjoint union and
identifying nodes with the same label.

We set $I=[0,1]$. For a $k$-labeled graph $F$, $\unl{F}$ is the graph
obtained by unlabeling. We set $F_1\gprod F_2=\unl{F_1F_2}$, and
$F^{\gpow2}=F\gprod F$.

Let $\WW$ denote the set of bounded measurable functions
$U:~I^2\to\R$; $\WW_+$ is the set of bounded measurable functions
$U:~I^2\to\R_+$, and $\WW_1$ is the set of measurable functions
$U:~I^2\to[-1,1]$. Every function $U\in\WW$ defines a kernel operator
$L_1(f)\to L_1(f)$ by
\[
f \mapsto \int_I U(.,y)f(y)\,dy.
\]

For $U,W\in\WW$, we denote by $U\circ W$ the function
\[
(U\circ W)(x,y)=\int_I U(x,z)W(z,y)\,dz
\]
(this corresponds to the product of $U$ and $W$ as kernel operators).
For every $W\in\WW$, we denote by $W\T$ the function obtained by
interchanging the variables in $W$.

\subsection{Norms}

We consider various norms on the space $\WW$. We need the standard
$L_2$ and $L_\infty$ norms
\[
\|U\|_2=\Bigl(\int_{I^2} U(x,y)^2\,dx\,dy\Bigr)^2, \qquad
\|U\|_\infty=\text{sup ess} |U(x,y)|.
\]
For graph theory, the {\it cut norm} is very useful:
\[
\|U\|_\square = \sup_{S,T\subseteq I} \Bigl|\int_{S\times T}
U(x,y)\,dx\,dy\Bigr|.
\]
This norm is only a factor less than $4$ away from the operator norm
of $U$ as a kernel operator $L_\infty(I)\to L_1(I)$.

The functional $t(F,U)$ can be used define further useful norms. It
is trivial that $t(C_2,U)^{1/2}=\|U\|_2$. The value
$t(C_{2r},U)^{1/(2r)}$ is the $r$-th {\it Schatten norm} of the
kernel operator defined by $U$. It was proved in \cite{BCLSV1} that
for $U\in\WW_1$,
\[
\|U\|_\square^4\le t(C_4,U)\le \|U\|_\square.
\]
The other Schatten norms also define the same topology on $\WW_1$ as
the cut norm (cf. Corollary \ref{COR:4CYCLE}).

It is a natural question for which graphs does $t(F,W)^{1/|E(F)|}$ or
$t(F,|W|)^{1/|E(F)|}$ define a norm on $\WW$. Besides even cycles and
complete bipartite graphs, a remarkable class was found by Hatami: he
proved that $t(F,|W|)^{1/|E(F)|}$ is a norm if $F$ is a cube. He in
fact proved that Sidorenko's conjecture is true whenever $F$ is such
a ``norming'' graph. However, a characterization of such graphs is
open.

\section{Density inequalities for signed graphons}

\subsection{Ordering signed graphons}

For two bipartite multigraphs $F$ and $G$, we say that $F\le G$ if
$t(F,U)\le t(G,U)$ for all $U\in\WW_1$. We say that $G\ge 0$ if
$t(G,U)\ge0$ for all $U\in\WW_1$. Note that if $U$ is nonnegative,
then trivially $G\subseteq F$ implies that $t(F,U)\le t(G,U)$; but
since we allow negative values, such an implication does not hold in
general. For example, $F\ge 0$ cannot hold for any bigraph $F$ with
an odd number of edges, since then $t(F,-U)=-t(F,U)$.

We start with some simple facts about this partial order on graphs.

\begin{prop}\label{PROP:TRIV}
If $F$ and $G$ are nonisomorphic bigraphs without isolated nodes such
that $F\le G$, then $|E(F)|\ge|E(G)|$, $|E(G)|$ is even, and $G\ge
0$. Furthermore, $|t(F,U)|\le t(G,U)$ for all $U\in\WW_1$.
\end{prop}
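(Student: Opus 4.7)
I would test the hypothesis $F\le G$ on successively more structured $U\in\WW_1$. First, taking $U\equiv c$ with $c\in(0,1)$, the inequality becomes $c^{|E(F)|}\le c^{|E(G)|}$, and letting $c\to 0^+$ forces $|E(F)|\ge|E(G)|$.

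To pin down the parity of $|E(G)|$ I would use the sign-flip $-U\in\WW_1$: applying $F\le G$ to both $U$ and $-U$ produces the pair
\[
t(F,U)\le t(G,U),\qquad (-1)^{|E(F)|}t(F,U)\le (-1)^{|E(G)|}t(G,U).
\]
The second inequality evaluated at $U\equiv 1$ excludes the case ``$|E(G)|$ odd, $|E(F)|$ even'' (it would demand $1\le-1$). If both $|E(F)|$ and $|E(G)|$ are odd, the two inequalities combined give $t(F,U)=t(G,U)$ for every $U\in\WW_1$, and the graphon reconstruction theorem (homomorphism densities against all graphons determine a graph up to isomorphism) would force $F\cong G$, contradicting the nonisomorphism hypothesis. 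Hence $|E(G)|$ is even.

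With $|E(G)|$ even, $(-1)^{|E(G)|}=1$, and the sign-flipped inequality reads $(-1)^{|E(F)|}t(F,U)\le t(G,U)$. When $|E(F)|$ is odd this is $-t(F,U)\le t(G,U)$; combined with $t(F,U)\le t(G,U)$ it gives $|t(F,U)|\le t(G,U)$, and $G\ge 0$ is then automatic. When $|E(F)|$ is even the sign-flip trick is degenerate, so I would turn to scaling: the substitution $U\mapsto\lambda U$ produces $\lambda^{|E(G)|}t(G,U)-\lambda^{|E(F)|}t(F,U)\ge 0$ for every $\lambda\in[0,1]$; dividing by $\lambda^{|E(G)|}$ and sending $\lambda\to 0^+$ yields $t(G,U)\ge 0$ (i.e., $G\ge 0$) whenever $|E(F)|>|E(G)|$.

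The main obstacle is the residual subcase $|E(F)|=|E(G)|$, both even, with $F\not\cong G$: here both the sign-flip and the scaling are simultaneously uninformative (the inequality reduces in each to the original). To handle this, I would expand $t(G,c+\epsilon V)-t(F,c+\epsilon V)\ge 0$ as a polynomial in $\epsilon$ around a constant base point $U\equiv c\in(0,1)$. The $O(1)$ and $O(\epsilon)$ coefficients vanish automatically since $|E(F)|=|E(G)|$, so the leading constraint is the $O(\epsilon^2)$ coefficient, which is a linear combination of the three (individually nonnegative) densities $t(2K_2,V)$, $t(P_3^A,V)$, $t(P_3^B,V)$ with weights given by the counts of edge-pair types (disjoint, sharing an $A$-vertex, sharing a $B$-vertex) in $F$ and $G$. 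From this reflection-positive-type identity I would read off $G\ge 0$ and the full bound $|t(F,U)|\le t(G,U)$; this algebraic step is what I expect to be the delicate part of the proof.
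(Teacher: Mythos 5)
Your constant-function and sign-flip arguments are sound as far as they go: they give $|E(F)|\ge|E(G)|$, exclude the case $|E(G)|$ odd, and, when $|E(F)|$ is odd, they do yield the full conclusion $|t(F,U)|\le t(G,U)$. The gap is in the even-$|E(F)|$ cases. When $|E(F)|>|E(G)|$ your scaling argument gives $G\ge 0$ but never the other half $-t(F,U)\le t(G,U)$ of the two-sided bound, so the ``furthermore'' clause is unproved there. Worse, the residual case $|E(F)|=|E(G)|$, both even, is not resolved at all: the $O(\eps^2)$ coefficient of $t(G,c+\eps V)-t(F,c+\eps V)$ is a single linear inequality in the three edge-pair counts of $F$ and $G$ (disjoint pairs, pairs sharing an $A$-node, pairs sharing a $B$-node), valid for every $V$. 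That constrains those counts, but it cannot produce a lower bound on $t(G,U)$ for an arbitrary $U\in\WW_1$ far from constant, nor the bound $|t(F,U)|\le t(G,U)$; and the higher-order coefficients of the expansion need not be individually nonnegative once the quadratic term is strictly positive, so the expansion does not bootstrap. As written, two of the three assertions remain open in this case.

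The idea you are missing is the tensor-power trick, which is how the paper proves the whole statement in one stroke: $U\otimes U$ lies in $\WW_1$ (after a measure-preserving identification of $I\times I$ with $I$) and $t(F,U\otimes U)=t(F,U)^2$, so applying $F\le G$ to $U\otimes U$ gives $t(F,U)^2\le t(G,U)^2$, i.e.\ $|t(F,U)|\le|t(G,U)|$ for every $U$, with no case analysis on parities. To drop the absolute value on the right-hand side the paper then uses the perturbation form of the reconstruction fact you invoked (Lemma~\ref{LEM:NON-ISOM}): arbitrarily close to any $U$ there is $U'$ with $t(F,U')\ne t(G,U')$, and there $t(F,U')<t(G,U')$ together with $|t(F,U')|\le|t(G,U')|$ forces $t(G,U')>0$; continuity in the perturbation parameter then gives $t(G,U)\ge0$ everywhere, and evenness of $|E(G)|$ follows by replacing $U$ with $-U$. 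You would need both of these ingredients (or substitutes for them) to close your even cases.
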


The proof of this is based on a technical lemma, which is close to
facts that are well known, but not in the exact form needed here.

\begin{lemma}\label{LEM:NON-ISOM}
Let $F$ and $G$ be nonisomorphic graphs without isolated nodes. Then
for every $U\in\WW_1$ and $\eps>0$ there exists a function
$U'\in\WW_1$ such that $\|U-U'\|_\infty<\eps$ and
$t(F,U')\not=t(G,U')$.
\end{lemma}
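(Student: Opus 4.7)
My plan is to reduce the claim to a straightforward polynomial perturbation argument, with the one nontrivial input being that $t(F,\cdot)$ and $t(G,\cdot)$ already disagree somewhere on $\WW_1$. Once such a ``separating direction'' $V_0$ is found, I will slide $U$ slightly toward $V_0$ and use the fact that $t(F,\cdot)$ is a polynomial along any affine line.

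For the separating direction, I will invoke the classical theorem that nonisomorphic graphs without isolated nodes are distinguished by their homomorphism densities. Concretely, since $F\not\cong G$, Lov\'asz's reconstruction theorem yields a finite graph $H$ with $\hom(F,H)\ne\hom(G,H)$, and then $V_0:=W_H\in\WW_1$ satisfies $t(F,V_0)\ne t(G,V_0)$ (this is immediate when $|V(F)|=|V(G)|$; otherwise one first distinguishes $F$ and $G$ by a constant kernel when $|E(F)|\ne|E(G)|$, or picks $H$ with enough vertices so that the two normalized densities remain unequal). I expect this step to be the main obstacle: one should carefully verify the separation in the bigraph setting with a fixed bipartition that is used elsewhere in the paper, although this is a routine adaptation of the classical result.

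With $V_0$ in hand, I will set $U_\alpha:=(1-\alpha)U+\alpha V_0$ for $\alpha\in[0,1]$. Convexity of $\WW_1$ gives $U_\alpha\in\WW_1$, and
\[
\|U-U_\alpha\|_\infty=\alpha\,\|U-V_0\|_\infty\le 2\alpha,
\]
so $\|U-U_\alpha\|_\infty<\eps$ as soon as $\alpha<\eps/2$. Expanding each factor $U_\alpha(x_{a(e)},y_{b(e)})=(1-\alpha)U(x_{a(e)},y_{b(e)})+\alpha V_0(x_{a(e)},y_{b(e)})$ in the integrand defining $t(F,U_\alpha)$ shows that $t(F,U_\alpha)$ is a polynomial in $\alpha$ of degree at most $|E(F)|$, and similarly for $G$. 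Hence
\[
g(\alpha):=t(F,U_\alpha)-t(G,U_\alpha)
\]
is a polynomial in $\alpha$ of degree at most $\max(|E(F)|,|E(G)|)$, and $g(1)=t(F,V_0)-t(G,V_0)\ne0$ by the choice of $V_0$, so $g$ is a nonzero polynomial and has only finitely many zeros. I then pick $\alpha^*\in(0,\eps/2)$ avoiding these zeros and set $U':=U_{\alpha^*}$, which has all the required properties.
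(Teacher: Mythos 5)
Your argument is correct and follows essentially the same route as the paper: both reduce the lemma to Lov\'asz's cancellation/reconstruction theorem (the paper applies it in the contrapositive, deriving $F\cong G$ from $t(F,W)=t(G,W)$ for all $W$ after first matching $|V|$ and $|E|$ with the test kernels $\one_{x,y\le 1/2}$ and $W\equiv 1/2$), and both then perturb $U$ along the segment toward the separating kernel, using that the densities are polynomials in the interpolation parameter with a nonzero value at the far endpoint. Your handling of the normalization issue when $|V(F)|\ne|V(G)|$ is terser than the paper's but sound, so no essential gap remains.
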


\begin{proof}
First we show that if $F$ and $G$ are two graphs without isolated
nodes such that $t(F,W)=t(G,W)$ for every $W\in\WW_1$, then $F\cong
G$. Consider the function $U=\one_{x,y\le1/2}$. Then
$t(F,U)=2^{-|V(F)|}$, so $t(F,U)=t(G,U)$ implies that
$|V(F)|=|V(G)|$. Using the function $U\equiv1/2$, we get similarly
that $|E(F)|=|E(G)|$. Using this, we get (by scaling $W$) that
$t(F,W)=t(G,W)$ for every $W\in\WW$.

For every multigraph $H$ we have
\[
t(F,H)=t(F,W_H) = t(G,W_H)=t(G,H),
\]
and hence it follows that
\[
\hom(F,H)=t(F,H)|V(H)|^{|V(F)|}=t(G,H)|V(G)|^{|V(F)|}=\hom(G,H).
\]
From this it follows by standard arguments that $F\cong G$ (e.g., we
can apply Theorem 1(iii) of \cite{Lov71} to the $2$-partite
structures $(V,E,J)$, where $G=(V,E)$ is a multigraph and $J$ is the
incidence relation between nodes and edges).

Since $F$ and $G$ are non-isomorphic, this argument shows that there
exists a function $W\in\WW_1$ such that $t(F,W)\not=t(G,W)$. The
values $t(F,(1-s)U+sW)$ and $t(F,(1-s)U+sW)$ are polynomials in $s$
that differ for $s=0$. Therefore, there is a value $0\le s\le \eps$
for which they differ. Since $(1-s)U+sW\in\WW_1$ and
$\|U-((1-s)U+sW)\|_\infty = s\|U-W\|_\infty\le\eps$, this proves the
lemma.
\end{proof}

\begin{proof*}{Proposition \ref{PROP:TRIV}}
Applying the definition of $F\le G$ with $U=1/2$, we get that
$2^{-|E(F)|}\le2^{-|E(G)|}$, and hence $|E(F)|\ge|E(G)|$. The
relation $F\le G$ implies that $t(F,U)^2=t(F,U\otimes U)\le
t(G,U\otimes U)=t(G,U)^2$ also holds, so $|t(F,U)|\le |t(G,U)|$ for
all $U\in\WW_1$. By Lemma \ref{LEM:NON-ISOM}, $U$ can be perturbed by
arbitrarily little to get a $U'\in\WW_1$ with $t(F,U')\not= t(G,U')$,
then $t(F,U')<t(G,U')$ and $|t(F,U')|\le |t(G,U')|$ imply that
$t(G,U')>0$. Since $U'$ is arbitrarily close to $U$, this implies
that $t(G,U)\ge 0$, and so $G\ge0$. Since this holds for $U$ replaced
by $-U$, it follows that $G$ must have an even number of edges.
\end{proof*}

\subsection{Edge weighting models}

We need the following generalization of Cauchy--Schwarz:

\begin{lemma}\label{LEM:C-S}
Let $f_1,\dots,f_n:~I^k\to\R$ be bounded measurable functions, and
suppose that for each variable there are at most two functions which
depend on that variable. Then
\[
\int_{I^k} f_1\dots f_n \le \|f_1\|_2\dots\|f_n\|_2.
\]
\end{lemma}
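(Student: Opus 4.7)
The plan is to induct on the number of variables $k$, eliminating them one at a time by a single Cauchy--Schwarz. As a preliminary reduction, I note that $\int_{I^k} f_1\cdots f_n \le \int_{I^k} |f_1|\cdots|f_n|$ and $\||f_i|\|_2 = \|f_i\|_2$, so I may assume $f_i\ge 0$. The base case $k=0$ (all $f_i$ constants) is immediate, since then $\|f_i\|_2 = f_i$.

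For the inductive step, pick any variable, say $x_1$. By hypothesis, at most two functions depend on $x_1$; the substantive case is when exactly two do, say $f_1$ and $f_2$. Writing the integral as iterated (Fubini) and applying ordinary Cauchy--Schwarz in $x_1$,
\[
\int_I f_1(x_1,\cdot)\,f_2(x_1,\cdot)\,dx_1 \;\le\; g_1\cdot g_2, \qquad g_i := \Bigl(\int_I f_i(x_1,\cdot)^2\,dx_1\Bigr)^{1/2},
\]
yields $\int_{I^k}\prod_i f_i \le \int_{I^{k-1}} g_1\, g_2 \prod_{i\ge 3} f_i$. Each $g_i$ is a function of the remaining variables of $f_i$, and Fubini gives $\|g_i\|_2 = \|f_i\|_2$. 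If I can apply the induction hypothesis to the collection $\{g_1,g_2,f_3,\ldots,f_n\}$ on $I^{k-1}$, then the bound becomes $\|g_1\|_2\|g_2\|_2\prod_{i\ge 3}\|f_i\|_2 = \prod_i \|f_i\|_2$, as required. The cases where $x_1$ is absent from all $f_i$ (trivial: the $x_1$-integration contributes $1$) or lies in only one $f_1$ (introduce only $g_1 := (\int f_1^2\,dx_1)^{1/2}$, with $\int_I f_1\,dx_1 \le g_1$) are handled analogously.

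The only nontrivial step, and hence the main obstacle, is verifying that the hypothesis ``each variable depends on at most two of the functions'' is preserved by the replacement $\{f_1,f_2\}\mapsto\{g_1,g_2\}$. For any remaining variable $x_j$ with $j\neq 1$, its original multiplicity was at most $2$; after the replacement it appears in $g_1$ iff it appeared in $f_1$, in $g_2$ iff it appeared in $f_2$, and in exactly the same $f_i$ ($i\ge 3$) as before. Thus its new multiplicity equals its old one, in particular is at most $2$. This bookkeeping check (which uses that the maximum multiplicity was already $2$, so no variable could have been in both $f_1$ or $f_2$ \emph{and} some $f_j$ with $j\ge 3$ while also being in the other of $\{f_1,f_2\}$) is what closes the induction.
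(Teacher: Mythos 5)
Your proof is correct, but it takes a genuinely different route from the paper's. The paper deduces the lemma from an auxiliary ``edge-weighting'' inequality (Lemma \ref{LEM:EDGE-WEIGHT}): the functions are placed on the nodes of a multigraph whose edges are the variables, and the proof runs by induction on the chromatic number, applying Cauchy--Schwarz once to split the node set into two halves and replacing the graph by two ``doubled'' graphs of roughly half the chromatic number. You instead eliminate the variables one at a time: after reducing to $f_i\ge 0$, a single scalar Cauchy--Schwarz in $x_1$ replaces the (at most two) functions involving $x_1$ by their $L_2$-averages over $x_1$, which preserves both the $L_2$-norms (by Fubini) and the multiplicity condition, so induction on $k$ closes. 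Your argument is more elementary, needs no graph-theoretic scaffolding, and handles directly the degenerate cases where a variable occurs in zero or one of the $f_i$ (using $\lambda(I)=1$), which in the paper's approach require padding with dummy constant functions to fit the model of Lemma \ref{LEM:EDGE-WEIGHT}. One cosmetic remark: your closing parenthetical about the bookkeeping is more complicated than necessary --- under the replacement $\{f_1,f_2\}\mapsto\{g_1,g_2\}$ the set of functions depending on any remaining variable $x_j$ is unchanged (or shrinks), so the multiplicity bound is preserved trivially, with no case analysis.
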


This will follow from an inequality concerning a statistical physics
type model. Let $G=(V,E)$ be a multigraph (without loops), and let
for each $i\in V$ let $f_i\in L_2(I^E)$ such that $f_i$ depends only
on the variables $x_j$ where edge $j$ is incident with node $i$. Let
$f=(f_i:~i\in V)$, and define
\[
\tr(G,f)=\int_{I^E} \prod_{i\in V} f_i(x)\,dx
\]
(where the variables corresponding to the edges not incident with $i$
are dummies in $f_i$).

\begin{lemma}\label{LEM:EDGE-WEIGHT}
For every multigraph $g$ and assignment of functions $f$,
\[
\tr(G,f)\le \prod_{i\in V} \|f_i\|_2.
\]
\end{lemma}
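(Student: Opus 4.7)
The plan is to induct on the number of edges $|E|$, peeling off one edge variable at a time via the one-dimensional Cauchy--Schwarz inequality. A preliminary simplification is to replace each $f_i$ by $|f_i|$: since
\[
\tr(G,f)\le\int_{I^E}\prod_i|f_i|=\tr(G,|f|)\qquad\text{and}\qquad\||f_i|\|_2=\|f_i\|_2,
\]
I may assume that every $f_i$ is nonnegative.

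The base case $|E|=0$ is immediate, since each $f_i$ is then a nonnegative constant and $\tr(G,f)=\prod_i f_i=\prod_i\|f_i\|_2$. For the inductive step, pick any edge $e=\{i,j\}$ (with $i\neq j$, since $G$ is loopless). The essential observation is that by the structural assumption on the $f_k$'s, the variable $x_e$ appears as an argument of $f_i$ and $f_j$ only. I can therefore pull all the other factors outside an inner integration over $x_e$:
\[
\tr(G,f)=\int_{I^{E\setminus\{e\}}}\Bigl(\prod_{k\neq i,j}f_k\Bigr)\Bigl(\int_I f_i\,f_j\,dx_e\Bigr)\,dx_{E\setminus\{e\}}.
\]

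One-dimensional Cauchy--Schwarz in $x_e$ gives $\int_I f_if_j\,dx_e\le g_ig_j$ with
\[
g_i=\Bigl(\int_I f_i^2\,dx_e\Bigr)^{1/2},\qquad g_j=\Bigl(\int_I f_j^2\,dx_e\Bigr)^{1/2}.
\]
Substituting yields $\tr(G,f)\le\tr(G-e,f')$, where $f'$ agrees with $f$ except that $f_i,f_j$ are replaced by $g_i,g_j$. The function $g_i$ depends only on the $x_{e'}$ for edges $e'\neq e$ incident with $i$, i.e.\ on the edges at $i$ in $G-e$ (likewise for $g_j$), so $f'$ is a legitimate assignment on $G-e$; and by Fubini $\|g_i\|_2^2=\int g_i^2=\int f_i^2=\|f_i\|_2^2$, so $\|g_i\|_2=\|f_i\|_2$ (and similarly for $g_j$). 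Applying the inductive hypothesis to $(G-e,f')$ then gives $\tr(G-e,f')\le\prod_k\|f_k\|_2$, as required. I do not expect a genuine obstacle; the only place warranting a moment's thought is the bookkeeping for multi-edges, where the variables attached to parallel copies of $e$ persist after deleting a single copy, and $g_i,g_j$ legitimately continue to depend on them as endpoint functions in $G-e$.
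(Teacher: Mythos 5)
Your proof is correct, but it follows a genuinely different route from the paper. The paper proves Lemma \ref{LEM:EDGE-WEIGHT} by induction on the chromatic number of $G$: it splits the colour classes into two halves $S_1,S_2$, applies Cauchy--Schwarz to the integral over the crossing edge variables to get $\tr(G,f)^2\le\tr(G_1,f)\tr(G_2,f)$ for two ``doubled'' graphs $G_1,G_2$ of roughly half the chromatic number, and iterates. You instead induct on $|E|$, first reducing to nonnegative $f_i$ (harmless, since the bound only involves $\|f_i\|_2$), and then eliminating a single edge variable $x_e$ by one-dimensional Cauchy--Schwarz, using that $x_e$ occurs only in $f_i$ and $f_j$ for the two endpoints $i\neq j$ of $e$; the replacement functions $g_i=(\int_I f_i^2\,dx_e)^{1/2}$ are legitimate node functions for $G-e$ with unchanged $L_2$ norm, so the induction closes. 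All the steps check out: the nonnegativity reduction is exactly what licenses multiplying the pointwise Cauchy--Schwarz bound by $\prod_{k\neq i,j}f_k$ and integrating, and your remark about parallel edges is the right bookkeeping. Your argument is shorter and more elementary for this particular lemma; the paper's doubling argument has the virtue of being the same tensor-power/Cauchy--Schwarz template that recurs throughout the paper (e.g.\ in \eqref{EQ:C-H} and Lemma \ref{LEM:ATTACHED}), but nothing in the later applications depends on which proof of Lemma \ref{LEM:EDGE-WEIGHT} is used.
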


\begin{proof}
By induction on the chromatic number of $G$. Let $V_1,\dots,V_r$ be
the color classes of an optimal coloring of $G$. Let
$S_1=V_1\cup\dots\cup V_{\lfloor r/2\rfloor}$ and $S_2=V\setminus
S_1$. Let $E_0$ be the set of edges between $S_1$ and $S_2$, and let
$E_i$ be the set of edges induced by $S_i$. Let $x_i$ be the vector
formed by the variables in $E_i$. Then
\[
\tr(G,f)=\int_{I^{E_0}} \left(\int_{I^{E_1}}\prod_{i\in S_1}
f_i(x)\,dx_1 \right) \left( \int_{I^{E_2}}
 \prod_{i\in S_2} f_i(x)\,dx_2\right)\,dx_0.
\]
The outer integral can be estimated using Cauchy-Schwarz:
\begin{align}\label{TRPR}
\tr(G,f)^2 &\le \int_{I^{E_0}} \left(\int_{I^{E_1}}\prod_{i\in
S_1} f_i(x)\,dx_1 \right)^2\,dx_0\nonumber\\
&~~~\times\int_{I^{E_0}} \left( \int_{I^{E_2}} \prod_{i\in S_2}
f_i(x)\,dx_2\right)^2\,dx_0.
\end{align}
Let $G_1$ be defined as the graph obtained taking a disjoint copy
$(S_1',E_1')$ of the graph $(S_1,E_1)$, and connect each node $i\in
S_1$ to the corresponding node $i'\in S_1'$ by as many edges as those
joining $i$ to $S_2$ is $G$. Note that these newly added edges
correspond to the edges of $E_0$ in a natural way. We assign to each
node the same function as before, and also the same function (with
differently named variables for the edges in $E_1'$) to $i'$. Then
the first factor in \eqref{TRPR} can be written as
\[
\int_{I^{E_0}} \int_{I^{E_1}}\int_{I^{E'_1}}\prod_{i\in S_1\cup S_1'}
f_i(x)\,dx_1 \,dx_0 = \tr(G_1,f).
\]
We define $G_2$ analogously, and get that the second factor in
\eqref{TRPR} is just $\tr(G_2,f)$. So we have
\begin{equation}\label{EQ:TRCS}
\tr(G,f)^2 \le \tr(G_1,f)\tr(G_2,f)
\end{equation}

Next we remark that $G_1$ and $G_2$ have chromatic number at most
$\lceil r/2\rceil$, and so if $r>2$, then we can apply induction and
use that
\[
\tr(G_j,f) \le \prod_{i\in V(G_j)}\|f_i\|_2 = \prod_{i\in
S_j}\|f_i\|_2^2.
\]
If $r=2$, then $G_j$ has edges connecting pairs $i,i'$ only, and so
\[
\tr(G_j,f) = \prod_{i\in S_j}\|f_i\|_2^2.
\]
In both cases, the inequality in the lemma follows by
\eqref{EQ:TRCS}.
\end{proof}

\subsection{Inequalities between densities}

Let $F_1$ and $F_2$ be two $k$-labeled graphs. Then the
Cauchy--Schwarz inequality implies that for all $U\in\WW$,
\begin{equation}\label{EQ:C-H}
t(F_1\gprod F_2,U)^2 \le t(F_1^{\gpow2},U) t(F_2^{\gpow2},U).
\end{equation}
With the notation introduced above, this can be written as
\begin{equation}\label{EQ:C-H-2}
(F_1\gprod F_2)^2 \le F_1^{\gpow2} F_2^{\gpow2}.
\end{equation}
This also implies that for each $k$-labeled graph $F$,
\begin{equation}\label{EQ:POS}
F^{\gpow2}\ge 0.
\end{equation}

Let $\sbd{F}$ denote the subdivision of graph $F$ with one new node
on each edge.

\begin{lemma}\label{LEM:SUBDIV}
If $F\le G$, then $\sbd{F}\le \sbd{G}$.
\end{lemma}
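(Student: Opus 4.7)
The plan is to show that $t(\sbd{F},U)$ can be rewritten as $t(F,V)$ for a suitable $V\in\WW_1$ built from $U$, after which the hypothesis $F\le G$ applied to this $V$ finishes the proof.

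First I would unfold the definition of $t(\sbd{F},U)$. The subdivision $\sbd{F}$ has vertex set $V(F)\cup E(F)$ and, for each edge $e=\{a,b\}$ of $F$, two edges $\{a,e\}$ and $\{b,e\}$. Since $F$ is bipartite, $\sbd{F}$ is bipartite with $V(F)$ on one side and $E(F)$ on the other, so the bipartite integral makes sense:
\[
t(\sbd{F},U)=\int\prod_{v\in V(F)}dx_v\prod_{e\in E(F)}dy_e\prod_{e=\{a,b\}\in E(F)}U(x_a,y_e)U(x_b,y_e).
\]
For each $e$, the variable $y_e$ appears in exactly two factors, so Fubini lets me integrate it out and introduces
\[
V(x_a,x_b):=\int_I U(x_a,y)U(x_b,y)\,dy=(U\circ U\T)(x_a,x_b).
\]

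Next I would record the two properties of $V$ that matter. It is symmetric, since swapping $x_a$ and $x_b$ merely relabels the integration. It lies in $\WW_1$, since $|V(x,x')|\le\int_I |U(x,y)|\,|U(x',y)|\,dy\le 1$ because $U\in\WW_1$. Substituting back,
\[
t(\sbd{F},U)=\int\prod_{v\in V(F)}dx_v\prod_{e=\{a,b\}\in E(F)}V(x_a,x_b)=t(F,V),
\]
where the last equality uses the bipartite definition of $t(F,\cdot)$ (the symmetry of $V$ makes the choice of orientation on each edge of $F$ irrelevant). The same calculation gives $t(\sbd{G},U)=t(G,V)$.

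Finally, since $V\in\WW_1$, the hypothesis $F\le G$ yields $t(F,V)\le t(G,V)$, hence $t(\sbd{F},U)\le t(\sbd{G},U)$, as required. There is no real obstacle here beyond bookkeeping; the only point needing a moment of care is verifying that $V$ lies in $\WW_1$ and is symmetric, so that it is a legitimate test kernel for the inequality $F\le G$ applied in its natural (bipartite, symmetric) form.
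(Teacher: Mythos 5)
Your proof is correct and is essentially the paper's argument: the paper's one-line proof also rests on the identity $t(\sbd{F},U)=t(F,U\circ U\T)$ and then applies $F\le G$ to the kernel $U\circ U\T$. You simply supply the routine verifications (integrating out the subdivision variables, symmetry, and the bound $\|U\circ U\T\|_\infty\le1$ so that the test kernel lies in $\WW_1$) that the paper leaves implicit.
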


\begin{proof}
For every $U\in\WW$,
\[
t(\sbd{F},U) = t(F,U\circ U\T) \le t(G,U\circ U\T) = t(\sbd{G},U).
\]
\end{proof}

\begin{lemma}\label{LEM:ATTACHED}
Let $F$ be a bigraph, let $S\subseteq V(F)$, and let $H_1,\dots,H_m$
be the connected components of $F\setminus S$. Assume that each node
in $S$ has neighbors in at most two of the $H_i$. Let $F_i$ denote
the graph consisting of $H_i$, its neighbors in $S$, and the edges
between $H_i$ and $S$. Let us label the nodes of $S$ in every $F_i$.
Then
\[
F^2\le \prod_{i=1}^m F_i^{\gpow2}.
\]
\end{lemma}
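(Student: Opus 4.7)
The plan is to apply the edge-weighting inequality of Lemma \ref{LEM:EDGE-WEIGHT} after collecting the integration over each component $H_i$ into a single factor that depends only on the neighboring vertices of $S$.

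Fix $U\in\WW_1$ and, for each $i$, let $T_i=S\cap V(F_i)$ be the set of $S$-vertices adjacent in $F$ to some node of $H_i$. Define
\[
g_i(x_{T_i})=\int_{I^{V(H_i)}}\prod_{e\in E(F_i)} U(x_{a(e)},x_{b(e)})\,dx_{V(H_i)}.
\]
Every edge of $F$ either lies inside $S$ or belongs to a unique $E(F_i)$, so integrating out the $V(H_i)$-variables first gives
\[
t(F,U)=\int_{I^S} h(x_S)\prod_{i=1}^m g_i(x_{T_i})\,dx_S,
\]
where $h(x_S)$ is the product of $U$-values over those edges of $F$ whose both endpoints lie in $S$. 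Since $U\in\WW_1$ we have $|h|\le 1$, so $|t(F,U)|\le \int_{I^S}\prod_i |g_i(x_{T_i})|\,dx_S$.

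Next I build an auxiliary loopless multigraph $G'$ on the vertex set $\{1,\dots,m\}$, enlarged by dummy vertices. For each $s\in S$ with $s\in T_i\cap T_j$ for distinct $i,j$, insert an edge labelled $s$ between vertices $i$ and $j$; for each $s$ lying in exactly one $T_i$, introduce a fresh dummy vertex $d_s$ and add the edge labelled $s$ between $i$ and $d_s$; any $s$ lying in no $T_i$ integrates out trivially and is discarded. Assign $|g_i|$ to vertex $i$ and the constant function $1$ to each dummy vertex. The hypothesis that each $s\in S$ has neighbors in at most two of the $H_i$ is precisely what makes $G'$ a (loopless) multigraph rather than a hypergraph, and by construction $|g_i|$ depends only on variables indexed by edges of $G'$ incident with $i$. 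Lemma \ref{LEM:EDGE-WEIGHT} then yields
\[
|t(F,U)|\le \prod_{i=1}^m\||g_i|\|_2\cdot\prod_{s}\|1\|_2=\prod_{i=1}^m\|g_i\|_2.
\]

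Squaring gives $t(F,U)^2\le\prod_i\|g_i\|_2^2$. Expanding $\|g_i\|_2^2$ produces two independent copies of $F_i$ sharing the $T_i$-variables, which is exactly the definition of $F_i^{\gpow2}$, so $\|g_i\|_2^2=t(F_i^{\gpow2},U)$. Combined with $t(F^2,U)=t(F,U)^2$ and $t\bigl(\prod_i F_i^{\gpow2},U\bigr)=\prod_i t(F_i^{\gpow2},U)$, this establishes $F^2\le\prod_i F_i^{\gpow2}$. The delicate points I expect are the dummy-vertex detour around the no-loops restriction of Lemma \ref{LEM:EDGE-WEIGHT}, and the absolute-value bound $|h|\le 1$ that absorbs the edges of $F$ inside $S$ (which do not appear anywhere on the right-hand side of the claimed inequality).
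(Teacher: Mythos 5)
Your proof is correct and follows essentially the same route as the paper: factor $t(F,U)$ over the $S$-variables into the product of the partial densities of the $F_i$ and the factor coming from edges inside $S$, bound the latter by $1$ in absolute value, and finish with the generalized Cauchy--Schwarz inequality. The only difference is that the paper cites Lemma \ref{LEM:C-S} at this point, whereas you unpack that citation into a direct application of Lemma \ref{LEM:EDGE-WEIGHT} via the dummy-vertex multigraph, which is exactly how Lemma \ref{LEM:C-S} is meant to follow from it.
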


\begin{proof}
Let $F_0$ denote the subgraph induced by $S$, and consider the nodes
of $F_0$ labeled $1,\dots,k$; we may assume that these nodes are
labeled the same way in every $F_i$. Then using that $|t_{x_1\dots
x_k}(F_0,U)|\le 1$, we get
\begin{align*}
|t(F,U)|&=\Bigl|\int_{I^k} \prod_{i=0}^m t_{x_1\dots
x_k}(F_i,U)\,dx_1\dots dx_k\Bigr|\\
&\le \int_{I^k} \prod_{i=1}^m |t_{x_1\dots x_k}(F_i,U)|\,dx_1\dots
dx_k.
\end{align*}
Lemma \ref{LEM:C-S} implies the assertion.
\end{proof}

We formulate some special cases.

\begin{corollary}\label{COR:INDEP1}
If $F$ contains two nonadjacent nodes of degree at least $2$, then
$F\le C_4$.
\end{corollary}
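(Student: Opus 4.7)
The plan is to apply Lemma~\ref{LEM:ATTACHED} with the smallest possible separator $S = V(F) \setminus \{u,v\}$, where $u,v$ are the two nonadjacent nodes of degree at least $2$. Because $u$ and $v$ are nonadjacent, $F \setminus S$ is edgeless on $\{u,v\}$, and so has exactly two connected components $H_1 = \{u\}$ and $H_2 = \{v\}$. The hypothesis of Lemma~\ref{LEM:ATTACHED} is then vacuous: there are only two components for nodes of $S$ to be adjacent to. The associated graph $F_1$ is the star centered at $u$ with leaves $N(u)$ (all labeled), and $F_2$ is the analogous star at $v$. Taking two copies of $F_1$, identifying the labeled leaves and unlabeling yields $F_1^{\gpow2} \cong K_{2,|N(u)|}$, and similarly $F_2^{\gpow2} \cong K_{2,|N(v)|}$. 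Lemma~\ref{LEM:ATTACHED} therefore gives
\[
t(F,U)^2 \;\le\; t(K_{2,|N(u)|},U)\,t(K_{2,|N(v)|},U).
\]

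What remains is to show that $t(K_{2,a},U) \le t(C_4,U)$ for every $a \ge 2$ and every $U \in \WW_1$. Written out, $t(K_{2,a},U) = \int ((U \circ U\T)(x_1,x_2))^{a}\,dx_1\,dx_2$ when the size-$2$ class lies in $A$, or the analogous expression with $U\T \circ U$ when it lies in $B$. Since $|U| \le 1$, the kernels $U \circ U\T$ and $U\T \circ U$ are bounded pointwise by $1$ in absolute value, and for every $|z| \le 1$ and $a \ge 2$ one has $z^{a} \le z^{2}$ (because $z^{2} \ge 0$ and $z^{a-2} \le 1$). Integrating, $t(K_{2,a},U) \le t(K_{2,2},U) = t(C_4,U)$. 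Substituting into the displayed inequality and using $t(C_4,U) \ge 0$ gives $|t(F,U)| \le t(C_4,U)$, and in particular $t(F,U) \le t(C_4,U)$, which is exactly $F \le C_4$.

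The main obstacle I anticipate is purely notational: one has to verify carefully that the labeling convention of Lemma~\ref{LEM:ATTACHED} really produces $K_{2,|N(u)|}$ after $\gpow$-squaring and unlabeling, and to keep track of which part of the bipartition contains the size-$2$ side of each $K_{2,a}$ when $u$ and $v$ lie on opposite sides of the bipartition. In that case one factor uses the kernel $U \circ U\T$ and the other $U\T \circ U$, but both satisfy the same pointwise bound and yield the same reduction to $t(C_4,U)$, so the argument is uniform across the two cases for where $u$ and $v$ sit.
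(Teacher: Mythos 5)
Your proof is correct and follows essentially the route the paper intends: the corollary is stated as an immediate special case of Lemma \ref{LEM:ATTACHED} (and is generalized by Corollary \ref{COR:INDEP}, whose bound $\prod_i K_{2,d_i}\le C_4^k$ rests on exactly the pointwise estimate $z^a\le z^2$ for $|z|\le1$, $a\ge2$, that you supply). Your write-up correctly fills in the details the paper leaves implicit, including the reduction $t(K_{2,a},U)\le t(C_4,U)$ for odd $a$, which Lemma \ref{LEM:MON2} alone would not cover.
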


More generally,

\begin{corollary}\label{COR:INDEP}
Let $v_1,\dots,v_k$ be independent nodes in $F$ with degrees
$d_1,\dots,d_k$ such that no node of $F$ is adjacent to more than $2$
of them. Then
\[
F^2\le\prod_{i=1}^k K_{2,d_i}\le C_4^{k},
\]
\end{corollary}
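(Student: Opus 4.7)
The plan is to derive the first inequality as a direct application of Lemma \ref{LEM:ATTACHED} and then to reduce each $K_{2,d_i}$ to $C_4$ by a pointwise estimate on the integrand.

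First I would apply Lemma \ref{LEM:ATTACHED} with $S=V(F)\setminus\{v_1,\dots,v_k\}$. Since the $v_i$ are independent, the connected components of $F\setminus S$ are the singletons $H_i=\{v_i\}$; the hypothesis that no node of $S$ has neighbors in more than two of the $H_i$ is exactly the stated condition that no node of $F$ is adjacent to more than two of the $v_i$ (the condition is vacuous on the $v_i$ themselves, by independence). The graph $F_i$ is then the star having $v_i$ as its (unlabeled) center and the $d_i$ neighbors of $v_i$ in $S$ as labeled leaves. Forming $F_i\gprod F_i$ (glue at the $d_i$ labels, then unlabel) yields the complete bigraph $K_{2,d_i}$, whose two-element class consists of the two unlabeled copies of the center and whose $d_i$-element class consists of the identified leaves. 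Lemma \ref{LEM:ATTACHED} then gives $F^2\le\prod_{i=1}^k K_{2,d_i}$.

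Second, for the bound $\prod_i K_{2,d_i}\le C_4^k$ (which, as in Corollary \ref{COR:INDEP1}, really uses $d_i\ge 2$), I would argue pointwise. Setting $g(x_1,x_2)=\int_I U(x_1,y)U(x_2,y)\,dy$, one has
\[
t(K_{2,d},U)=\int_{I^2} g(x_1,x_2)^d\,dx_1\,dx_2,\qquad t(C_4,U)=\int_{I^2} g(x_1,x_2)^2\,dx_1\,dx_2.
\]
For $U\in\WW_1$ the bound $|U|\le 1$ gives $|g(x_1,x_2)|\le 1$, and hence for $d\ge 2$ we have $g^d\le|g|^d\le g^2$ pointwise, so $K_{2,d}\le C_4$. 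Each $K_{2,d_i}$ is the $\gpow 2$ of the labeled star, so its density is nonnegative by \eqref{EQ:POS}, and $C_4\ge 0$ as well; hence the inequalities $0\le t(K_{2,d_i},U)\le t(C_4,U)$ multiply across $i$ to give $\prod_i t(K_{2,d_i},U)\le t(C_4,U)^k=t(C_4^k,U)$.

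There is no real obstacle beyond bookkeeping. The key checks are (i) that $F_i^{\gpow 2}=K_{2,d_i}$, which crucially requires the labels to sit on the leaves so that two disjoint centers survive the gluing, and (ii) the degree hypothesis $d_i\ge 2$ in the pointwise step, which mirrors the hypothesis already present in Corollary \ref{COR:INDEP1}.
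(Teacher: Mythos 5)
Your proof is correct and follows the route the paper clearly intends (the corollary is stated without proof as a special case of Lemma \ref{LEM:ATTACHED}): take $S=V(F)\setminus\{v_1,\dots,v_k\}$, identify each $F_i^{\gpow2}$ with $K_{2,d_i}$, and then bound $K_{2,d_i}\le C_4$. Your pointwise estimate $g^{d}\le g^{2}$ for $|g|\le1$, $d\ge2$ is a clean way to get that last step, and has the small advantage of covering odd $d_i$ as well, which Lemma \ref{LEM:MON2} alone (stated only for $K_{h,2k}$) would not directly give; your remark that $d_i\ge2$ is genuinely needed there is also accurate.
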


A {\it hanging path system} in a graph $F$ is a set
$\{P_1,\dots,P_m\}$ of openly disjoint paths such that the internal
nodes of each $P_i$ have degree $2$, and at most two of them start at
any node. The {\it value} of a hanging path system is the total
number of their internal nodes.

\begin{corollary}\label{COR:HANGING}
Let $F$ be a bigraph that contains a hanging path system with lengths
$r_1,\dots,r_m$. Then
\[
F^2 \le\prod_{i=1}^m C_{2r_i}.
\]
\end{corollary}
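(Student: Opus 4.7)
The plan is to derive this as a direct application of Lemma~\ref{LEM:ATTACHED}. Let $\{P_1,\dots,P_m\}$ be the given hanging path system in $F$, with $P_i$ of length $r_i$; I may as well assume each $r_i \ge 2$, since a path of length $1$ has no internal nodes and can be dropped from the system without affecting its value or the product on the right. Set
\[
S = V(F) \setminus \bigcup_{i=1}^m \text{int}(P_i),
\]
where $\text{int}(P_i)$ denotes the set of internal nodes of $P_i$.

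First I would identify the pieces needed to invoke Lemma~\ref{LEM:ATTACHED}. Because the paths are openly disjoint and every internal node of each $P_i$ has degree $2$ in $F$, the connected components of $F\setminus S$ are precisely the interiors $H_i = \text{int}(P_i)$, each of which is a path on $r_i-1$ nodes. A node $v\in S$ has a neighbor in $H_i$ if and only if $v$ is an endpoint of $P_i$, so the hanging path system condition ``at most two of the $P_j$ start at any node'' translates exactly into the hypothesis of Lemma~\ref{LEM:ATTACHED} that each $v\in S$ has neighbors in at most two of the $H_i$.

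Next I would identify the factors. The graph $F_i$ produced by Lemma~\ref{LEM:ATTACHED} — the component $H_i$ together with its two neighbors in $S$ (the endpoints of $P_i$) and the edges between them — is simply $P_i$ with its two endpoints labeled. Gluing two copies of this $2$-labeled path at both labeled endpoints and then unlabeling produces a cycle of length $2r_i$, so $F_i^{\gpow 2} = C_{2r_i}$. Substituting into the conclusion of Lemma~\ref{LEM:ATTACHED} gives
\[
F^2 \le \prod_{i=1}^m F_i^{\gpow 2} = \prod_{i=1}^m C_{2r_i},
\]
which is exactly the claimed inequality.

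I do not foresee a real obstacle here: the content of the proof is the dictionary between the hanging path system language and the hypotheses of Lemma~\ref{LEM:ATTACHED}, together with the small observation that the $\gpow 2$ of a path labeled at both endpoints is a cycle of twice the length.
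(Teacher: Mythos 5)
Your proof is correct and is exactly the intended derivation: the paper states Corollary~\ref{COR:HANGING} without proof as an immediate consequence of Lemma~\ref{LEM:ATTACHED}, and your dictionary (take $S$ to be the complement of the path interiors, use the degree-$2$ condition to see that each interior is a full component of $F\setminus S$ and that only the endpoints of $P_i$ can be its neighbors in $S$, and note that $(P''_{r_i+1})^{\gpow2}=C_{2r_i}$) is precisely what is needed. One small inaccuracy: dropping a path of length $1$ \emph{does} change the right-hand side, since it deletes a factor $C_2$, and because $t(C_2,U)\le 1$ for $U\in\WW_1$ the bound without that factor is weaker rather than equivalent; this degenerate case never arises in the paper's applications (which use lengths between $2$ and $r$), and if one insists on it, the edge of a length-$1$ path can simply be retained as an extra factor in the application of Lemma~\ref{LEM:C-S} (each of its endpoint variables still occurs in at most two factors), which restores the $C_2$ term.
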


Combining with Corollary \ref{COR:CYCLE}, we get the following bound,
which we will use:

\begin{corollary}\label{COR:HANG-BOUND}
Let $F$ be a simple graph that contains a hanging path system of
lengths between $2$ and $r$ and value $2r+a-2$, $a\ge0$. Then $F \le
C_{2r}C_4^{a/2}$.
\end{corollary}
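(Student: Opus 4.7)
The plan is to combine Corollary \ref{COR:HANGING} with cycle-density inequalities from Corollary \ref{COR:CYCLE}. First, Corollary \ref{COR:HANGING} applied to the given hanging path system yields
\[
F^2 \le \prod_{i=1}^m C_{2 r_i},
\]
so it suffices to prove the cycle-density bound
\[
\prod_{i=1}^m C_{2 r_i} \le C_{2 r}^2 \cdot C_4^a.
\]
Indeed, from these two inequalities $t(F,U)^2 \le t(C_{2r},U)^2\, t(C_4,U)^a = t(C_{2r}\,C_4^{a/2},U)^2$, and since both $t(C_{2r},U)$ and $t(C_4,U)$ are nonnegative by \eqref{EQ:POS}, taking nonnegative square roots yields $|t(F,U)|\le t(C_{2r}\,C_4^{a/2},U)$, i.e.\ $F\le C_{2r}\,C_4^{a/2}$.

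To prove the cycle-density bound, write $p_s = t(C_{2s},U)$ for $U\in\WW_1$. The numerical identity $\sum_i(r_i-1)=2(r-1)+a$ says that the total ``length-minus-one'' of the multiset $\{r_1,\dots,r_m\}$ exactly matches that of $\{r,r,2,\dots,2\}$ (with $a$ twos) on the right. I would use two ingredients supplied by Corollary \ref{COR:CYCLE}: the log-convexity $p_s^2 \le p_{s-1}p_{s+1}$ (Cauchy--Schwarz on even-cycle densities) and the sub-unit bound $p_s \le 1$ on $\WW_1$. A majorization/rearrangement argument on $\{r_i\}$ then gives $\prod_i p_{r_i}\le p_r^2\,p_2^a$: iteratively apply the replacement $(r_i,r_j)\mapsto(r_i-1,r_j+1)$ for $2<r_i\le r_j<r$, each of which is non-decreasing on $\prod p_{r_i}$ by log-convexity, until every $r_i\in\{2,r\}$; then bound the resulting $p_r^k p_2^{m-k}$ by $p_r^2\, p_2^a$ using the exchange inequality $p_2^{r-1}\le p_r$ (equivalently $C_4^{r-1}\le C_{2r}$, which comes from H\"older combined with $p_1\le 1$).

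The main obstacle is this rearrangement step. When $m=a+2$ the bound is essentially immediate after pushing the multiset to $\{2,r\}$, but in general the terminal extremal shape has $k\ne 2$ copies of $r$, where $k$ is determined by $k(r-2)=2(r-1)+a-m$, so one must invoke $p_2^{r-1}\le p_r$ (possibly iterated) to bridge between $p_r^k p_2^{m-k}$ and $p_r^2 p_2^a$. A further subtlety is that, because the $r_i$ are integers, the rearrangement may halt with a stray value in $[3,r-1]$; this residual entry should be absorbed into one of the $p_r$ or $p_2$ factors using the basic building block $p_a p_b\le p_{a+b-1}$, itself a direct consequence of log-convexity and $p_1\le 1$.
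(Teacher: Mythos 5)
Your overall route---Corollary~\ref{COR:HANGING} followed by an inequality between products of even-cycle densities---is exactly what the paper intends (its entire ``proof'' is the phrase ``combining with Corollary~\ref{COR:CYCLE}''), and your reduction to $\prod_i p_{r_i}\le p_r^2\,p_2^a$, the spreading step via log-convexity, and the final square-root step are all sound. The genuine gap is in the bridge from $p_r^k p_2^{m-k}$ to $p_r^2 p_2^a$. Conservation of value gives $k(r-1)+(m-k)=2(r-1)+a$, i.e.\ $(k-2)(r-1)=a-(m-k)$. Hence when $k>2$ (equivalently $m<a+2$) you must convert the surplus $p_r^{k-2}$ into $p_2^{(k-2)(r-1)}$, which requires $p_r\le p_2^{\,r-1}$---the \emph{reverse} of the inequality $p_2^{\,r-1}\le p_r$ of Corollary~\ref{COR:4CYCLE} that you invoke, and false already for constant $U\equiv c\in(0,1)$. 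Your exchange inequality only lets you trade $p_2$'s for $p_r$'s as an upper bound, so it handles $k\le 2$ but cannot handle $k>2$.

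This is not a repairable oversight in your argument, because the statement fails in that regime. Take $F=C_{12}$ split into three arcs of length $4$ meeting pairwise at three junction nodes: the arcs are openly disjoint, their internal nodes have degree $2$, and exactly two arcs end at each junction, so this is a hanging path system with lengths in $[2,4]$ and value $9=2\cdot 4+3-2$, giving $r=4$, $a=3$ and the purported conclusion $C_{12}\le C_8C_4^{3/2}$. Testing with $U\equiv c$ gives $c^{12}\le c^{14}$, false for $0<c<1$ (this is the edge-counting obstruction of Proposition~\ref{PROP:TRIV}: $12<8+6$). So the corollary needs the additional hypothesis $m\ge a+2$ (equivalently, that your terminal shape has $k\le 2$ copies of $r$); this is automatic when $a\le 1$, which is all that the applications in Lemma~\ref{LEM:MAIN} require, since each path contributes at most $r-1$ to the value and $2(r-1)<2r-1$ forces $m\ge 3$ there. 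Under that hypothesis your rearrangement terminates with $k\le 2$, only the correct direction $p_2^{\,r-1}\le p_r$ is needed, and the proof goes through.
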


\begin{corollary}\label{COR:ERASE}
Let $F$ be a graph and $S\subseteq V(F)$. Let $F_0$ be obtained by
deleting the edges within $S$, and labeling the nodes in $S$. Then
\[
F\le (F_0^{\gpow2})^{1/2}.
\]
\end{corollary}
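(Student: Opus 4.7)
The plan is to derive the inequality from a single application of the Cauchy--Schwarz inequality in the variables attached to the labeled set $S$. The key observation is that $t(F,U)$ decomposes cleanly once we condition on the tuple $\xi_S=(\xi_v)_{v\in S}$ of coordinates indexed by $S$: writing $E(F[S])$ for the set of edges of $F$ both of whose endpoints lie in $S$, and $t_{\xi_S}(F_0,U)$ for the partial density of the labeled graph $F_0$ with its labels frozen at $\xi_S$, we have
\[
t(F,U)=\int_{I^S}\Bigl(\prod_{uv\in E(F[S])}U(\xi_u,\xi_v)\Bigr)\,t_{\xi_S}(F_0,U)\,d\xi_S,
\]
simply because $F_0$ is obtained from $F$ by deleting exactly the edges in $E(F[S])$ and labeling the vertices of $S$.

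Applying Cauchy--Schwarz to the two factors in $\xi_S$ gives
\[
t(F,U)^2\le\Bigl(\int_{I^S}\prod_{uv\in E(F[S])}U(\xi_u,\xi_v)^2\,d\xi_S\Bigr)\cdot\int_{I^S}t_{\xi_S}(F_0,U)^2\,d\xi_S.
\]
By the definition of the labeled product (glue two copies of $F_0$ along the common labels in $S$, then unlabel), the second integral is exactly $t(F_0^{\gpow2},U)$. The first integral is at most $1$, since $U\in\WW_1$ forces the pointwise bound $U^2\le 1$.

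Combining these bounds yields $t(F,U)^2\le t(F_0^{\gpow2},U)$, and the right-hand side is non-negative by \eqref{EQ:POS}, so we may extract a square root to conclude $t(F,U)\le t(F_0^{\gpow2},U)^{1/2}$ for every $U\in\WW_1$, which is the asserted ordering $F\le(F_0^{\gpow2})^{1/2}$. The argument is essentially mechanical; there is no genuine obstacle. The only things to get right are to match the Cauchy--Schwarz square with the labeled-product definition of $F_0^{\gpow2}$, and to observe that the contribution of the edges inside $S$ that were deleted in passing to $F_0$ is precisely what the uniform bound $\|U\|_\infty\le1$ absorbs.
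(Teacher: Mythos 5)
Your proof is correct and follows essentially the intended route: the paper leaves this corollary without an explicit proof, but the evident argument is exactly your decomposition over the $S$-variables, bounding the contribution of the edges inside $S$ by $\|U\|_\infty\le 1$ and applying Cauchy--Schwarz to identify $\int_{I^S}t_{\xi_S}(F_0,U)^2\,d\xi_S$ with $t(F_0^{\gpow2},U)$, just as in the proof of Lemma~\ref{LEM:ATTACHED}. The final appeal to \eqref{EQ:POS} to extract the square root is also handled correctly.
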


\subsection{Special graphs and examples}

\begin{lemma}\label{LEM:MON}
Let $U\in\WW_1$. Then the sequence $(t(C_{2k},U):~k=1,2,\dots)$ is
nonnegative, logconvex, and monotone decreasing.
\end{lemma}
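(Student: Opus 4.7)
The plan is to read all three properties off the spectral picture of the Hilbert--Schmidt kernel operator $T_U:L^2(I)\to L^2(I)$ associated with $U$, defined by $(T_Uf)(x)=\int U(x,y)f(y)\,dy$.

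First I would establish the identity
\[
t(C_{2k},U)=\tr\bigl((T_UT_U^*)^k\bigr)=\sum_i\sigma_i^{2k},
\]
where $\sigma_1\ge\sigma_2\ge\cdots\ge0$ are the singular values of $T_U$. Since $C_{2k}$ is bipartite, one integrates out the variables in one color class first: each adjacent pair $U(x_i,y_j)U(x_{i+1},y_j)$ contracts to the symmetric positive-semidefinite kernel $(U\circ U^\T)(x_i,x_{i+1})$, and the cyclic identifications of the remaining $k$ variables produce the trace of the $k$-th iterate of the trace-class operator $T_UT_U^*$.

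Next, since $U\in\WW_1$ gives $|U|\le1$ a.e., hence $\|U\|_2\le1$, one has $\sigma_i\le\|T_U\|_{\mathrm{op}}\le\|T_U\|_{\mathrm{HS}}=\|U\|_2\le1$. With $\sigma_i\in[0,1]$ all three claims follow at once: nonnegativity from $\sum_i\sigma_i^{2k}\ge0$; monotonicity from the term-by-term inequality $\sigma_i^{2(k+1)}\le\sigma_i^{2k}$; and logconvexity from Cauchy--Schwarz in $\ell^2$, namely $\bigl(\sum_i\sigma_i^{k-1}\sigma_i^{k+1}\bigr)^2\le\bigl(\sum_i\sigma_i^{2(k-1)}\bigr)\bigl(\sum_i\sigma_i^{2(k+1)}\bigr)$.

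Alternatively, the labelled-graph machinery of Section~2 gives purely combinatorial proofs of two of the three properties: writing $C_{2k}=(P''_{k+1})^{\gpow2}$ yields $C_{2k}\ge0$ via \eqref{EQ:POS}, and writing $C_{2k}=P''_k\gprod P''_{k+2}$ (gluing a path with $k-1$ edges to one with $k+1$ edges at their labelled endpoints) yields $t(C_{2k},U)^2\le t(C_{2k-2},U)\,t(C_{2k+2},U)$ via \eqref{EQ:C-H} (valid for $k\ge2$). Only the monotonicity assertion genuinely uses the hypothesis $U\in\WW_1$, through the operator-norm bound $\sigma_1\le1$, so the main technical point is establishing the spectral identity of the second paragraph rigorously on $L^2(I)$; once the trace-of-$k$-th-power interpretation is in place the rest is routine.
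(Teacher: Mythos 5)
Your proposal is correct, and its main line is a genuinely different (spectral) route from the paper's. You reduce everything to the identity $t(C_{2k},U)=\tr\bigl((T_UT_U^*)^k\bigr)=\sum_i\sigma_i^{2k}$ --- the Schatten-norm interpretation the paper itself records in its subsection on norms --- and then read off all three properties from the fact that the singular values lie in $[0,1]$, the hypothesis $U\in\WW_1$ entering only through $\sigma_i\le\|U\|_2\le1$. The paper instead stays entirely at the level of graph densities: it writes $t(C_{a+b},U)=\int_{I^2}t_{xy}(P''_a,U)\,t_{xy}(P''_b,U)$, obtains nonnegativity by taking $a=b$ and logconvexity by Cauchy--Schwarz --- which is exactly your ``alternative'' argument via $C_{2k}=(P''_{k+1})^{\gpow2}$ and $C_{2k}=P''_k\gprod P''_{k+2}$ together with \eqref{EQ:POS} and \eqref{EQ:C-H} --- and then gets monotonicity with no spectral input at all: a nonnegative logconvex sequence that stays bounded by $1$ (here $|t(C_{2k},U)|\le1$ because $|U|\le1$) must be nonincreasing, since its consecutive ratios are nondecreasing and a ratio exceeding $1$ would force geometric growth. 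So the only step where the two proofs genuinely diverge is monotonicity: your term-by-term bound $\sigma_i^{2k+2}\le\sigma_i^{2k}$ is more direct and self-contained once the trace identity is in place, but it obliges you to justify that identity rigorously for the Hilbert--Schmidt operator $T_U$ (which you correctly flag as the one technical point); the paper's deduction is softer and needs no operator theory. Both arguments are complete and correct.
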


With the notation introduced above, we have $C_2\ge C_4\ge C_6\ge
\dots\ge 0$.

\begin{proof}
We have
\[
t(C_{a+b},U) = \int_{I^2} t_{xy}(P''_a,U)t_{xy}(P''_b,U).
\]
Taking $a=b=k$, nonnegativity follows. Applying Cauchy--Schwarz,
\[
t(C_{a+b},U)\le t(C_{2a},U)^{1/2}t(C_{2b},U)^{1/2}.
\]
This implies logconvexity. Since the sequence remains bounded by $1$,
it follows that the sequence is monotone decreasing.
\end{proof}

\begin{lemma}\label{LEM:GCS}
Let $r_1,r_2,\dots,r_k$ be positive integers, and $r=r_1+\dots+r_k$.
Then
\[
C_r^2\le C_{2r_1}\dots C_{2r_k}.
\]
\end{lemma}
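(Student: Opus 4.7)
The plan is to regard $C_r$ as a cyclic concatenation of $k$ arcs, the $i$-th being a path of $r_i$ edges, meeting at $k$ junction vertices $v_1,\dots,v_k$, and then to apply the generalized Cauchy--Schwarz of Lemma~\ref{LEM:C-S}. For $U\in\WW_1$, let $K_i(x,y)$ denote the two-variable kernel obtained by fixing the endpoints of the $i$-th arc at $x$ and $y$ and integrating out its $r_i-1$ internal nodes; equivalently, $K_i$ is the kernel of the $r_i$-fold composition $U\circ\cdots\circ U$. Integrating out the internal nodes of all $k$ arcs yields the representation
\[
t(C_r,U) \;=\; \int_{I^k} \prod_{i=1}^k K_i(x_i,x_{i+1})\, dx_1\cdots dx_k,
\]
with indices taken cyclically modulo $k$ (so $x_{k+1}=x_1$).

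Next, each variable $x_i$ appears in precisely the two factors $K_{i-1}$ and $K_i$, so the hypothesis of Lemma~\ref{LEM:C-S} is satisfied and
\[
t(C_r,U) \;\le\; \prod_{i=1}^k \|K_i\|_2.
\]
The last ingredient is the ``doubling'' identity
\[
\|K_i\|_2^2 \;=\; \int_{I^2} K_i(x,y)^2\, dx\, dy \;=\; t(C_{2r_i},U),
\]
which follows because gluing two copies of a length-$r_i$ path at both of their labeled endpoints assembles the cycle $C_{2r_i}$ (with $2r_i$ edges). Squaring the preceding bound then yields $t(C_r,U)^2 \le \prod_{i=1}^k t(C_{2r_i},U)$, which is the desired inequality.

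The only real care is in the combinatorial bookkeeping: one must ensure that every junction variable lies in exactly two of the kernels $K_i$, so that the hypothesis ``each variable appears in at most two of the functions'' of Lemma~\ref{LEM:C-S} is cleanly met. This is automatic for $k\ge 2$, which is the intended regime (for $k=1$ the argument degenerates, and one needs a separate, easier observation since the single junction variable would sit diagonally in the same kernel). Beyond this setup, the argument is a direct and essentially mechanical chaining of Lemma~\ref{LEM:C-S} with the doubling identity for cycles, so I do not expect any substantive obstacle.
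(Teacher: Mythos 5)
Your argument is correct for $k\ge2$ and is essentially the intended one: the paper states Lemma~\ref{LEM:GCS} without an explicit proof, but it is exactly Corollary~\ref{COR:HANGING} specialized to the cycle decomposed into its $k$ arcs --- i.e.\ Lemma~\ref{LEM:C-S} applied to the arc kernels $K_i$, which is precisely what you do --- and for $k=2$ it is the Cauchy--Schwarz step already written out in the proof of Lemma~\ref{LEM:MON}. Two small points. First, before squaring you need $|t(C_r,U)|\le\prod_i\|K_i\|_2$ rather than the one-sided bound that Lemma~\ref{LEM:C-S} literally gives; this follows by applying the lemma with $f_1$ replaced by $-f_1$, but it should be said, since $a\le b$ with $b\ge0$ does not yield $a^2\le b^2$. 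Second, your aside about $k=1$ is too optimistic: the inequality $t(C_r,U)^2\le t(C_{2r},U)$ is false in general. For instance, with $U=\one_{x,y\le1/2}+\one_{x,y>1/2}$ one gets $t(C_2,U)^2=1/4>1/8=t(C_4,U)$; in spectral terms, for even $r$ one has $\bigl(\sum_i\lambda_i^r\bigr)^2\ge\sum_i\lambda_i^{2r}$, with equality only in the rank-one case. So the lemma must be read with $k\ge2$ (the only case the paper ever uses, e.g.\ in Corollary~\ref{COR:CYCLE0}); there is no ``easier separate observation'' available for $k=1$.
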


\begin{corollary}\label{COR:CYCLE0}
\[
C_{2k+2}\le C_{2k}C_4^{1/2}.
\]
\end{corollary}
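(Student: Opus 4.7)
The plan is to read the desired inequality $C_{2k+2}\le C_{2k}C_4^{1/2}$ as the pointwise statement $t(C_{2k+2},U)\le t(C_{2k},U)\,t(C_4,U)^{1/2}$ for every $U\in\WW_1$, and to deduce it from Lemma \ref{LEM:GCS} by a single clever choice of partition followed by a square root.

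Concretely, I would apply Lemma \ref{LEM:GCS} with $k=3$ summands and $r_1=r_2=k$, $r_3=2$, so that $r_1+r_2+r_3=2k+2$. The lemma then yields
\[
C_{2k+2}^2\le C_{2k}\cdot C_{2k}\cdot C_4 \;=\; C_{2k}^{\,2}\,C_4
\]
as an inequality of the functionals $t(\cdot,U)$ for $U\in\WW_1$. To pass from this squared form to the stated inequality, I would invoke Lemma \ref{LEM:MON}, which guarantees $t(C_{2k+2},U),\,t(C_{2k},U),\,t(C_4,U)\ge 0$ for all $U\in\WW_1$. Nonnegativity makes the square root unambiguous and monotone, and gives $t(C_{2k+2},U)\le t(C_{2k},U)\,t(C_4,U)^{1/2}$, which is the corollary.

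There is essentially no obstacle beyond choosing the right decomposition; the one mild conceptual point is simply to notice that expressions like $C_4^{1/2}$ are interpreted as scalar operations on $t$-values rather than as honest graph operations, so that the extraction of a square root is legitimate precisely because the relevant $t$-values are nonnegative. Once that interpretation is adopted, the whole argument is just one application of Lemma \ref{LEM:GCS} together with one application of Lemma \ref{LEM:MON}.
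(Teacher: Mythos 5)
Your proof is correct and is exactly the intended derivation: the paper states this corollary without proof immediately after Lemma \ref{LEM:GCS}, and the decomposition $2k+2=k+k+2$ giving $C_{2k+2}^2\le C_{2k}^2C_4$, followed by a square root justified by the nonnegativity from Lemma \ref{LEM:MON}, is the natural (and evidently intended) argument.
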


\begin{corollary}\label{COR:CYCLE}
If $1\le r_1,\dots,r_n\le r$ and $\sum_i (r_i-1) = k(r-1)$, then
\[
\prod_{i=1}^k C_{2r_i} \le C_{2r}^k.
\]
\end{corollary}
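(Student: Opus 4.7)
The plan is to obtain Corollary \ref{COR:CYCLE} as a discrete Jensen inequality, fed by the logconvexity established in Lemma \ref{LEM:MON} together with the elementary bound $t(C_2,U)\le 1$. Fix $U\in\WW_1$ and write $a_m:=t(C_{2m},U)\ge 0$; Lemma \ref{LEM:MON} gives $a_m^2\le a_{m-1}a_{m+1}$ for $m\ge 2$. The trivial case $U\equiv 0$ makes both sides of the desired inequality zero, and I would dispose of it first; otherwise the singular-value representation $a_m=\sum_j\sigma_j^{2m}$ that is implicit in the proof of Lemma \ref{LEM:MON} forces $a_m>0$ for every $m$, so one can pass to $b_m:=\log a_m$, which is midpoint-convex on $\Z_{\ge 1}$ and hence convex on any integer interval. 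The degenerate case $r=1$ forces every $r_i=1$ and is trivial, so from now on I assume $r\ge 2$.

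The central step is to expand each integer $r_i\in[1,r]$ as the convex combination
\[
r_i=\alpha_i\cdot 1+\beta_i\cdot r,\qquad \beta_i=\frac{r_i-1}{r-1},\quad \alpha_i=1-\beta_i\ge 0.
\]
Convexity of $b$ gives $b_{r_i}\le \alpha_i b_1+\beta_i b_r$. Summing over the indices and plugging in the hypothesis $\sum_i(r_i-1)=k(r-1)$, one finds $\sum_i\beta_i=k$ and $\sum_i\alpha_i=n-k\ge 0$ (the nonnegativity follows from $\sum(r_i-1)\le n(r-1)$), so
\[
\sum_{i}b_{r_i}\le (n-k)\,b_1+k\,b_r.
\]

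To finish I would absorb the $b_1$ term. Since $U\in\WW_1$ satisfies $\|U\|_\infty\le 1$, one has $a_1=\|U\|_2^2\le 1$ and hence $b_1\le 0$, so the nonpositive quantity $(n-k)b_1$ may simply be dropped. Exponentiating yields $\prod_i a_{r_i}\le a_r^k$, which is the claim. I do not expect a real obstacle: the combinatorial substance lives entirely in Lemma \ref{LEM:MON}, and the hypothesis $\sum(r_i-1)=k(r-1)$ is calibrated precisely so that the barycentric weights telescope to $n-k$ and $k$. The only mildly delicate move is recognizing that one should use logconvexity to set up Jensen and the separate elementary bound $t(C_2,U)\le 1$ to dispose of the leftover $b_1$ contribution at the very end; trying instead to iterate Corollary \ref{COR:CYCLE0} gives the wrong direction of inequality on $a_{r_i}$, which is why the logconvexity half of Lemma \ref{LEM:MON} is the essential input.
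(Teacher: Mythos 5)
Your argument is correct: the paper states this corollary without proof, and the interpolation inequality $t(C_{2j},U)\le t(C_2,U)^{(r-j)/(r-1)}\,t(C_{2r},U)^{(j-1)/(r-1)}$ extracted from the logconvexity in Lemma \ref{LEM:MON}, combined with $t(C_2,U)=\|U\|_2^2\le 1$ and the weight identity $\sum_i (r_i-1)/(r-1)=k$, is exactly the intended derivation (you also correctly read the conclusion as $\prod_{i=1}^n$, the upper limit $k$ in the displayed product being a typo). Your spectral justification that $a_m>0$ unless $U=0$ a.e.\ is sound, though one could dispense with it by noting that if $t(C_{2r},U)=0$ with $r\ge 2$ then logconvexity forces $t(C_{2m},U)=0$ for all $2\le m\le r$, so both sides vanish unless all $r_i=1$, in which case $k=0$.
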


\begin{corollary}\label{COR:4CYCLE}
For all $k\ge 2$,
\[
C_4^{k-1}\le C_{2k}\le C_4^{k/2}.
\]
\end{corollary}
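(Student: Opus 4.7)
The plan is to obtain each half of Corollary \ref{COR:4CYCLE} as a direct specialization of a result already proved in the paper. For the upper bound $C_{2k}\le C_4^{k/2}$, I would invoke Lemma \ref{LEM:GCS} with $r=2k$ and the partition $r_1=\cdots=r_k=2$. The hypothesis $r=\sum_i r_i$ holds, and the conclusion reads
\[
C_{2k}^2 \;\le\; C_{2r_1}\cdots C_{2r_k} \;=\; C_4^k.
\]
Since $t(C_{2j},U)\ge 0$ for every $j$ and every $U\in\WW_1$ by Lemma \ref{LEM:MON}, both sides of this inequality are nonnegative on $\WW_1$, so taking square roots preserves the inequality in the $\le$ order on graphs and gives $C_{2k}\le C_4^{k/2}$.

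For the lower bound $C_4^{k-1}\le C_{2k}$, I would apply Corollary \ref{COR:CYCLE} in the slightly degenerate case where the right-hand side has exponent one. Specifically, take $n=k-1$ parameters all equal to $2$, outer cycle-length $r=k$, and outer exponent equal to $1$. The sizes satisfy $1\le r_i=2\le r=k$ (using $k\ge 2$), and the balance condition reads
\[
\sum_{i=1}^{n}(r_i-1) \;=\; k-1 \;=\; 1\cdot (r-1),
\]
so Corollary \ref{COR:CYCLE} yields $\prod_{i=1}^{k-1}C_4\le C_{2k}^{1}$, which is exactly $C_4^{k-1}\le C_{2k}$.

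I do not anticipate any real obstacle: both halves are clean specializations with the correct choice of parameters. The only mildly subtle point is recognizing that the lower bound is the case of Corollary \ref{COR:CYCLE} in which the right-hand side is a single cycle (exponent one), which is easy to overlook because the corollary is usually read as comparing a product of small cycles to a power of a large cycle.
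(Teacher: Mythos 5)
Your proof is correct and matches the intended derivation: the corollary is stated without proof immediately after Lemma \ref{LEM:GCS}, Corollary \ref{COR:CYCLE0} and Corollary \ref{COR:CYCLE}, and your two specializations (all $r_i=2$ in Lemma \ref{LEM:GCS} for the upper bound, and Corollary \ref{COR:CYCLE} with outer exponent $1$, $n=k-1$ and all $r_i=2$ for the lower bound) are exactly the natural ones, with the nonnegativity of $t(C_{2j},U)$ from Lemma \ref{LEM:MON} justifying the square root. The only point worth flagging is that the product in Corollary \ref{COR:CYCLE} must be read as running over all $n$ indices (the printed upper limit $k$ is evidently a typo), which is the reading you adopt and the one consistent with its use elsewhere in the paper.
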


We can get similar bounds for paths, of which we only state two,
which will be needed. Recall that $P_n$ denotes the path with $n$
nodes and $n-1$ edges.

\begin{lemma}\label{LEM:PATHS}
For all $a,b\ge 1$, we have

\smallskip

{\rm (a)} $P_{a+b+1}\le P_{2a+1}^{1/2}P_{2b+1}^{1/2}$;

\smallskip

{\rm (b)} $P_{2a+b+1}\le P_{2a+1}C_{4b}^{1/4}$.
\end{lemma}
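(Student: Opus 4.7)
Both parts should follow from the Cauchy--Schwarz inequality \eqref{EQ:C-H} applied to well-chosen labelings of the path. The core identifications are $(P'_n)^{\gpow 2} = P_{2n-1}$ (two copies of $P_n$ glued at a single labeled endpoint) and $(P''_n)^{\gpow 2} = C_{2(n-1)}$ (two parallel copies of $P_n$ joined at both labeled endpoints), both immediate from the definition of $\gpow$.

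For (a), the plan is to label $P_{a+b+1}$ at the node lying at distance $a$ from one endpoint, which exhibits $P_{a+b+1} = P'_{a+1} \gprod P'_{b+1}$. A single application of \eqref{EQ:C-H} then yields
\[
t(P_{a+b+1},U)^2 \le t((P'_{a+1})^{\gpow 2},U)\, t((P'_{b+1})^{\gpow 2},U) = t(P_{2a+1},U)\, t(P_{2b+1},U),
\]
and taking square roots (the right-hand side is nonnegative by \eqref{EQ:POS}) gives (a).

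For (b), a single Cauchy--Schwarz is not enough, so I would iterate. First, the same labeling trick applied to $P_{2a+b+1}$ (split at the node at distance $a$) writes $P_{2a+b+1} = P'_{a+1} \gprod P'_{a+b+1}$, which gives
\[
t(P_{2a+b+1},U)^2 \le t(P_{2a+1},U)\, t(P_{2a+2b+1},U).
\]
Second, I would bound the longer path $P_{2a+2b+1}$ by labeling its \emph{two} nodes at distances $a$ and $a+2b$, decomposing it as $P'_{a+1} \gprod P''_{2b+1} \gprod P'_{a+1}$. Setting $f(x)=t_x(P'_{a+1},U)$ and $h(x,y)=t_{xy}(P''_{2b+1},U)$, one has $t(P_{2a+2b+1},U) = \int_I\int_I f(x)h(x,y)f(y)\,dx\,dy$, and Cauchy--Schwarz in $L^2(I^2)$ applied to the pair $(f(x)f(y),\,h(x,y))$ yields
\[
t(P_{2a+2b+1},U)^2 \le \Bigl(\int_I f(x)^2\,dx\Bigr)^2 t(C_{4b},U) = t(P_{2a+1},U)^2\, t(C_{4b},U).
\]
Multiplying the two bounds produces $t(P_{2a+b+1},U)^4 \le t(P_{2a+1},U)^4\, t(C_{4b},U)$, which is (b).

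The main subtlety is the need for this two-step iteration in (b): a direct one-step decomposition $P_{2a+b+1} = P'_{a+1} \gprod P''_{b+1} \gprod P'_{a+1}$ would only produce a $t(C_{2b},U)^{1/2}$ factor, and $t(C_{2b},U)^{1/2}$ is in general not dominated by $t(C_{4b},U)^{1/4}$. Rerouting through $P_{2a+2b+1}$, whose two-label decomposition contains the middle segment $P''_{2b+1}$ (with $(P''_{2b+1})^{\gpow 2} = C_{4b}$) instead of $P''_{b+1}$, is precisely what furnishes the cycle of length $4b$ demanded by the statement.
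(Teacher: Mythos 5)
Your proof is correct and follows essentially the same route as the paper: part (a) is the identical one-step Cauchy--Schwarz, and part (b) uses the same detour through $P_{2a+2b+1}$, with your hand-executed Cauchy--Schwarz on the pair $(f(x)f(y),h(x,y))$ being exactly the instance of Lemma \ref{LEM:ATTACHED} (cutting $P_{2a+2b+1}$ into $P_{a+1}$, $P''_{2b+1}$, $P_{a+1}$) that the paper invokes. One small quibble with your closing remark: the one-step decomposition is not actually a dead end, since Lemma \ref{LEM:GCS} with $k=1$ gives $C_{2b}^2\le C_{4b}$ and hence $t(C_{2b},U)^{1/2}\le t(C_{4b},U)^{1/4}$; but this does not affect the validity of your argument.
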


\begin{proof}
Since $P_{a+b+1}=P'_{a+1}\gprod P_{b+1}$, the first inequality
follows by \eqref{EQ:C-H-2}. To get the second, we use the first to
get
\[
P_{2a+b+1}\le P_{2a+1}^{1/2}P_{2a+2b+1}^{1/2}.
\]
Cut $P_{2a+2b+1}$ into pieces $P_{a+1}$, $P_{2b+1}$ and $P_{a+1}$,
and apply Lemma \ref{LEM:ATTACHED}; we get
\[
P_{2a+2b+1}\le P_{2a+1}C_{4b}^{1/2},
\]
and hence
\[
P_{2a+b+1}\le
P_{2a+1}^{1/2}\bigl(P_{2a+1}C_{4b}^{1/2}\bigr)^{1/2}=P_{2a+1}C_{4b}^{1/4}.
\]
\end{proof}

\begin{lemma}\label{LEM:MON2}
Let $U\in\WW_1$. Then for every $h\ge 1$, the sequence
$(t(K_{h,2k},U):~k=1,2,\dots)$ is nonnegative, logconcave and
monotone decreasing.
\end{lemma}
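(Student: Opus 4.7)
The plan is to reduce $t(K_{h,2k}, U)$ to a moment of a single auxiliary function built from $U$. Because in $K_{h,2k}$ every vertex of the $2k$-side has the same neighborhood---namely all of the $h$-side---the integrations over the $2k$ outside variables decouple, yielding
\[
t(K_{h,2k}, U) = \int_{I^h} g(\xb)^{2k}\, d\xb, \qquad \text{where} \quad g(\xb) = \int_I \prod_{i=1}^h U(x_i, y)\, dy.
\]
The three claims then become statements about the moment sequence $a_k := \int_{I^h} g(\xb)^{2k}\, d\xb$ of the real-valued function $g$ on $I^h$.

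From this representation each property is quick. Nonnegativity is immediate from $g^{2k} \ge 0$. For monotonicity, $U \in \WW_1$ gives $|g(\xb)| \le \int_I \prod_i |U(x_i,y)|\,dy \le 1$, so $g^{2k+2} \le g^{2k}$ pointwise and $a_{k+1} \le a_k$.

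For the third property I expect the statement to contain a typo---\emph{logconvex} rather than \emph{logconcave}, matching Lemma~\ref{LEM:MON} for even cycles. Writing $g^{2k} = g^{k-1}\cdot g^{k+1}$ and viewing this as an inner product in $L^2(I^h)$, Cauchy--Schwarz gives
\[
a_k \;=\; \int g^{k-1}\, g^{k+1}\, d\xb \;\le\; \Bigl(\int g^{2(k-1)}\,d\xb\Bigr)^{1/2}\Bigl(\int g^{2(k+1)}\,d\xb\Bigr)^{1/2} \;=\; \sqrt{a_{k-1}\, a_{k+1}},
\]
equivalently $a_k^2 \le a_{k-1}\, a_{k+1}$, i.e.\ log-\emph{convexity}. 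The opposite sign is genuinely false: taking $g$ equal to $1$ on half of $I^h$ and to $\tfrac12$ on the other half gives $a_1 a_3 = 325/1024 > 289/1024 = a_2^2$; more fundamentally, Lyapunov's inequality forces $p \mapsto \log\int |g|^p$ to be convex, so moments $\int g^{2k}$ are always log-convex in $k$.

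I do not anticipate a real obstacle. All the content lies in the factorization step; afterwards, Cauchy--Schwarz is applied in exactly the same spirit as in Lemma~\ref{LEM:MON} for even cycles, and the only small observation is that the factorization holds uniformly in $h$ because the ``common neighborhood'' structure of $K_{h,2k}$ does not depend on $h$.
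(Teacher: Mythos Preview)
Your argument is correct and is essentially the paper's own proof made explicit: the paper writes
\[
t(K_{h,a+b},U)=\int_{I^h} t_{x_1\dots x_h}(K'_{h,a},U)\,t_{x_1\dots x_h}(K'_{h,b},U)\,dx_1\dots dx_h
\]
and says ``similar'' to the cycle case; since $t_{x_1\dots x_h}(K'_{h,a},U)=g(\xb)^a$ with your $g$, this is exactly your moment identity, and nonnegativity, Cauchy--Schwarz, and monotonicity follow the same way. You are also right that ``logconcave'' is a typo for ``logconvex'' (the paper's proof, by analogy with Lemma~\ref{LEM:MON}, yields $t(K_{h,a+b},U)\le t(K_{h,2a},U)^{1/2}t(K_{h,2b},U)^{1/2}$, i.e.\ logconvexity).
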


\begin{proof}
The proof is similar, based on the equation
\[
t(K_{h,a+b},U) = \int_{I^h} t_{x_1\dots x_h}(K'_{h,a},U)t_{x_1\dots
x_h}(K'_{h,b},U)\,dx_1\dots dx_h.
\]
\end{proof}

For complete bipartite graphs, however, we don't have a bound similar
to Corollary \ref{COR:4CYCLE}, at least as long as we restrict
ourselves to simple graphs (see Example \ref{EXA:KNN}). But we do
have the following inequality.

\begin{lemma}\label{LEM:KNN}
For all $n\ge 3$, we have
\[
K_{n,n}\le K_{2,n} C_2^{1/2}.
\]
\end{lemma}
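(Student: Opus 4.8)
The plan is to realize $K_{n,n}$ as a product of two labeled pieces so that Cauchy--Schwarz (equation \eqref{EQ:C-H-2}) gives a clean upper bound, and then to reduce the resulting pieces to the stated graphs $K_{2,n}$ and $C_2$. Write $K_{n,n}$ with bipartition classes $A=\{a_1,\dots,a_n\}$ and $B=\{b_1,\dots,b_n\}$. First I would single out two vertices, say $a_1,a_2\in A$, and label them. Deleting the remaining vertices $a_3,\dots,a_n$ of $A$ together with all edges to them leaves the $2$-labeled graph $K'_{2,n}$ (the edges from $a_1,a_2$ to all of $B$), while each of $a_3,\dots,a_n$ together with its $n$ edges to $B$ is a copy of the star $K_{1,n}$. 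The idea is that in the integral expression for $t(K_{n,n},U)$, after fixing the values $x_1,x_2$ at $a_1,a_2$, the contribution of each $a_j$ ($j\ge 3$) is $\prod_{i=1}^n$-type but here I instead want to keep the $B$-vertices integrated, so the cleaner route is via Corollary \ref{COR:ERASE} or a direct product decomposition.

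More precisely, I would use the following decomposition. Label $b_1\in B$; this partitions $V(K_{n,n})\setminus\{b_1\}$ differently, so let me instead proceed as in Lemma \ref{LEM:PATHS}: write $K_{n,n}$ as the unlabeled product of two $1$-labeled graphs. Label one vertex $a_1\in A$. Then $K_{n,n} = K'_{1,n} \gprod H$ where $K'_{1,n}$ is the star from $a_1$ to all of $B$ with $a_1$ labeled, and $H=K'_{n-1,n}$ is the complete bipartite graph between $A\setminus\{a_1\}$ and $B$ with \emph{no} vertex labeled --- this does not work because the $B$-vertices must be shared. The correct statement is $K_{n,n} = K'_{1,n} \gpow \cdots$ is not a product over one label. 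So the honest approach: label \emph{all} of $B$, obtaining the $n$-labeled graph $K''_{n,n}$ in which $K_{n,n} = (K'_{1,n})^{\gpow n}$ is the $n$-fold labeled product of the single star $K'_{1,n}$ (star from one $A$-vertex to all $n$ labeled $B$-vertices) with itself; but that only recovers $K_{n,n}$, it does not bound it. Instead apply Cauchy--Schwarz in the labeled product $K_{n,n} = S_1 \gprod S_2$ where $S_1$ carries $a_1,a_2$ and their edges (this is $K'_{2,n}$ after labeling $B$... ) --- the clean version is: group $A$ as $\{a_1,a_2\}$ versus $\{a_3,\dots,a_n\}$, label $B$, so $K''_{n,n} = K''_{2,n}\cdot K''_{n-2,n}$, hence by \eqref{EQ:C-H-2}, $(K_{n,n})^2 \le (K''_{2,n})^{\gpow 2}\gprod (K''_{n-2,n})^{\gpow 2}$ with $B$ relabeled, i.e. $K_{n,n} \le (K_{2,2\cdot n}\text{-like})^{1/2}\cdots$. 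This is getting heavy; the key obstacle is exactly bookkeeping which vertices stay labeled after squaring.

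The cleanest path, which I would commit to: apply Corollary \ref{COR:ERASE} with $F=K_{n,n}$ and $S=A\setminus\{a_1\}$, so $a_2,\dots,a_n$ become labeled and all edges among $S$ (there are none, $A$ is independent) are deleted --- this gives nothing. So instead take $S = \{b_2,\dots,b_n\}\cup\{a_2,\dots,a_n\}$? Then $F_0$ still has all the cross edges. The actual mechanism must be Lemma \ref{LEM:ATTACHED} or \ref{LEM:C-S}: in $t(K_{n,n},U)$, fix $x=x_1$ (value at $a_1$) and $y=y_1$ (value at $b_1$); the edge $a_1b_1$ contributes $U(x_1,y_1)$. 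Each other $A$-vertex $a_j$ is adjacent to $b_1$ and to $b_2,\dots,b_n$; each other $B$-vertex $b_i$ is adjacent to $a_1$ and $a_2,\dots,a_n$. So label $a_1$ and $b_1$, and consider the remaining graph on $\{a_2,\dots,a_n,b_2,\dots,b_n\}$: it is $K_{n-1,n-1}$ plus a labeled edge from $a_1$ to each $b_i$ and from $b_1$ to each $a_j$ and the labeled edge $a_1b_1$. Write this as the $2$-labeled product of $K'_{2,n}$-type star-pieces... Honestly, the main obstacle is finding the right labeled decomposition; once found, Cauchy--Schwarz (\ref{EQ:C-H-2}) together with $t(C_2,U)=\|U\|_2^2$ and the identification of the other factor as $K_{2,n}$ via Lemma \ref{LEM:ATTACHED} (each $a_j$, $j\ge 3$, has all its neighbors in $B$, and no $B$-vertex is adjacent to more than two of $a_1,a_2$ once we split $A=\{a_1\}\cup\{a_2\}\cup\{a_3,\dots\}$ appropriately) finishes it. I expect the proof to read: $K_{n,n}=K'_{1,n}\gprod K'_{n-1,n}$ over the label set $B$... and then bound $K'_{n-1,n}$ down to $K'_{1,n}\cdot C_2^{1/2}$-type pieces by peeling one $A$-vertex at a time, so the real content is an induction on $n$ with base case $n=3$ handled by a direct Lemma \ref{LEM:ATTACHED} split of $K_{3,3}$ into two triang-free $K_{2,3}$'s sharing two $B$-vertices and one leftover edge giving the $C_2^{1/2}$ factor. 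The step I expect to be the bottleneck is verifying the ``at most two'' hypothesis of Lemma \ref{LEM:ATTACHED}/\ref{LEM:C-S} at each peeling stage, since in $K_{n,n}$ every removed $A$-vertex is adjacent to all $B$-vertices; this forces the split of $B$ (not $A$) into pieces of size $\le 2$, which is why the bound is $K_{2,n}C_2^{1/2}$ and not something stronger.
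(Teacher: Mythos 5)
There is a genuine gap: you survey several candidate decompositions, discard each one, and end by conceding that ``the main obstacle is finding the right labeled decomposition'' without finding it. The fallback you sketch (an induction peeling one $A$-vertex at a time, with a base case $K_{3,3}$ split into two $K_{2,3}$'s plus ``one leftover edge giving the $C_2^{1/2}$ factor'') is neither carried out nor correct as described; in particular that split does not satisfy the hypothesis of Lemma \ref{LEM:ATTACHED}, and the $C_2^{1/2}$ does not arise that way.

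The missing idea is this. Delete one edge $uv$ of $K_{n,n}$ and let $H$ be the resulting $2$-labeled graph with $u,v$ labeled. Then $K_{n,n}=\unl{K_2''H}$, and \eqref{EQ:C-H-2} gives $K_{n,n}^2\le (K_2'')^{\gpow2}H^{\gpow2}=C_2\,H^{\gpow2}$; the single deleted edge is the source of the $C_2^{1/2}$. You came within a hair of this when you fixed $x_1$ at $a_1$ and $y_1$ at $b_1$ and noted that ``the edge $a_1b_1$ contributes $U(x_1,y_1)$'', but you then tried to decompose the remaining graph further instead of applying Cauchy--Schwarz to the two factors $K_2''$ and $H$ as they stand. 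The second missing step is the bound on $H^{\gpow2}$: in the gluing of the two copies of $H$ along $u$ and $v$, take two unlabeled nodes from one color class of the first copy and two unlabeled nodes from the \emph{opposite} color class of the second copy. These four nodes are independent, each has degree $n$, and no node is adjacent to more than two of them (a common neighbor of the first pair lies in the first copy, of the second pair in the second copy, and the shared nodes $u,v$ are each adjacent to only one of the pairs). Corollary \ref{COR:INDEP} then yields $H^{\gpow2}\le K_{2,n}^2$, whence $K_{n,n}^2\le C_2K_{2,n}^2$. Note that the doubling is essential precisely because of the obstacle you correctly identified: in $K_{n,n}$ itself every $A$-vertex is adjacent to every $B$-vertex, so the ``at most two'' hypothesis can never be met; only after squaring do independent high-degree nodes with small joint neighborhoods become available.
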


\begin{proof}
Let $H$ be the 2-labeled graph obtained from $K_{n,n}$ by deleting an
edge and labeling its endpoints. Then $K_{n,n}=\unl{K_2''H}$, and
hence
\[
K_{n,n}^2 \le (K_2'')^{\gpow2} H^{\gpow2}  = C_2 H^{\gpow2}.
\]
Now taking two unlabeled nodes from one color class from one copy of
$H$ and two unlabeled nodes from the other color class from the other
copy, we get a set of $4$ independent nodes of degree $n$ such that
no three have a neighbor in common. Hence by Corollary
\ref{COR:INDEP},
\[
H^{\gpow2}\le K_{2,n}^2,
\]
which proves the Lemma.
\end{proof}

\begin{example}\label{EXA:KNN}
Let $U:~[0,1]^2\to[-1,1]$ be defined by
\[
U(x,y)=
  \begin{cases}
    -1, & \text{if $x,y\ge 1/2$}, \\
     & \text{otherwise}.
  \end{cases}
\]
Then it is easy to calculate that for all $n,m\ge 1$,
\[
t(K_{n,m},U) = \frac14.
\]
\end{example}

\begin{lemma}\label{LEM:C4FIX}
For all $U\in\WW$ and $x\in I$,
\[
0\le t_{x}(C_{2r}',U)\le t(C_{4r-4},U)^{1/2}.
\]
\end{lemma}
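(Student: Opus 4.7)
The plan is to represent $t_x(C_{2r}', U)$ as an integral of a square (which immediately gives nonnegativity), and then peel off the two edges incident to the labeled vertex and apply Cauchy--Schwarz to extract a $t(C_{4r-4}, U)^{1/2}$ factor.

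First I would cut the cycle $C_{2r}$ at $x$ and at its antipode: this realizes $C_{2r}$ as the $2$-labeled product $(P''_{r+1})^{\gpow 2}$, obtained by identifying both endpoints of two copies of $P_{r+1}$. Unlabeling the antipode while keeping $x$ labeled recovers $C_{2r}'$. Writing
\[
K_m(a, b) := t_{a, b}(P''_{m+1}, U)
\]
for the $m$-fold convolution kernel of $U$, Fubini gives
\[
t_x(C_{2r}', U) = \int_I K_r(x, y)^2 \, dy,
\]
which is manifestly nonnegative, establishing the first inequality.

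For the upper bound I would peel one $U$-factor off each of the two paths at $x$. Substituting $K_r(x, y) = \int_I U(x, z) K_{r-1}(z, y) \, dz$ and using Fubini,
\[
t_x(C_{2r}', U) = \int_{I^2} U(x, z_1)\, U(x, z_2) \Bigl( \int_I K_{r-1}(z_1, y) K_{r-1}(z_2, y) \, dy \Bigr) dz_1 \, dz_2.
\]
Since $U$ is symmetric, the inner integral equals $K_{2r-2}(z_1, z_2)$; graph-theoretically this is the decomposition $P''_{2r-1} = P''_r \gprod P''_r$ in the $2$-labeled algebra. Applying Cauchy--Schwarz on $L^2(I^2)$ to the two factors $U(x, z_1) U(x, z_2)$ and $K_{2r-2}(z_1, z_2)$ yields
\[
t_x(C_{2r}', U) \le \Bigl( \int_I U(x, z)^2 \, dz \Bigr) \cdot \Bigl( \int_{I^2} K_{2r-2}(z_1, z_2)^2 \, dz_1 \, dz_2 \Bigr)^{1/2}.
\]
The second factor is exactly $t(C_{4r-4}, U)^{1/2}$, since gluing two copies of $P''_{2r-1}$ along both labeled endpoints produces $C_{4r-4}$. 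The first factor is $\|U(x, \cdot)\|_2^2 \le 1$ for $U \in \WW_1$, which completes the bound.

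The only place that requires care is the ``pairing'' identity $\int_I K_{r-1}(z_1, y) K_{r-1}(z_2, y) \, dy = K_{2r-2}(z_1, z_2)$, which uses symmetry of $U$ in an essential way (it is the statement that $T_U^{r-1} (T_U^{r-1})^T = T_U^{2(r-1)}$ for self-adjoint $T_U$); the remaining steps are routine Cauchy--Schwarz together with the standard splitting of even cycles into two equal-length paths.
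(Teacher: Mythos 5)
Your proof is correct and follows essentially the same route as the paper: the nonnegativity comes from writing $t_x(C_{2r}',U)$ as $\int_I t_{xy}(P''_{r+1},U)^2\,dy$, and the upper bound comes from peeling the two edges at $x$ to get $\int_{I^2}U(x,u)\,t_{uv}(P''_{2r-1},U)\,U(v,x)\,du\,dv$ and applying Cauchy--Schwarz, with the first factor bounded by $1$ (which, as you note, requires $U\in\WW_1$ rather than $U\in\WW$ --- a restriction the paper's own proof also implicitly uses). The only cosmetic difference is that you derive the $P''_{2r-1}$ expression via the pairing identity for the path kernels, whereas the paper writes it down directly.
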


\begin{proof}
The first inequality follows from the formula
\[
t_{x}(C_{2r}',U) =  \int_{I}t_{ux}(P_{r+1}'',U)^2\,du.
\]
For the second, write
\[
t_{x}(C_{2r}',U) = \int_{I^2}
U(x,u)t_{uv}(P_{2r-1}'',U)U(v,x)\,du\,dv,
\]
and apply Cauchy--Schwarz:
\begin{align*}
t_{x}(C_4',U)^2 &\le  \int_{I^2} U(x,u)^2U(v,x)^2\,du\,dv
\int_{I^2} t_{uv}(P_{2r-1}'',U)^2\,du\,dv \\
&= t_x(C_2',U)^2 t(C_{4r-4},U) \le t(C_{4r-4},U).
\end{align*}
\end{proof}

\begin{lemma}\label{LEM:P3FIX}
For all $U\in\WW$, $k\ge 4$ and $x,y\in I$,
\[
|t_{xy}(P_k'',U)|\le t(C_{4k-12},U)^{1/4}.
\]
\end{lemma}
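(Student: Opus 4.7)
The plan is to reduce this pointwise bound to the analogous bound for a cycle with one labeled vertex, so that Lemma~\ref{LEM:C4FIX} can finish the proof. I note first that for the inequality to be dimensionally sensible---the two sides are homogeneous in $U$ of degrees $k-1$ and $k-3$ respectively---one needs $\|U\|_\infty\le 1$, so I will use $U\in\WW_1$ (as is implicitly done in the proof of Lemma~\ref{LEM:C4FIX} itself).

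First I would peel off the edge at the labeled endpoint $x$, writing
\[
t_{xy}(P_k'',U)=\int_I U(x,u)\,t_{uy}(P_{k-1}'',U)\,du,
\]
and apply Cauchy--Schwarz in the variable $u$:
\[
t_{xy}(P_k'',U)^2\le\Bigl(\int_I U(x,u)^2\,du\Bigr)\cdot\Bigl(\int_I t_{uy}(P_{k-1}'',U)^2\,du\Bigr).
\]
The first factor is at most $1$ since $U\in\WW_1$. For the second factor, the key observation is that squaring $t_{uy}(P_{k-1}'',U)$ amounts to gluing two copies of $P_{k-1}''$ along their two labeled endpoints $u$ and $y$, producing two internally disjoint paths of length $k-2$ between $u$ and $y$; integrating in $u$ then unlabels that endpoint, leaving a cycle of length $2(k-2)$ with the single vertex $y$ still labeled. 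Hence the second factor equals $t_y(C_{2k-4}',U)$.

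Since $k\ge 4$ gives $r:=k-2\ge 2$, Lemma~\ref{LEM:C4FIX} applies and yields $t_y(C_{2k-4}',U)\le t(C_{4k-12},U)^{1/2}$. Combining this with the Cauchy--Schwarz bound and taking fourth roots gives $|t_{xy}(P_k'',U)|\le t(C_{4k-12},U)^{1/4}$, as required. I do not anticipate a genuine obstacle; the only step needing any care is the labeled-graph bookkeeping used to identify the second factor as $t_y(C_{2k-4}',U)$, which is straightforward once one tracks which label goes where.
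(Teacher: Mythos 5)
Your proof is correct and is essentially the paper's own argument: peel off the edge at $x$, apply Cauchy--Schwarz, identify the second factor as $t_y(C_{2k-4}',U)$, and invoke Lemma~\ref{LEM:C4FIX} with $r=k-2$ so that $4r-4=4k-12$. Your bookkeeping is in fact more careful than the paper's, which misprints the intermediate quantity as $t_y(C_{2k-2}',U)$ (that would only yield $t(C_{4k-8},U)^{1/4}$), and your remark that the estimate $\int_I U(x,u)^2\,du\le 1$ requires $U\in\WW_1$ rather than $U\in\WW$ is also justified.
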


\begin{proof}
We can write
\[
t_{xy}(P_k'',U) = \int U(x,u)t_{uy}(P_{k-1}'',U)\,du.
\]
Hence by Cauchy--Schwarz,
\begin{align*}
t_{xy}(P_k'',U)^2 &\le \int U(x,u)^2\,du \int
t_{uy}(P_{k-1}'',U)^2\,du\\
&\le \int t_{uy}(P_{k-1}'',U)^2\,du =t_y(C_{2k-2}',U)\,du.
\end{align*}
Applying Lemma \ref{LEM:C4FIX} the proof follows.
\end{proof}

\subsection{The Main Bounds}

Our main Lemma is the following.

\begin{lemma}\label{LEM:MAIN}
Let $F$ be a bipartite graph with all degrees at least $2$, with
girth $2r$, which is not a single cycle or a complete bipartite
graph. Then $F\le C_{2r}C_4^{1/4}$.
\end{lemma}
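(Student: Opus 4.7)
My approach is to take a shortest cycle $C$ of length $2r$ in $F$, identify an ear attached to $C$, and combine Cauchy--Schwarz, H\"older's inequality, Lemma~\ref{LEM:P3FIX} (for the $t(C_4,U)^{1/4}$ factor) and Corollary~\ref{COR:CYCLE0} (for cycle reduction) to obtain the bound. The girth condition rules out chords of $C$; together with the hypotheses that $F$ is not a cycle and has minimum degree at least $2$, this forces an ear $P$ in $F$---a path with internal vertices off $C$ whose endpoints $u, u'$ lie on $C$ (possibly $u = u'$). A short girth argument gives that the ear's length $\ell$ satisfies $\ell \ge r$ and that the endpoints split $C$ into arcs of lengths $p \le r$ and $2r - p \ge r$, with $\ell \ge 2r - p$.

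In the theta case $F = \theta(p, 2r-p, \ell)$, set $g_k(x, y) = t_{xy}(P_{k+1}'', U)$ and write
\[
t(F, U) = \int_{u, u'} g_p(u,u')\, g_{2r-p}(u,u')\, g_\ell(u,u') \, du \, du'.
\]
Cauchy--Schwarz (grouping $g_p g_{2r-p}$ together) yields $t(F, U)^2 \le t(\Theta(p, p, 2r-p, 2r-p), U) \cdot t(C_{2\ell}, U)$, and H\"older applied to the four-path theta gives $t(\Theta(p, p, 2r-p, 2r-p), U) \le \|g_p\|_\infty^2 \cdot t(C_{2(2r-p)}, U)$. Combining,
\[
t(F, U) \le \|g_p\|_\infty \cdot \sqrt{t(C_{2(2r-p)}, U) \cdot t(C_{2\ell}, U)}.
\]
Lemma~\ref{LEM:P3FIX} bounds $\|g_p\|_\infty \le t(C_4, U)^{1/4}$ when $p \ge 3$ (and trivially $\le 1$ when $p \le 2$), and iterating Corollary~\ref{COR:CYCLE0} gives $t(C_{2k}, U) \le t(C_{2r}, U)\, t(C_4, U)^{(k-r)/2}$ for $k \ge r$. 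The product of these bounds yields $t(F, U) \le t(C_{2r}, U)\, t(C_4, U)^{\alpha}$, where $\alpha = (\ell - p)/4$ if $p \le 2$ and $\alpha = (\ell - p + 1)/4$ if $p \ge 3$. A short case check using $\ell \ge 2r - p \ge p$ and the complete-bipartite exclusion (which rules out the boundary case $p = 2, r = 2, \ell = 2$, i.e.\ $F = K_{2,3}$) shows $\alpha \ge 1/4$ in every valid configuration, and since $t(C_4, U) \le 1$ for $U \in \WW_1$, this gives $t(C_4, U)^\alpha \le t(C_4, U)^{1/4}$ as required.

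For general $F$ with structure beyond a single theta, I would either iterate the ear-removal argument inductively on $|E(F)|$, or decompose the residual part of $F$ via Lemma~\ref{LEM:ATTACHED} combined with Corollary~\ref{COR:HANG-BOUND}. The loop-ear case $u = u'$ is handled symmetrically using Lemma~\ref{LEM:C4FIX} instead of Lemma~\ref{LEM:P3FIX}, yielding a sup-norm bound of $t(C_4, U)^{1/2} \le t(C_4, U)^{1/4}$. The main obstacle is the signed structure of $U$: since $g_k(u, u')$ may take negative values, the H\"older step must place a non-negative factor (like $g_k^2$) adjacent to the bounded factor, and the delicate bookkeeping of signs forces a careful choice of which pair of paths to ``square'' in the Cauchy--Schwarz step. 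The complete-bipartite exclusion is precisely what is needed to eliminate the boundary configurations where the exponent $\alpha$ would otherwise drop below $1/4$.
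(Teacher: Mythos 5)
Your theta-graph computation is essentially sound, and it is close in spirit to what the paper does for one subcase (three paths of length $3$ joining two branch vertices, where the paper also invokes Lemma~\ref{LEM:P3FIX}); the exponent bookkeeping with $\ell\ge 2r-p\ge p$ and the exclusion of $K_{2,3}$ checks out. But the proposal does not prove the lemma, because the reduction of a general $F$ to the ear/theta configuration is exactly where the difficulty lies, and it is only asserted. Two concrete problems. First, the structural claim fails: a shortest cycle $C$ need not carry an ear with both endpoints on $C$ (even allowing $u=u'$). If $C$ is a leaf block, the rest of $F$ attaches through a single cut vertex via a path (e.g.\ a ``dumbbell'' of two cycles joined by a path), and no closed ear returns to $C$; your framework has no case for this. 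Second, ``iterate the ear-removal argument'' or ``apply Lemma~\ref{LEM:ATTACHED} with Corollary~\ref{COR:HANG-BOUND}'' is not a proof: after removing an ear the minimum-degree and girth hypotheses are not preserved, and the accumulation of $C_{2r}$ versus $C_4$ exponents across iterations is precisely the delicate part.

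The paper's actual proof shows why this cannot be waved away. The bulk of the argument (Case 4) decomposes $F$ around two degree-$\ge 3$ vertices in the same class into two trees $F_1,F_2$, proves a lower bound on the value of hanging path systems in $F_i^{\gpow2}$ (Claims~\ref{CLAIM:TREES}--\ref{CLAIM:MAIN}), and then feeds this into Corollary~\ref{COR:HANG-BOUND}. That counting argument has a genuine boundary failure: the subdivision of $K_{3,3}$ achieves only value $2r-2$ and must be handled by an entirely separate route ($K_{3,3}\le C_2^{1/2}K_{3,2}$ via Lemma~\ref{LEM:KNN} and then Lemma~\ref{LEM:SUBDIV}). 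Your sketch contains no mechanism that would detect or dispose of such exceptional graphs, nor does it address girth-$4$ graphs beyond thetas (the paper's Case~1 is a separate argument using the non-complete-bipartite hypothesis globally, not just to kill one boundary value of $\ell$), nor generalized thetas with four or more parallel paths. So the gap is not a technicality: the part you have written covers a thin slice of the statement, and the proposed extensions to the general case would not go through as described.
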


Before proving this lemma, we need some preparation. Let $T$ be a
rooted tree. By its min-depth we mean the minimum distance of any
leaf from the root. (As usual, the {\it depth} of $T$ is the maximum
distance of any leaf from the root.) For a rooted tree $T$, we denote
by $T^2$ the graph obtained by taking two copies of $T$ and
identifying leaves corresponding to each other.

\begin{lemma}\label{LEM:TREE1}
Let $T$ be a tree with min-depth $h$ and depth $g$. Then $T^{\gpow2}$
contains a hanging path system with value at least $g+\max(0,h-3)$,
in which the paths are not longer than $\max(g,2)$.
\end{lemma}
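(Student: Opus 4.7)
The plan is to find the hanging path system inside $T^{\gpow2}$ by doubling a root-to-deepest-leaf path of $T$ and, when needed, augmenting it with detours through other subtrees guaranteed by the min-depth hypothesis. Fix a root-to-leaf path $P = (r = v_0, v_1, \ldots, v_g)$ in $T$ with $v_g$ a leaf at depth $g$, and let $\hat P$ be its lift in $T^{\gpow2}$: a path of length $2g$ from $r_1$ to $r_2$ through the shared vertex $v_g$. An interior vertex $v_i^{(k)}$ of $\hat P$ (for $1 \le i \le g-1$ and $k \in \{1,2\}$) has $T^{\gpow2}$-degree $2$ exactly when $v_i$ has a single child in $T$; otherwise it is a branch vertex of $T^{\gpow2}$ which must appear as an endpoint of any hanging path containing it.

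In the easy case, $P$ has no interior branching in $T$, so $\hat P$ is a clean chain of length $2g$. For $g\ge 2$ I would split $\hat P$ at its midpoint $v_g$ into two chains of length $g$, yielding total value $2(g-1)=2g-2$; two paths meet at $v_g$ and one each at $r_1$, $r_2$, respecting the two-starts constraint. Since $h\le g$, we have $g+\max(0,h-3)\le g+\max(0,g-3)\le 2g-2$, so the bound is met. The remaining case $g=1$ is immediate: $\hat P$ itself is a single hanging path of length $2$ and value $1$, matching the bound of $1$.

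In the hard case, let $j\in\{1,\ldots,g-1\}$ be smallest with $v_j$ having an additional child $w\ne v_{j+1}$. By minimality of $j$, the two outer arms $r_1\to v_j^{(1)}$ and $v_j^{(2)}\to r_2$ of $\hat P$ are clean chains of length $j$. The min-depth hypothesis forces the subtree $T_w$ rooted at $w$ to contain a leaf at distance $\ge h-j-1$ from $w$, so in $T^{\gpow2}$ the two copies of $T_w$ glue through that shared leaf into a detour between $v_j^{(1)}$ and $v_j^{(2)}$ of length $\ge 2(h-j)$. The inner arm of $\hat P$, from $v_j^{(1)}$ through $v_g$ back to $v_j^{(2)}$, has length $2(g-j)$ and may carry further branches that can be handled recursively.

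At each of $v_j^{(1)}$, $v_j^{(2)}$ only two of the three locally incident chains (outer arm, detour, inner arm) can start a hanging path, and each chosen chain must be split into pieces of length $\le M=\max(g,2)$. The main obstacle is to show that at least one of the three pairwise configurations reaches the bound $g+\max(0,h-3)$: their values are roughly $2(j-1)+2(g-j)-c_1$ (outer $+$ inner), $2(j-1)+2(h-j)-c_2$ (outer $+$ detour), and $2(g-j)+2(h-j)-c_3$ (detour $+$ inner, with the outer arms dropped), where each $c_i\in\{1,2\}$ accounts for the split into integer-length pieces. The relations $h\le g$ and $j\ge 1$ guarantee that the best of these three exceeds $g+\max(0,h-3)$. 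When the inner arm itself carries further branches, the same analysis is iterated on the deepest remaining subtree (equivalently, presented as an induction on $|V(T)|$), each step paying for a newly-lost internal node at a branch vertex with the additional length guaranteed by the min-depth condition in the corresponding subtree.
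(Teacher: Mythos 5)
Your overall strategy --- peel off the path from the root to the first branch point, use the deepest branch together with a second branch guaranteed by the min-depth condition, and recurse --- is essentially the skeleton of the paper's own inductive proof. But as written the argument has a genuine gap at exactly the point you yourself flag as ``the main obstacle''. First, the arithmetic for the three pairwise configurations is asserted, not checked, and it does not go through uniformly: with the worst-case split cost $c_i=2$, the configuration (outer $+$ inner) gives $2g-4$ and (outer $+$ detour) gives at most $2h-4$, both of which fall short of the target $g+\max(0,h-3)=2g-3$ when $h=g\ge3$; one is then forced onto the third configuration, whose value $2(g-j)+2(h-j)-c_3$ only beats $2g-3$ for small $j$. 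So a case split on $j$ versus $g$ (using that the inner arm needs no splitting when $j\ge g/2$, whence $c_1=1$ and the first configuration attains $2g-3$) is required and is missing.

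Second, and more seriously, the computation assumes the inner arm and the detour are clean chains of degree-$2$ vertices. When $T_w$, or the part of $T$ below $v_j$, branches further, each interior branch vertex destroys two internal nodes of your candidate paths (one in each copy of $T$), and the claim that this loss is ``paid for by the additional length guaranteed by the min-depth condition'' is precisely the content of the lemma being proved --- it cannot be discharged by a one-sentence iteration. To make it work you need an honest induction hypothesis quantifying the value extractable from a squared subtree of given depth and min-depth, together with the length bound $\max(\cdot,2)$ on its paths and the ``at most two paths start at any node'' constraint at the vertex where the subsystems are joined. This is what the paper does: it reduces to a root of degree $1$, lets $a$ be the distance to the first branching point or leaf $v$, applies the induction hypothesis to two subtrees rooted at $v$ (of depths $g-a$ and $\ge h-a$, min-depths $\ge h-a$, one of them with its root deleted when $a\ge2$ so that the doubled initial path can be added), and verifies the resulting value $g+h-3\ge g+\max(0,h-3)$ explicitly. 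Your proposal would become a proof only after this inductive bookkeeping is carried out.
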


\begin{proof}
The proof is by induction on $|V(T)|$. We may assume that the root
has degree $1$, else we can delete all branches but the deepest from
the root. Let $a$ denote the length of the path $P$ in $T$ from the
root $r$ to the first branching point or leaf $v$.

If $P$ ends at a leaf, then the whole tree is a path of length $a=
g=h$. If $a=1$, we get a hanging path in $T^{\gpow2}$ of length $2$,
and so of value $1=1+\max(0,-1)$. If $a\ge 2$, then we can even cut
this into two, and get two hanging paths in $T^{\gpow2}$ of length
$a$, which has value $2a-2\ge a+\max(0,a-3)$.

If $P$ ends at a branching point, then we consider two subtrees
$F_1,F_2$ rooted at $v$ (there may be more), where $F_1$ has depth
$g-a$. Clearly, $F_1$ has mind-depth at least $h-a$ and $F_2$ has
min-depth and depth at least $h-a$. By induction, $F_1^{\gpow2}$ and
$F_2^{\gpow2}$ contain hanging path systems of value $g-a+\max(0,
(h-a)-3)$ and $h-a+\max(0, (h-a)-3)$, respectively. The two systems
together have value at least $g+h-2a$, and they form a valid system
since $v$ (and its mirror image) are contained in at most one path of
each system. If $a=1$, we are done, since clearly $h\ge 2$ and so
$g+h-2\ge g+\max(0,h-3)$.

Assume that $a\ge 2$. Let $F_3$ be obtained from $F_2$ by deleting
its root. By induction, $F_1^{\gpow2}$ contains hanging path systems
of value $g-a+\max(0, h-a-3)$, and $F_3^{\gpow2}$ contains a hanging
path system of value $h-a+\max(0, h-a-4)$. We can add $P$ and its
mirror image, to get a hanging path system of value
\begin{align*}
(g-a) + (h-a-1) + \max(0, h-a-3)+\max(0, h-a-4) +2(a-1)\\
\ge (g-a)+(h-a-1)+2(a-1)=g+h-3 = g+\max(0,h-3),
\end{align*}
since $h\ge a+1\ge 3$. We know that every path constructed lies in
the tree or its mirror image, except for the paths in the case $g=1$,
which are of length $2$.
\end{proof}

\begin{proof*}{Lemma \ref{LEM:MAIN}}
We distinguish several cases.

\medskip

{\bf Case 1.} $r=2$. By hypothesis, $F$ is not a complete bipartite
graph, and hence we can choose nonadjacent nodes $u$ and $v$ from
different bipartition classes. Let $N$ denote the set of neighbors of
$u$, $|N|=d$, and let $F_0$ denote the graph $F-u$ with the neighbors
of $u$ labeled. Then $F\cong F_0 \gprod K_{d,1}'$, and hence by
\eqref{EQ:FF1F2},
\[
F^2 \le F_0^{\gpow2}(K_{d,1}')^{\gpow2} = F_0^{\gpow2} K_{d,2}\le
F_0^{\gpow2}C_4.
\]
Now let $v_1$ and $v_2$ be the two copies of $v$ in $F_0^2$, and $w$,
any third node in the same bipartition class. These three nodes have
no neighbor in common, so by Corollary \ref{COR:INDEP}, we get that
$F_0^{\gpow2}\le C_4^{3/2}$, and so $F\le C_4^{5/4}$.

\medskip

{\bf Case 2.} $F$ is disconnected. If one of the components is not a
single cycle, we can replace $F$ by this component. If $F$ is the
disjoint union of single cycles, then $F\le C_{2r}^2\le C_{2r}C_4$.

So we may assume that $F$ is connected. Then it must have at least
one node of degree larger than $2$.

\medskip

{\bf Case 3.} $F$ has at most one node of degree larger than $2$ in
each color class. Let $u_1$ and $u_2$ be two nodes, one in each color
class, such that all the other nodes have degree $2$. Then $F$ must
consist of one or more odd paths connecting $u_1$ and $u_2$, and even
cycles attached at $u_1$ and/or $u_2$.

If there are even cycles attached at $u_1$ and also at $u_2$, then
$F$ has a hanging path system consisting of $4$ paths of length $r$,
and so $F\le C_{2r}^2\le C_{2r}C_4^{1/2}$ by Lemma
\ref{LEM:ATTACHED}. If (say) $u_1$ but not $u_2$ has a cycle
attached, then there are at least two paths connecting $u_1$ and
$u_2$, and one of them has length at least $r$. This implies that
$F\le C_{2r}^{3/2}\le C_{2r}C_4^{1/2}$.

So we may assume that $F$ consists of openly disjoint paths
connecting $u_1$ and $u_2$. Since $F$ is not a single cycle, there
are at least three paths. Let $a_1\le a_2\le a_3$ be their lengths.
Clearly $a_1+a_2\ge 2r$. If $a_2\ge r+1$, then we have two hanging
paths of length $r+1$, which implies that $F\le C_{2r+2}\le
C_{2r}C_4^{1/2}$. So we may assume that $a_1=a_2=a_3=r$. If $r\ge 4$,
then we can select two of the paths and path of length $2$ disjoint
from them, which gives $F\le C_{2r}C_4^{1/2}$.

So we get to the special case when $F$ consists of $3$ or more paths
of length $3$ connecting $u_1$ and $u_2$. In this case, we use Lemma
\ref{LEM:P3FIX}:
\begin{align*}
t(F,U)&=\int_{I^2} t_{xy}(P_4'',U)^3\,dx\,dy \le t(C_4,U)^{1/4}
\int_{I^2} t_{xy}(P_4'',U)^2\,dx\,dy\\
&= t(C_6,U) t(C_4,U)^{1/4}.
\end{align*}

\medskip

{\bf Case 4.} Suppose that there are two nodes $u_1,u_2$ in the same
bipartition class of $F$ of degree at least $3$.

Let $S_1$ be the set of nodes in $F$ with $d(x,u_1)\le
\min(r-2,d(x,u_2)-2)$, and let $S'_1$ be the set of neighbors of
$S_1$ in $F$. We define $S_2$ and $S_2'$ analogously. Let $F_i$ be
the subgraph induced by $S_i\cup S_i'$.

\begin{claim}\label{CLAIM:TREES}
$F_i$ is a tree with endnode set $S_i'$. Every $x\in S_i'$ satisfies
$d(x,u_1)=\min(r-1,d(x,u_2))$.
\end{claim}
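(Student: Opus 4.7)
The plan hinges on the observation that the girth-$2r$ condition makes the ball $B_{r-1}(u_1)$ look like a tree; I treat only $i=1$, the other case being symmetric. First I would verify that the induced subgraph $T = F[B_{r-1}(u_1)]$ coincides with the BFS tree of $F$ rooted at $u_1$ truncated at depth $r-1$: a non-tree edge between BFS-depths $i$ and $i+1 \le r-1$ would, combined with the two tree paths back to $u_1$, trap a cycle of length at most $2(i+1) \le 2r-2$, violating the girth assumption. From the definitions, $S_1 \subseteq B_{r-2}(u_1)$ and $S_1' \subseteq N_F(S_1) \subseteq B_{r-1}(u_1)$, so $F_1 \subseteq T$ is automatically a forest.

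Connectedness of $F_1$ follows from showing that $S_1$ is closed under taking geodesics toward $u_1$. For $y$ on a shortest $F$-path from some $x \in S_1$ to $u_1$, one gets $d(y, u_1) = d(x, u_1) - d(x, y) \le r-2$ and, via the triangle inequality along the shared subpath, $d(y, u_2) - d(y, u_1) \ge d(x, u_2) - d(x, u_1) \ge 2$, so $y \in S_1$. Since every $x \in S_1'$ has by definition a neighbor in $S_1$ and $u_1$ anchors $S_1$, $F_1$ is a connected subtree of $T$, hence a tree.

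For the leaf identification, I would note first that any $x \in S_1$ satisfies $\deg_{F_1}(x) = \deg_F(x) \ge 2$, since every $F$-neighbor of $x$ lies in $N_F(S_1) \subseteq S_1 \cup S_1'$, so $x$ is not a leaf. Conversely, for $x \in S_1'$ with $d = d(x, u_1)$, I would split on the sign of $d - (r-1)$. When $d = r-1$, the unique depth-$(r-2)$ $F$-neighbor of $x$ is its $T$-parent $p$ (all other $F$-neighbors sit at depth $r$, outside $F_1$); the requirement $x \in N_F(S_1)$ forces $p \in S_1$, and $d(p, u_2) \ge r$ combined with bipartite parity forces $d(x, u_2) \ge r-1$, so $\min(r-1, d(x, u_2)) = r-1 = d$.

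The delicate case is $d < r-1$. Here $x \notin S_1$ together with bipartite parity squeezes $d(x, u_2) \le d(x, u_1)$, and the existence of some $S_1$-neighbor $y$ (which satisfies $d(y, u_2) \ge d(y, u_1) + 2$) rules out the strict inequality, yielding $d(x, u_2) = d(x, u_1) = d$. The same squeeze prevents any child $c$ of $x$ in $T$ (at depth $d+1$) from lying in $S_1$, since $d(c, u_2) \le d+1 < (d+1) + 2$; and iterating the estimate one step further on $c$'s own children shows $c$ has no $S_1$-neighbor either, so $c \notin S_1'$. Consequently the only $F_1$-neighbor of $x$ is its $T$-parent, and $\min(r-1, d(x, u_2)) = d$ is read off. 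The main obstacle will be keeping this distance-and-parity squeeze clean over the two-level inductive check; everything else is BFS bookkeeping around the girth hypothesis.
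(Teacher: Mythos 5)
Your proof is correct and takes essentially the same route as the paper's: the girth hypothesis forces the tree structure, the parity/triangle-inequality squeeze on $d(x,u_1)$ versus $d(x,u_2)$ gives one direction of the distance formula, and the existence of a neighbor in $S_1$ gives the other. The only difference is that you carefully verify the steps the paper labels as trivial or immediate --- that $F_1$ is connected (via geodesic-closure of $S_1$) and that vertices of $S_1'$ really have degree one in the induced subgraph (via the two-level check that children of $x$ lie in neither $S_1$ nor $S_1'$) --- which is a useful amplification rather than a different argument.
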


From the fact that $F$ has girth $2r$ it follows that $F_i$ is a
tree. The nodes in $S_i$ are not endnodes of $F_i$, since their
degree in $F$ is at least $2$ and all their neighbors are nodes of
$F_i$. It is also trivial that the nodes in $S_i'$ are endnodes. Let
$x\in S_i'$, then $x\notin S_i$ and hence $d(x,u_1)\ge
\min(r-1,d(x,u_2)-1)$. But $d(x,u_1)$ and $d(x,u_2)$ have the same
parity, and hence it follows that $d(x,u_1)\ge \min(r-1,d(x,u_2))$.
On the other hand, $x$ has a neighbor $y\in S_i$, and hence
$d(x,u_1)\le d(y,u_1)+1\le r-1$, and $d(x,u_1)\le d(y,u_1)+1\le
d(y,u_2)-1\le d(x,u_2)$. This implies that $d(x,u_1)\le
\min(r-1,d(x,u_2))$, which proves the Claim.

\begin{claim}\label{CLAIM:ATTACHED}
There is no edge between $S_1$ and $S_2$.
\end{claim}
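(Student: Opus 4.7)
The plan is to derive a contradiction purely from the triangle inequality and the defining inequalities of $S_1$ and $S_2$; the girth hypothesis and the $r-2$ bounds are not needed for this particular claim.

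Suppose, for contradiction, that there is an edge $xy\in E(F)$ with $x\in S_1$ and $y\in S_2$. From the definitions of $S_1$ and $S_2$ I extract only the second halves of the defining minima:
\[
d(x,u_1)\le d(x,u_2)-2, \qquad d(y,u_2)\le d(y,u_1)-2.
\]
Since $xy$ is an edge, the triangle inequality applied to the distance function gives
\[
d(y,u_1)\le d(x,u_1)+1, \qquad d(x,u_2)\le d(y,u_2)+1.
\]

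First I would chain these four inequalities to compare $d(x,u_1)$ with $d(x,u_2)$. Starting from $x\in S_1$ I would use the two triangle estimates together with the hypothesis $y\in S_2$:
\[
d(x,u_1)\;\ge\;d(y,u_1)-1\;\ge\;d(y,u_2)+1\;\ge\;d(x,u_2).
\]
This contradicts the inequality $d(x,u_1)\le d(x,u_2)-2$ coming from $x\in S_1$, so no such edge $xy$ can exist.

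There is no real obstacle here: the whole argument is just the observation that membership in $S_1$ (resp.\ $S_2$) forces a strict preference, with a gap of at least $2$, for $u_1$ (resp.\ $u_2$) in the distance function, whereas adjacent vertices have distance functions differing by at most $1$; a gap of $2$ on each side cannot be bridged by a single edge. The only thing to be careful about in the write-up is making sure the chain of inequalities is assembled in the correct order, since the two conditions enter symmetrically through the edge $xy$.
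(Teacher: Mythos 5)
Your argument is correct and is essentially the paper's own proof: both combine the ``gap of $2$'' halves of the defining inequalities of $S_1$ and $S_2$ with the fact that adjacent vertices have distance functions differing by at most $1$, and chain these to a contradiction. The only cosmetic difference is that the paper routes one step through a parity observation, which your direct chain of triangle inequalities makes unnecessary.
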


Indeed, suppose that $x_1x_2$ is such an edge, $x_i\in S_i$. Then
$d(x_1,u_1)<d(x_2,u_2)$, which by parity means that $d(x_1,u_1)\le
d(x_2,u_2)-2$. But then $d(x_2,u_1)\le d(x_1,u_1)+1 \le
d(x_2,u_2)-1\le d(x_2,u_1)$, showing that $x_2\notin S_2$.

Consider the nodes of $F_i$ in $S'_i$ as labeled. Lemma
\ref{LEM:ATTACHED} implies that
\begin{equation}\label{EQ:FF1F2}
F^2\le F_1^2 F_2^2.
\end{equation}
Hence to complete the proof, it suffices to show that
\begin{equation}\label{EQ:FFC}
F_i^2\le C_{2r}C_4^{1/4}.
\end{equation}
This will follow by Corollary \ref{COR:HANG-BOUND}, if we construct
in $F_i$ a hanging path system of paths of length at most $r$ with
value $2r-1$.

\begin{claim}\label{CLAIM:DEPTH}
Let $y\not=x$ be two leaves of $F_1$. Then $d(r,x)+d(r,y)+d(x,y)\ge
2r$.
\end{claim}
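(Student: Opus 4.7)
The approach is to exploit the girth hypothesis through cycle-creation arguments. Let $v$ denote the least common ancestor of $x$ and $y$ in $F_1$ viewed as a tree rooted at $u_1$; then the tree distance satisfies $d_{F_1}(x,y) = d(u_1,x) + d(u_1,y) - 2 d(u_1,v)$, and since $x$ and $y$ are distinct leaves of $F_1$, the vertex $v$ is a proper ancestor of each, so $d(u_1,v) \le \min(d(u_1,x), d(u_1,y)) - 1$.

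First suppose $d(x,y) < d_{F_1}(x,y)$. Then the shortest $F$-path from $x$ to $y$ differs from the $F_1$-tree path, so their union contains a cycle of length at most $d(x,y) + d_{F_1}(x,y)$; by girth this sum is at least $2r$. Substituting $d(u_1,x) + d(u_1,y) = d_{F_1}(x,y) + 2d(u_1,v)$ then yields $d(u_1,x) + d(u_1,y) + d(x,y) \ge 2r + 2 d(u_1,v) \ge 2r$.

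Otherwise $d(x,y) = d_{F_1}(x,y)$, and the target reduces to $d(u_1,x) + d(u_1,y) - d(u_1,v) \ge r$. If one of the leaves, say $x$, attains the maximal depth $d(u_1,x) = r-1$, the inequality $d(u_1,v) \le d(u_1,y) - 1$ gives $d(u_1,x) + d(u_1,y) - d(u_1,v) \ge (r-1) + 1 = r$ at once. The remaining sub-case is $d(u_1,x), d(u_1,y) < r-1$, where Claim~\ref{CLAIM:TREES} forces $d(u_2,x) = d(u_1,x)$ and $d(u_2,y) = d(u_1,y)$. Here I consider the walk $x \to u_2 \to y$ in $F$ of length $d(u_1,x) + d(u_1,y)$, after verifying that $u_2 \notin F_1$ (the defining $S_1$-condition $d(u_1,w) \le d(u_2,w) - 2$ is absurd at $w = u_2$ itself, and no neighbor $w$ of $u_2$ can satisfy it either, since then $d(u_1,w) \le d(u_2,w) - 2 = -1$). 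Because $u_2$ is not on the $F_1$-tree path the two walks differ; moreover any cycle contained inside the walk through $u_2$ alone would have length below $2(r-1) < 2r$, contradicting girth, so this walk is actually a simple path. Its union with the tree path therefore contains a cycle of length at most $(d(u_1,x)+d(u_1,y)) + d_{F_1}(x,y) = 2(d(u_1,x)+d(u_1,y)) - 2d(u_1,v)$, which by girth is at least $2r$, giving exactly the required inequality.

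The main obstacle is this last sub-case: pure tree-and-triangle bookkeeping breaks when both leaves are shallow, and one has to invoke the auxiliary vertex $u_2$ --- whose very purpose in the $S_1/S_2$ partition is to supply a secondary $x$-to-$y$ path bypassing $F_1$ and closing a cycle of bounded length.
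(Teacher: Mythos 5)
Your argument is essentially the paper's: the only substantive case is the one where both leaves are shallow, and there Claim~\ref{CLAIM:TREES} supplies $d(u_2,x)=d(u_1,x)$ and $d(u_2,y)=d(u_1,y)$, after which one closes a cycle by routing from $x$ to $y$ through $u_2$ and returning inside $F_1$. Your extra bookkeeping (the least common ancestor, the split on whether $d(x,y)=d_{F_1}(x,y)$, and the explicit check that $u_2\notin S_1\cup S_1'$) is correct, and it in particular substantiates the case $d(u_1,x)=r-1$ that the paper dismisses as ``trivial''. One thing you use silently is that distances from $u_1$ measured in the tree $F_1$ agree with distances in $F$; this is true (a shortest $F$-path from $u_1$ to a vertex of $S_1$ stays inside $S_1$, and a leaf cannot be closer to $u_1$ than its parent by the parity argument of Claim~\ref{CLAIM:TREES}), but the identity $d_{F_1}(x,y)=d(u_1,x)+d(u_1,y)-2d(u_1,v)$ with $F$-distances on the right depends on it, so it should be stated.

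The step I would not accept as written is ``any cycle contained inside the walk through $u_2$ would be too short, so this walk is actually a simple path.'' A walk containing no cycle need not be simple: it can backtrack. Since $d(x,u_2),d(y,u_2)\le r-2$, shortest paths of these lengths are unique (two distinct ones would close a cycle of length $<2r$), so what your girth argument really shows is that the shortest $x$--$u_2$ and $y$--$u_2$ paths can only meet in a common terminal segment ending at $u_2$; if that segment is nontrivial, the concatenated walk doubles back, is not simple, contains no cycle, and your ``union of two distinct paths contains a cycle'' step does not yet apply. The paper's own proof elides the same point (``the closed walk contains a cycle''), and the gap is patchable: cancel the common segment to get a simple $x$--$y$ path $Q$ through the farthest shared vertex $z$, with the $x$--$z$ piece of length $d(x,u_1)-d(z,u_2)$. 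If $Q$ coincided with the tree path $T$, then $z$ would be an internal vertex of $T$, hence $z\in S_1$ and $d(z,u_1)\le d(z,u_2)-2$; but $Q=T$ also gives $d_{F_1}(x,z)=d(x,u_1)-d(z,u_2)$, while $d_{F_1}(x,z)\ge d(x,u_1)-d(z,u_1)$ always, forcing $d(z,u_1)\ge d(z,u_2)$, a contradiction. So $Q\ne T$, their union contains a cycle of length at most $|Q|+|T|\le d(u_1,x)+d(u_1,y)+d(x,y)$, and the claim follows as before. With that repair (and the degenerate possibility $z\in\{x,y\}$ excluded by the same ``shortest paths from $u_1$ stay in $F_1$'' observation), your proof is complete.
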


If $d(r,x)=r-1$ or $d(r,y)=r-1$ then this is trivial, so suppose that
$d(r,x),d(r,y)\le r-2$. Then by Claim \ref{CLAIM:TREES}, we must have
$d(x,u_2)=d(x,u_1)$ and $d(y,u_2)=d(y,u_1)$. Going from $x$ to $u_2$
to $y$ and back to $x$ in $F$, we get a closed walk of length
$d(r,x)+d(r,y)+d(x,y)$, which contains a cycle of length no more than
that, which implies the inequality in the Claim.

\begin{claim}\label{CLAIM:MAIN}
Each of $F_1$ and $F_2$ contains a hanging path system of paths with
length at most $r$, with value $2r-2$. At least one of them contains
such a system with value $2r-1$.
\end{claim}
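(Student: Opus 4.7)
My plan is to prove Claim~\ref{CLAIM:MAIN} by constructing explicit hanging path systems in $F_i^{\gpow2}$, using three ingredients: (a) $F_i$ is a tree rooted at $u_i$ (Claim~\ref{CLAIM:TREES}); (b) $u_i$ has degree $\ge 3$ in $F_i$, since every neighbor of $u_i$ in $F$ lies in $S_i\cup S_i'$ (inheriting the Case~4 hypothesis); (c) the diameter inequality of Claim~\ref{CLAIM:DEPTH}, which combined with the tree-walk bound $d_F(x,y) \le d_{F_i}(u_i,x) + d_{F_i}(u_i,y)$ yields $d(u_i,x) + d(u_i,y) \ge r$ for every pair of leaves $x,y\in S_i'$.

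The preliminary observation is that an interior vertex of a hanging path in $F_i^{\gpow2}$ must have degree $2$, so no branching vertex of $F_i$ can appear in the interior (in either copy), while all internal degree-$2$ vertices of $F_i$ and all leaves (which become degree $2$ after gluing) are admissible. The basic building blocks are therefore sub-paths of a root-to-leaf tree path in one copy, and sub-paths of a root-to-root doubled path $u_i^{(1)}\!\to\!\ell\!\to\! u_i^{(2)}$ through a leaf; crucially, a doubled path can be truncated on either side, so one can construct paths through a leaf that avoid $u_i$ entirely, freeing the already-scarce slots at $u_i^{(1)}, u_i^{(2)}$.

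The construction itself uses three children $c_1,c_2,c_3$ of $u_i$ (available since $\deg_{F_i}(u_i)\ge 3$) with representative deep leaves $\ell_j\in B_j$ and depths $a_j=d_{F_i}(u_i,\ell_j)$. By the pairwise inequality above, $a_i+a_j\ge r$ for every pair, so at most one $a_j$ falls below $r/2$. In a typical case I build three hanging paths: one starting at $u_i^{(1)}$, crossing $B_1$ in copy~$1$ through $\ell_1$ and continuing into $B_1$ in copy~$2$ just until the total length hits $r$; a mirror path starting at $u_i^{(2)}$ through $B_2$; and a truncated doubled path around $\ell_3$ whose endpoints are $c_3^{(1)},c_3^{(2)}$ (thus avoiding $u_i$). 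Simple arithmetic using $a_1+a_2\ge r$ shows that the total interior count is at least $2r-1$. The weaker bound $2r-2$ (valid for each $F_i$) accounts for degenerate subcases in which the third branch is trivial (e.g.\ a length-$1$ arm corresponding to a common neighbor of $u_1,u_2$, unique by the girth hypothesis); the \emph{at least one} strengthening to $2r-1$ follows by arguing that such a degenerate configuration cannot occur simultaneously for both $F_1$ and $F_2$ — a pigeonhole using the asymmetric placement of $S_i'$-leaves with respect to $u_{3-i}$ (Claim~\ref{CLAIM:TREES}) and the edge-disjointness of Claim~\ref{CLAIM:ATTACHED}.

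The main obstacle will be the case analysis when $F_i$ has deep internal branching inside its branches $B_j$: the ``segment from $u_i$'' may terminate at an interior branching node $v$ before reaching a leaf, and the leaf $\ell_j$ actually sits in a sub-branch below $v$. Handling this requires either using $v$ as a path endpoint (and recursing on the subtree rooted at $v$), or a careful greedy choice of building-block segments that still achieves the needed value; the bookkeeping of the ``at most two paths per vertex'' constraint at every branching vertex (including the two copies of $u_i$) is the combinatorial core of the argument.
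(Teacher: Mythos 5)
Your overall strategy --- root $F_i$ at $u_i$, use $\deg_{F_i}(u_i)\ge 3$ together with Claim~\ref{CLAIM:DEPTH} (in the derived form $d(u_i,x)+d(u_i,y)\ge r$ for leaves $x,y$) and assemble hanging paths branch by branch in $F_i^{\gpow2}$ --- is the same as the paper's, which organizes exactly this bookkeeping through Lemma~\ref{LEM:TREE1} and the two cases 4a/4b. But the proposal has two genuine gaps. The first is that you explicitly defer the case of internal branching inside a branch, calling it ``the combinatorial core.'' That is not a routine loose end: it is precisely where the paper needs the inductive Lemma~\ref{LEM:TREE1} and the estimate \eqref{EQ:2R-2}, and where all the delicate subcases (when equality forces $a=2$, $b=3$, $d_1=d_2=1$, $r=4$, etc.) live. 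A plan that handles only the ``typical case'' and names the hard case without resolving it has not proved the claim.

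The second gap is more serious: your route to the strengthening ``at least one of $F_1,F_2$ achieves value $2r-1$'' --- a pigeonhole showing that the degenerate configuration cannot occur on both sides --- cannot work, because there is a configuration where it occurs on both sides. Take $F=\sbd{K_{3,3}}$, so $r=4$; then $F_1$ and $F_2$ are the same $10$-node tree: a root of degree $3$ with two branches of the form $u-c-v-\{\ell,\ell'\}$ and one pendant edge. In $F_1^{\gpow2}$ the only degree-$2$ vertices are the four nodes of type $c$ and the five glued leaves; consequently every hanging path of positive value is one of nine length-$2$ segments joining two of the six degree-$3$ vertices, and the rule that at most two paths may start at any node caps the total value at $\lfloor 6\cdot 2/2\rfloor=6=2r-2$. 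So $2r-1=7$ is unattainable for \emph{both} $F_1$ and $F_2$. This is exactly why the paper, at the end of Case 4b, abandons Claim~\ref{CLAIM:MAIN} for this single graph and instead bounds $t(\sbd{K_{3,3}},U)$ directly via Lemmas~\ref{LEM:KNN}, \ref{LEM:MON2} and \ref{LEM:SUBDIV}. Any correct write-up must either carve out this exceptional graph (as the paper does) or supply a genuinely different argument for it; as stated, your plan rests on an intermediate assertion that is false.
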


To prove this, we need to distinguish two cases.

\medskip

{\bf Case 4a.} All branches of $F_1$ are single paths. Let $a_1\le
\dots\le a_d$ be their lengths. Claim \ref{CLAIM:DEPTH} implies that
$a_1+a_2\ge r$, so $a_2\ge r/2$. The graph $F_1^2$ consists of paths
$Q_1,\dots,Q_d$ of length $2a_1,\dots, 2a_d$ connecting $u_1$ and its
mirror image $u_1'$. Select subpaths of length $r$ from $Q_2$ and
$Q_3$, this gives a hanging path system of value $2r-2$. If $a_1\ge
2$, then we can add to this a path of length $2$ from $Q_1$ not
containing its endpoints, and we get a path system of value $2r-1$.
So we may assume that $a_1=1$. Then $a_2\ge r-1>r/2$, and so
$2a_2,2a_r>r$. Thus we can select the paths of length $r$ from $Q_2$
and $Q_3$ so that one of them misses $u_1$ and the other one misses
$u_1'$. The we can add $Q_1$ to the system, and conclude as before.

\medskip

{\bf Case 4b.} At least one of the branches of $F_1$, say $A$, is not
a single path. Let $a$ be the length of the path $Q$ from the root
$u_1$ to the first branch point $v$. Let $T_1,T_2$ be two subtrees of
$A$ rooted at $v$, of depth $d_1$ and $d_2$. Let $B$ and $C$ be two
further branches, of depth $b$ and $c$, respectively, where $b\ge c$.
By Claim \ref{CLAIM:DEPTH}, we have
\[
d_1+d_2+a\ge r\qquad\text{and}\qquad b+c\ge r.
\]

We start with a simple computation showing that we can get a hanging
path system in $F_1^2$ of value $2r-2$. If $a=1$, then we choose a
hanging path system from $T_1^2$ of value $d_1$, from $T_2^2$ of
value $d_2$, from $B^2$ of value $b$ and from $C^2$ of value $c$.
This is a total of
\[
d_1+d_2 +b +c \ge 2r -1.
\]
If $a\ge 2$, then we choose a hanging path system from $T_1^2$ of
value $d_1$, from $(T_2-v)^2$ of value $d_2-1$, from $B^2$ of value
$b$ and from $(C-u_1)^2$ of value $c-1$. Leaving out $v$ from $T_2$
and $u_1$ from $C$ allows us to add $Q$ and its mirror image of value
$2(a-1)$. This is a total of
\begin{equation}\label{EQ:2R-2}
d_1+d_2-1 +2(a-1) +b +c-1 \ge 2r +a-4 \ge 2r-2.
\end{equation}
If equality holds in all estimates, then $d_1+d_2+a=r$, $b+c=r$, and
$a=2$. It also follows that $b\le 3$, or else we get a larger system
in $B$. Note that the depth of $A$ is at least $a+1=3$, and $c\le
r/2\le b\le 3$.

If $B$ is a single path, then we can select a hanging path of length
$r$ from $B^2$, of value $r-1>b-1$, and we have gained $1$ relative
to the previous construction. So we may assume that $B$ is not a
single path. Then applying the same argument as above with $A$ and
$B$ interchanged, we get that $b=3$, and the depth of $A$ is also
$3$. Hence $d_1=d_2=1$ and $r=d_1+d_2+a=4$. It follows that
$c=r-b=1$, so $C$ consists of a single edge.

If $u_1$ has degree larger than $3$, then applying the argument to
$A,B$ and a fourth branch $D$, we get that $D$ must have depth $1$,
but this contradicts Claim \ref{CLAIM:DEPTH}. Hence the degree of
$u_1$ is $3$.

If $A$ has at least $3$ leaves, then these must be connected to $u_2$
by disjoint paths of length $3$. Since $u_2$ must be connected to the
endpoint of $C$ as well by Claim \ref{CLAIM:TREES}, we get that $u_2$
has degree at least $4$, and so $F_2\ge C_{2r}C_4^{1/2}$.

So $A$ and similarly $B$ have two leaves, and $F_1$ is a 10-node tree
consisting of a path with $5$ nodes and $2$ endnodes hanging from its
endnodes and $1$ from its middle node. $F_2$ must be the same, or
else we are done. There is only one way to glue two copies of this
tree together at their endnodes to get a graph of girth $8$, and this
yields the subdivision of $K_{3,3}$ (by one  node on each edge). To
settle this single graph, we use that
\[
K_{3,3}\le C_2^{1/2} K_{3,2}\le C_2^{1/2}C_4
\]
by Lemmas \ref{LEM:KNN} and \ref{LEM:MON2}, and so by Lemma
\ref{LEM:SUBDIV}, we have
\[
F=\sbd{K_{3,3}}\le(\sbd{C}_2)^{1/2}\sbd{C}_4 = C_4^{1/2}C_8.
\]

Thus we know that $F_1^2 F_2^2\ge C_{2r}$, and for at least one of
them $F_i^2\ge C_{2r}C_4^{1/2}$, which implies that $F^2\ge
F_1^2F_2^2\ge C_{2r}C_4^{1/4}$.
\end{proof*}

Lemma \ref{LEM:ATTACHED} implies that if $F$ is a graph with two
nonadjacent nodes $u,v$ of degree $1$, then $F \le P_3$. We need a
stronger bound:

\begin{lemma}\label{LEM:2END}
Let $F$ be a graph with two nonadjacent nodes $u,v$ of degree $1$,
which is not a star and has at least $3$ edges. Then $F \le
P_3C_4^{1/4}$.
\end{lemma}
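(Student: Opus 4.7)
The plan is to integrate out the two leaves $u,v$ so that their contribution becomes a product of two $h$-factors, where $h(x):=\int_I U(x,y)\,dy$ satisfies $\|h\|_2^2=t(P_3,U)$, and then bound the remaining integral via Cauchy--Schwarz together with an operator-norm reduction, extracting a factor of $t(C_4,U)^{1/4}$ from the extra edge guaranteed by the hypotheses. Let $u',v'$ denote the neighbors of $u,v$ in $F$.

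First consider the case $u'\ne v'$ (where ``not a star'' is automatic). Integrating out $x_u$ and $x_v$ yields
\[
t(F,U)=\int_{I^2} h(x)\,h(y)\,g(x,y)\,dx\,dy,\qquad g(x,y):=t_{xy}(F\setminus\{u,v\},U).
\]
Viewing $g$ as the kernel of an operator $G$ on $L^2(I)$, the Cauchy--Schwarz inequality gives $|t(F,U)|\le \|h\|_2\|Gh\|_2\le\|h\|_2^2\|G\|_{\mathrm{op}}=t(P_3,U)\|G\|_{\mathrm{op}}$, so it suffices to show $\|G\|_{\mathrm{op}}\le t(C_4,U)^{1/4}$. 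The hypothesis $|E(F)|\ge 3$ ensures $F\setminus\{u,v\}$ contains at least one edge, and a structural decomposition of it---peeling off pendants as diagonal multipliers by $h$ (of norm $\le 1$), suppressing degree-$2$ vertices as operator compositions of $U$, and factoring off disconnected components as scalars of modulus $|t(K_2,U)|\le\|h\|_2\le\|U\|_{\mathrm{op}}$---bounds $\|G\|_{\mathrm{op}}$ by a product dominated by a single factor of $\|U\|_{\mathrm{op}}$. The Schatten comparison $\|U\|_{\mathrm{op}}\le\|U\|_{\mathrm{Sch}\text{-}4}=t(C_4,U)^{1/4}$, together with $t(C_4,U)\le 1$ absorbing any excess powers, yields the required bound.

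In the case $u'=v'=:w$, one has $t(F,U)=\int_I h(x)^2\,\tilde g(x)\,dx$ with $\tilde g(x):=t_x(F\setminus\{u,v\},U)$, and the naive estimate $|\tilde g|\le 1$ gives only $|t(F,U)|\le t(P_3,U)$. To sharpen we use the ``not a star'' hypothesis, which supplies an edge $ab$ of $F$ with $a,b\ne w$. When $\deg_F(w)=2$, $F$ splits as a disjoint union of the $P_3$ through $uwv$ and a remainder $F_*$ containing $ab$, so $t(F,U)=t(P_3,U)\,t(F_*,U)$ and the estimate $|t(F_*,U)|\le t(C_4,U)^{1/4}$ (from $|E(F_*)|\ge 1$, by the same operator-norm reduction as in the first case) finishes this subcase. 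When $\deg_F(w)>2$, I would apply Lemma~\ref{LEM:ATTACHED} with a set $S\supseteq\{w,a,b\}$ chosen to separate the leaves $u,v$ from the edge $ab$ and its attachments, producing two $P_3$-factors (accounting for $u,v$) and a third factor which can be bounded above by $C_4^{1/2}$ via a secondary application of Lemma~\ref{LEM:ATTACHED} together with the monotonicity results of Lemmas~\ref{LEM:MON} and \ref{LEM:MON2}.

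The main obstacle is the structural case analysis needed to verify $\|G\|_{\mathrm{op}}\le t(C_4,U)^{1/4}$ uniformly over all arrangements of the extra edges of $F\setminus\{u,v\}$ (and the analogous bound for the third factor in the second case). The analysis proceeds by induction on $|V(F\setminus\{u,v\})|$ via edge- and vertex-peeling reductions, with $F=P_4$---where $g=U$ itself, giving $\|G\|_{\mathrm{op}}=\|U\|_{\mathrm{op}}\le t(C_4,U)^{1/4}$---serving as the tight base case that fixes the exponent $1/4$.
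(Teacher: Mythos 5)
Your reduction in the case $u'\ne v'$ is correct as far as it goes: writing $t(F,U)=\langle h,Gh\rangle$ with $\|h\|_2^2=t(P_3,U)$, everything hinges on the single inequality $\|G\|_{\mathrm{op}}\le t(C_4,U)^{1/4}$ for the kernel $g(x,y)=t_{xy}(F\setminus\{u,v\},U)$, and you correctly observe that $\|U\|_{\mathrm{op}}\le t(C_4,U)^{1/4}$ handles the base case $F=P_4$. But the step you defer --- bounding $\|G\|_{\mathrm{op}}$ by a single factor of $\|U\|_{\mathrm{op}}$ via ``peeling pendants, suppressing degree-$2$ vertices as compositions, factoring components'' --- is exactly where the difficulty lives, and your toolkit does not cover it. As soon as $F\setminus\{u,v\}$ has a branch vertex (e.g.\ $u'$ and $v'$ joined by two internally disjoint paths, so that $g$ is a \emph{pointwise} product of path kernels such as $\bigl((U\circ U\T)(x,y)\bigr)^2$), the kernel is a Schur product, not an operator composition. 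For integral operators the Schur-product bound $\|k_1k_2\|_{\mathrm{op}}\le\|k_1\|_{\mathrm{op}}\|k_2\|_{\mathrm{op}}$ is false (the matrix proof compresses $A\otimes B$ to the diagonal, which has measure zero in the continuum; a step-function ``random sign'' $U\in\WW_1$ has tiny operator norm while $U\cdot U\equiv1$ has operator norm $1$). So the claimed induction ``via edge- and vertex-peeling reductions'' has no mechanism for degree-$\ge3$ internal vertices, and you acknowledge this is the main obstacle without resolving it. The same issue infects your second case: the assertion $|t(F_*,U)|\le t(C_4,U)^{1/4}$ for an arbitrary graph $F_*$ with one edge is again deferred to ``the same operator-norm reduction,'' and the $\deg_F(w)>2$ subcase is only a statement of intent (``I would apply Lemma~\ref{LEM:ATTACHED} with a set $S$ chosen to\dots''), with no verification that the hypothesis of that lemma (each node of $S$ meeting at most two components) can be arranged.

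The paper's route is different and much shorter precisely because Lemma~\ref{LEM:ATTACHED} (resting on the edge-variable Cauchy--Schwarz of Lemma~\ref{LEM:C-S}) is the correct substitute for the Schur-product bound you are missing: when $u'\ne v'$ and there is a third node $w\notin\{u,v,u',v'\}$, one applies Lemma~\ref{LEM:ATTACHED} to the three stars at $u,v,w$ to get $F^2\le P_3\cdot P_3\cdot K_{2,d}\le P_3^2C_4$ (or $F\le P_3^{3/2}\le P_3C_4^{1/4}$ if $\deg w=1$), and only $F=P_4$ remains, settled by Lemma~\ref{LEM:PATHS}(b). I would encourage you to redo your argument with Lemma~\ref{LEM:ATTACHED} in place of operator norms; as written, the proposal has a genuine gap at its central inequality. (You should also be aware that the remaining configuration --- all degree-$1$ nodes sharing the neighbor $w$, with $w$ adjacent to everything else --- is the genuinely delicate one; it is where your sketch is vaguest, and it is also where the paper's own argument is least explicit, so it deserves a complete treatment rather than a gesture.)
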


\begin{proof}
\noindent{\bf Case 1.} First, suppose that $F$ has two nonadjacent
nodes $u,v$ of degree $1$ whose neighbors $u'$ and $v'$ are
different. If there is a node $w\not=u,v,u',v'$ of degree $d\ge 2$,
then we can apply Lemma \ref{LEM:ATTACHED} to the stars of $u$, $v$
and $w$, to get
\[
F\le P_3 K_{2,d}^{1/2}\le P_3 C_4^{1/2}.
\]
If there is a node $w\not=u,v,u',v'$ of degree $1$, then a similar
application of Lemma \ref{LEM:ATTACHED} gives that
\[
F\le P_3^{3/2}\le P_3 C_4^{1/4}.
\]
Finally, if $V(F)=\{u,v,u',v'\}$, then $F=P_4$, and the bound follows
from Lemma \ref{LEM:PATHS}(b).

\noindent{\bf Case 2.} Suppose that all nodes of $F$ of degree $1$
have a common neighbor $w$. Let $F_0$ denote the subgraph obtained by
deleting the nodes of degree $1$. Since $F$ is not a star, $F_0$ must
have at least two edges. If $F_0$ has a node not adjacent to $w$,
then we conclude similarly as above. So suppose that $w$ is adjacent
to all the other nodes of $F_0$. Let $F_1$ denote the subgraph formed
by the edges incident with $w$ (a star), with the nodes in $F_0-w$
labeled.
\end{proof}

\begin{lemma}\label{LEM:ONE-END}
Let $F$ be a bigraph with exactly one node of degree $1$ and with
girth $2r$. Then
\[
F\le \frac12 (C_{2r}+P_3)C_4^{1/8} .
\]
\end{lemma}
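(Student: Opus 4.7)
The plan is to apply the Cauchy--Schwarz inequality \eqref{EQ:C-H} at the unique leaf $u$ of $F$. Let $u'$ denote the neighbor of $u$, let $F_0$ be the $1$-labeled graph obtained from $F-u$ by labeling $u'$, and let $e=P_2'$ be the single edge with one endpoint labeled. Then $F = e\gprod F_0$, and \eqref{EQ:C-H} yields
\[
t(F,U)^2 \;\le\; t(e^{\gpow 2},U)\cdot t(F_0^{\gpow 2},U) \;=\; t(P_3,U)\cdot t(\tilde F,U),
\]
where $\tilde F := F_0^{\gpow 2}$ is the graph obtained by taking two copies of $F-u$ and identifying the two copies of $u'$.

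The heart of the proof is the claim that $\tilde F \le C_{2r}C_4^{1/4}$, which I would get by invoking Lemma \ref{LEM:MAIN}. I would check its hypotheses in turn: $\tilde F$ is bipartite (inherited from $F$); its minimum degree is at least $2$ (every vertex other than $u'$ retains its $F$-degree, which is $\ge 2$ because $u$ was the sole leaf, and $u'$ gains new degree $2(\deg_F(u')-1)\ge 2$); its girth equals $2r$ (any cycle of $\tilde F$ must lie inside a single copy of $F-u$, because the two copies share only $u'$ and a cycle can visit $u'$ at most once, so its length is $\ge 2r$, while the $2r$-cycle of $F$ avoids the leaf $u$ and persists in $\tilde F$); $\tilde F$ is not a single cycle (else $F-u$ would be a path with $u'$ at one end, producing a second leaf in $F$); and $\tilde F$ is not complete bipartite (since $u'$ is a cut vertex---its removal leaves two disjoint copies of $F-u-u'$, which are nonempty because $F$ has more than two vertices---while $K_{a,b}$ with $a,b\ge 2$ is $2$-connected, and stars $K_{1,b}$ are ruled out by $\tilde F$ containing a cycle).

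Substituting $\tilde F \le C_{2r}C_4^{1/4}$ back gives
\[
t(F,U) \;\le\; \sqrt{t(P_3,U)\,t(C_{2r},U)}\cdot t(C_4,U)^{1/8}.
\]
Since $t(P_3,U)=\int_I\bigl(\int_I U(x,y)\,dy\bigr)^2 dx\ge 0$ and $t(C_{2r},U)\ge 0$ by Lemma \ref{LEM:MON}, the AM--GM inequality $\sqrt{ab}\le\tfrac12(a+b)$ completes the proof. The main obstacle is the case analysis ruling out the two exclusions of Lemma \ref{LEM:MAIN} for $\tilde F$; if $F$ is disconnected, the components not containing $u$ have minimum degree $\ge 2$ and girth $\ge 2r$, and can be handled separately using Lemma \ref{LEM:MAIN} together with the monotonicity of $t(C_{2k},U)$ from Lemma \ref{LEM:MON}.
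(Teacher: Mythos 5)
Your proof is correct and follows essentially the same route as the paper: write $F \cong P_2'\gprod F_0$, apply Cauchy--Schwarz to get $F^2\le P_3\,F_0^{\gpow2}$, bound $F_0^{\gpow2}\le C_{2r}C_4^{1/4}$ via Lemma \ref{LEM:MAIN}, and finish with AM--GM. You are in fact somewhat more careful than the paper, which does not explicitly verify that $F_0^{\gpow2}$ is neither a single cycle nor a complete bipartite graph.
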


\begin{proof}
Let $v$ be the unique node of degree $1$. We can write $F\cong
F_0\gprod P_2'$, where $F_0$ is a $1$-labeled graph in which all
nodes except possibly the labeled node $v$ have degrees at least $2$.
By \eqref{EQ:FF1F2}, we get that $F^2\le
(P'_2)^{\gpow2}F_0^{\gpow2}\cong P_3F_0^{\gpow2}$. Here
$F_0^{\gpow2}$ is a graph with girth $2r$ and all degrees at least
$2$. Hence Lemma \ref{LEM:MAIN}, we get $F^2\le P_3 C_{2r}
C_4^{1/4}$. Thus
\begin{align*}
|t(F,U)|&\le\sqrt{t(P_3,U)t(C_{2r},U)}t(C_4,U)^{1/8}\\
&\le \frac12 (t(C_{2r},U)+t(P_3,U))t(C_4,U)^{1/8}.
\end{align*}
\end{proof}

\section{Local Sidorenko Conjecture}

The Sidorenko Conjecture asserts that $t(F,W)$ is minimized by the
function $W\equiv 1$ among all functions $W\ge 0$ with $\int W=1$.
The following theorem asserts that this is true at least locally.

\begin{theorem}\label{THM:CLOSE}
Let $F=(V,E)$ be a simple bigraph. Let $W\in\WW$ with $\int W=1$,
$0\le W\le 2$ and $\|W-1\|_\square \le 2^{-8m}$. Then $t(F,W)\ge 1$.
\end{theorem}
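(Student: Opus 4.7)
The plan is to write $U = W - 1$, so that $U \in \WW_1$ (since $0\le W\le 2$), $\int U = 0$ (since $\int W = 1$), and $\|U\|_\square \le 2^{-8m}$. Expanding $\prod_{e\in E}(1+U_e)$ multilinearly over the edges of $F$,
\[
t(F, W) = \sum_{E' \subseteq E} t(F|_{E'}, U) = 1 + \sum_{|E'| \ge 2} t(F|_{E'}, U),
\]
where $F|_{E'}$ is the subgraph of $F$ with edge set $E'$. The $E' = \emptyset$ term contributes $1$, every singleton term contributes $\int U = 0$, and every $E'$ whose associated subgraph has a connected component isomorphic to $K_2$ also contributes $0$ (the $K_2$-factor equals $\int U = 0$). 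Thus only those $E'$ with $|E'|\ge 2$ and all connected components having at least two edges survive.

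For each surviving term I would bound $|t(F|_{E'},U)|$ by analyzing its connected components. The decisive quantitative input is $t(C_4, U)\le\|U\|_\square\le 2^{-8m}$, whence $t(C_4, U)^{1/8}\le 2^{-m}$; this is the factor appearing in the bound of Lemma~\ref{LEM:ONE-END}, and it is exactly what absorbs the $2^m$ subsets summed over, explaining the exponent $8m$ in the hypothesis. For a connected component $F'$ with minimum degree $\ge 2$ that is neither a single cycle nor complete bipartite, Lemma~\ref{LEM:MAIN} applies. A single even cycle $C_{2r}$ contributes \emph{nonneg}atively with $0\le t(C_{2r},U)\le t(C_4, U)^{r/2}$ by Corollary~\ref{COR:4CYCLE} and Lemma~\ref{LEM:MON}; a complete bipartite $K_{a,b}$ with $a,b\ge 2$ yields $t(K_{a,b},U)\le t(C_4, U)^{1/4}$ by iterated edge-labeled Cauchy--Schwarz combined with Lemma~\ref{LEM:MON2}. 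Star components $K_{1,k}$ with $k\ge 2$ satisfy $|t(K_{1,k}, U)|\le\int g^2=t(P_3,U)\le 2\|U\|_\square$, using $|g|\le 1$ where $g(x)=\int U(x,y)\,dy$. Components with one pendant fall under Lemma~\ref{LEM:ONE-END}, and with two non-adjacent pendants, under Lemma~\ref{LEM:2END}. Multiplicativity of $t(\cdot,U)$ over disjoint components then converts these per-component bounds into a bound on $|t(F|_{E'},U)|$.

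The main obstacle is to upgrade the triangle-inequality estimate $\sum_{|E'|\ge 2}|t(F|_{E'},U)|=O(2^{-\Omega(m)})$ --- which yields only $t(F,W)\ge 1-O(2^{-\Omega(m)})$ --- to the sharp inequality $t(F,W)\ge 1$. For this I would exploit the sign structure of the expansion. The quadratic contribution $\sum_{|E'|=2}t(F|_{E'},U)$ is a \emph{nonneg} combination of $t(P_3,U)\ge 0$ terms, one for each pair of $F$-edges sharing a vertex (two orientations, corresponding to the middle vertex lying in either bipartition class); and every even-cycle component contributes nonnegatively by Lemma~\ref{LEM:MON}. Moreover, if both one-dimensional margins of $U$ vanish, then every subgraph containing a pendant vertex contributes $0$ (the pendant variable integrates against a zero-margin function), leaving only minimum-degree-$\ge 2$ terms which are bounded by $\|U\|_\square^{5/4}$, far smaller than any single $P_3$-term would be in the generic case. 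Carefully balancing the possibly-signed higher-order contributions against these manifestly-nonneg quadratic and cycle contributions --- so that the residual never dips below the nonneg bulk --- is the crux of the proof and the place where the specific calibration $2^{-8m}$ (coming from the exponent $\tfrac18$ in Lemma~\ref{LEM:ONE-END}) is used.
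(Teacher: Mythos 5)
Your setup — the expansion $t(F,W)=\sum_{F'}t(F',U)$, the vanishing of the singleton and $K_2$-component terms, the identification of $t(P_3,U)$ and the even-cycle terms as the nonnegative bulk, and the role of $t(C_4,U)\le\|U\|_\square\le 2^{-8m}$ in absorbing the $2^m$ subgraphs — matches the paper's. But the proposal stops exactly where the proof has to start: you yourself label the balancing of the signed higher-order terms against the nonnegative bulk as ``the crux'' without carrying it out, and the per-component bounds you do propose would not suffice for it.

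Two concrete failures. First, stars: a star $K_{1,k}$ with $k$ odd can contribute \emph{negatively}, and at a vertex of degree $d_i$ there are $\sum_{k\ge3}\binom{d_i}{k}$ such subgraphs; your bound $|t(K_{1,k},U)|\le t(P_3,U)$ applied term by term allows a loss of order $2^{d_i}\,t(P_3,U)$, which the quadratic gain of $\binom{d_i}{2}t(P_3,U)$ cannot cover. The paper avoids this by \emph{grouping} all star subgraphs at a fixed vertex and applying Bernoulli's inequality pointwise, $\sum_{k\ge2}\binom{d_i}{k}t(x)^k=(1+t(x))^{d_i}-1-d_it(x)\ge(d_i-1)t(x)^2$, so that the whole group is bounded below by $(d_i-1)t(P_3,U)\ge0$ and needs no cancellation at all. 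Second, complete bipartite components: your proposed absolute bound $t(K_{a,b},U)\le t(C_4,U)^{1/4}$ is not of the form $\eps\cdot(\text{nonnegative term occurring in the expansion})$ — note $t(C_4,U)^{1/4}\gg t(C_4,U)$ when $t(C_4,U)$ is small — so summing it over up to $2^m$ subgraphs yields only $t(F,W)\ge1-2^{-m}$, not the sharp $t(F,W)\ge1$. The paper again groups these subgraphs (fixing one bipartition class $A$ and summing over all choices of the other class) and shows the grouped sum is pointwise nonnegative by the same Bernoulli computation. The organizing principle you are missing is that every potentially negative term must either be bounded by a small multiple of a nonnegative term that itself appears in the sum (as in the paper's parts (b), (d), (e), with bounds $2^{-2m}t(P_3,U)$ and $2^{-2m}t(C_{2r},U)$), or be absorbed into a group whose total is manifestly nonnegative; isolated absolute bounds can never close the gap to the exact inequality.
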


\begin{proof}
We may assume that $F$ is connected, since otherwise, the argument
can be applied to each component. Let $U=W-1$, then we have the
expansion
\begin{equation}\label{EQ:EXPAND}
t(F,W)=\sum_{F'} t(F',U),
\end{equation}
where $F'$ ranges over all spanning subgraphs of $F$. Since isolated
nodes can be ignored, we may instead sum over all subgraphs with no
isolated nodes (including the term $F'=K_0$, the empty graph). One
term is $t(K_0,U)=1$, and every term containing a component
isomorphic to $K_2$ is $0$ since $t(K_2,U)=\int U=0$.

Based on \eqref{EQ:POS}, we can identify two special kinds of
nonnegative terms in \eqref{EQ:EXPAND}, corresponding to copies of
$P_3$ and to cycles in $F$. We show that the remaining terms do not
cancel these, by grouping them appropriately.

(a) For each node $i\in V$, let $\sum_{\nabla(i)}$ denote summation
over all subgraphs $F'$ with at least two edges that consist of edges
incident with $i$. Let $d_i$ denote the degree of $i$ in $F$, assume
that $d_i\ge 2$, and set $t(x)=t_x(K_2',U)$. Then using that $t(x)\ge
-1$ and Bernoulli's Inequality,
\begin{align*}
\sum_{\nabla(i)} t(F',U) &= \int_I \sum_{k=2}^{d_i} \binom{d_i}{k}
t(x)^k\,dx = \int_I (1+t(x))^{d_i}-1-d_it(x)\,dx\\
&\ge \int_I (1+t(x))(1+(d_i-1)t(x))-1-d_it(x)\,dx\\
&= \int_I (d_i-1)t(x)^2\,dx = (d_i-1)t(P_3,U).
\end{align*}
Hence the terms in \eqref{EQ:EXPAND} that correspond to stars sum to
at least
\[
\sum_{\rm stars} t(F',U) \ge (d_i-1)t(P_3,U) = (2m-n)t(P_3,U).
\]

(b) Next, consider those terms $F'$ with at least two endnodes that
are not stars. For such a term we have
\[
|t(F',U)| \le  t(P_3,U) t(C_4,U)^{1/4}\le 2^{-2m} t(P_3,U)
\]
(if there are two nonadjacent endpoints, then the left hand side is
$0$; else, this follows from Lemma \ref{LEM:2END}). The sum of these
terms is, in absolute value, at most
\[
2^m 2^{-2m} t(P_3,U)<t(P_3,U).
\]

(c) The next special sum we consider consists of complete bipartite
graphs that are not stars. Fixing a subset $A$ with $|A|\ge 2$ in the
first bipartition class of $F$ with $h\ge 2$ common neighbors, and
fixing the variables in $A$, the sum over such complete bigraphs with
$A$ as one of the bipartition classes is
\[
\sum_{j=2}^h \binom{h}{j}\left(\int_I \prod_{i\in A}
U(x_i,y)\,dy\right)^j \ge (h-1)\left(\int_I \prod_{i\in A}
U(x_i,y)\,dy\right)^2
\]
by the same computation as above. This gives that this sum is
nonnegative.

(d) If $F'$ has all degrees at least $2$ and girth $2r$, and it is
not a single cycle or complete bipartite, then $F'\le
C_{2r}C_4^{1/4}$ by Lemma \ref{LEM:MAIN}, and so
\[
|t(F',U)|\le t(C_{2r},U)t(C_4,U)^{1/4}\le 2^{-2m} t(C_{2r},U).
\]
So if we fix $r$ and sum over all such subgraphs, we get, in absolute
value, at most
\[
2^m 2^{-2m} t(C_{2r},U)< \frac12 t(C_{2r},U).
\]

(e) Finally, if $F'$ has exactly one node of degree $1$ and girth
$2r$, then by Lemma \ref{LEM:ONE-END}
\begin{align*}
|t(F',U)|&\le \frac12(t(P_3,U)+t(C_{2r},U))t(C_4,U)^{1/8}\\
&\le 2^{-m-1}(t(P_3,U)+ t(C_{2r},U)).
\end{align*}
If we sum over all such subgraphs $F'$, then we get less than
$t(P_3,U)+\frac12\sum_{r\ge 2}t(C_{2r},U)$.

The sum in (a) is sufficient to compensate for the sum in (b) and the
first term in (e), while the sum over cycles compensates for the sum
in (d) and the second sum in (e). This proves that the total sum in
\eqref{EQ:EXPAND} is nonnegative.
\end{proof}

\subsection{Variations}

We could do the computations above more carefully, and use the
Neumann-Schatten norm $t(C_4,U)^{1/4}$ instead of the cut norm in the
statement. The best one can achieve this way is to replace the bound
of $2^{-8m}$ by about $2^{-m}$:

\begin{theorem}\label{THM:CLOSE2}
Let $F=(V,E)$ be a simple bigraph. Let $W\in\WW$ with $\int W=1$ and
$0\le W\le 2$ and $t(C_4,W) \le 2^{-4m}$. Then $t(F,W)\ge 1$.
\end{theorem}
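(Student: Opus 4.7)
The plan is to follow the proof of Theorem \ref{THM:CLOSE} step by step, replacing the cut-norm estimates by direct Schatten-type bounds. Set $U = W - 1 \in \WW_1$, so $\int U = 0$; reading the hypothesis as $t(C_4, U) \le 2^{-4m}$, we obtain the two key estimates $t(C_4, U)^{1/4} \le 2^{-m}$ and $t(C_4, U)^{1/8} \le 2^{-m/2}$. Expand
\[
t(F, W) = 1 + \sum_{F'} t(F', U),
\]
where $F'$ ranges over non-empty spanning subgraphs of $F$ with no $K_2$ component (all other terms vanish), and partition these into the same five classes used before: (a) stars with at least two edges, (b) non-star subgraphs with two or more leaves, (c) complete bipartite non-stars, (d) subgraphs of minimum degree $\ge 2$ and girth $2r$ that are neither single cycles nor complete bipartite, and (e) subgraphs with exactly one leaf and girth $2r$.

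The positive contributions are unchanged. Bernoulli's inequality yields at least $(2m-n)\,t(P_3, U)$ from class (a) and a nonnegative contribution from class (c); single cycles $C_{2r}$ contribute $t(C_{2r}, U) \ge 0$ each by Lemma \ref{LEM:MON}. For the possibly negative classes, applying Lemmas \ref{LEM:2END}, \ref{LEM:MAIN}, and \ref{LEM:ONE-END} directly yields per-subgraph bounds $|t(F',U)| \le 2^{-m} t(P_3,U)$ in (b), $|t(F',U)| \le 2^{-m} t(C_{2r},U)$ in (d), and, using Cauchy--Schwarz in place of the AM--GM step inside Lemma \ref{LEM:ONE-END}, $|t(F',U)| \le 2^{-m/2}\,\sqrt{t(P_3,U)\,t(C_{2r},U)}$ in (e). Summing over the at most $2^m$ subgraphs per class, class (b) contributes at most $t(P_3, U)$ in absolute value, and class (d) with girth $2r$ contributes at most $t(C_{2r}, U)$; these are absorbed by the star contribution $(2m-n) t(P_3,U) \ge (m-1) t(P_3, U)$ and by the corresponding cycle contribution from (c).

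The main obstacle is class (e), where the naive product $2^m \cdot 2^{-m/2} = 2^{m/2}$ overflows. I would overcome this by invoking Corollary \ref{COR:4CYCLE} to get $t(C_{2r},U) \le t(C_4,U)^{r/2} \le 2^{-2rm}$ for $r \ge 2$, which sharpens each class-(e) bound with girth $2r$ to $\sqrt{t(P_3,U)}\,2^{-rm - m/2}$. Summing over the at most $2^m$ subgraphs per girth and geometrically over $r \ge 2$ gives a total of order $\sqrt{t(P_3,U)}\,2^{-3m/2}$. A final AM--GM step, $2\sqrt{ab} \le \varepsilon a + b/\varepsilon$ with $\varepsilon$ of order $1/m$, splits this into a piece $O(t(P_3,U)/m)$, absorbed by the star contribution, and a tiny absolute residue $O(m\,2^{-3m})$, which is dominated by the strict slack in the Bernoulli estimate of class (a) (for instance, the $d_i \ge 3$ stars yield a positive error term of order $t(P_3,U)^2$ beyond the linear lower bound used above). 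Combining everything yields $\sum_{F'} t(F',U) \ge 0$, hence $t(F, W) \ge 1$.
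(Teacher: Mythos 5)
The paper offers no actual proof of Theorem~\ref{THM:CLOSE2} --- it is stated only with the remark that one ``could do the computations above more carefully'' --- so your attempt has to be judged on its own. Your architecture is the right one: you reuse the five-class decomposition, you correctly note that classes (b) and (d) survive the weakening of $2^{-2m}$ to $2^{-m}$ (the per-girth coefficient becomes $1$ instead of $2^{-m}$, still coverable by $(2m-n)t(P_3,U)$ and by the single-cycle terms), and you correctly identify class (e) as the one place where the naive count $2^m\cdot 2^{-m/2}$ overflows. The gap is in your resolution of class (e). After bounding $t(C_{2r},U)\le t(C_4,U)^{r/2}\le 2^{-2rm}$ you are left with a total class-(e) contribution of order $2^{-3m/2}\sqrt{t(P_3,U)}$, and you must dominate a \emph{square root} of $t(P_3,U)$ by quantities that are \emph{linear} in $t(P_3,U)$ (the star surplus) or quadratic in it (the Bernoulli slack). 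This fails whenever $t(P_3,U)$ is positive but much smaller than $2^{-3m}$, a regime that nothing in the hypotheses excludes: $t(P_3,U)=\int_x(\int_yU(x,y)\,dy)^2\,dx$ has no lower bound. In particular, the claim that the absolute residue $O(m\,2^{-3m})$ produced by your AM--GM split is ``dominated by the strict slack in the Bernoulli estimate'' cannot be right: that slack is $\int t(x)^2(1+t(x))\,dx$-type, hence tends to $0$ with $U$, while $m\,2^{-3m}$ is a fixed constant; a vanishing quantity cannot dominate a fixed one.

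Nor can the cycle terms rescue the argument in the form you set it up. Keeping the factor $\sqrt{t(C_{2r},U)}$ and trying to absorb $N_r^{(e)}\,2^{-m/2}\sqrt{t(P_3,U)\,t(C_{2r},U)}$ into $\beta_r\,t(P_3,U)+\alpha_r\,t(C_{2r},U)$ requires, by the optimality of AM--GM, $4\alpha_r\beta_r\ge (N_r^{(e)})^2 2^{-m}$. Even with the improved count $N_r^{(e)}\le c_r 2^{m-2r}$ (every such subgraph contains one of the $c_r$ cycles of length $2r$) and the full cycle surplus $\alpha_r\approx c_r$, this forces $\beta_r\gtrsim c_r 2^{m-4r}$, which is exponentially larger than the available total $\sum_r\beta_r\le 2m-n$ as soon as $F$ contains a cycle of length noticeably smaller than $m/2$ (e.g.\ any $4$-cycle, for $m$ large). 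So as written, the chain of inequalities you propose is not valid for general $F$; treating $t(P_3,U)$ and the $t(C_{2r},U)$ as independent small parameters, an adversarial choice defeats the bookkeeping. Closing the theorem along these lines needs an additional idea --- for instance a class-(e) bound in which the pendant-path factor is estimated by something with a usable lower bound, or a summation that does not pay the full triangle-inequality cost over the up to $2^{m-2r}c_r$ subgraphs --- and your write-up does not supply one.
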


One can combine the conditions and assume a bound on
$\|W-1\|_\infty$. It follows from the Theorem that $\|W-1\|_\infty\le
2^{-8m}$ suffices. Going through the same arguments (in fact, in a
somewhat simpler form) we get:

\begin{theorem}\label{THM:LOC-SID-INFTY}
Let $F=(V,E)$ be a simple bigraph. Let $W\in\WW$ with $\int W=1$ and
$\|W-1\|_\infty \le 1/(4m)$. Then $t(F,W)\ge 1$.
\end{theorem}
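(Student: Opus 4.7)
Set $U=W-1$, so $\|U\|_\infty \le 1/(4m)$ and $\int U = 0$; in particular $W \ge 1 - 1/(4m) \ge 3/4 > 0$. The plan is to mimic the proof of Theorem~\ref{THM:CLOSE} step by step, replacing the cut-norm estimate $t(C_4,U)^{1/4} \le \|U\|_\square^{1/4} \le 2^{-2m}$ throughout by the much simpler $L_\infty$-estimate
\[
t(C_4,U)^{1/4} \;\le\; \|U\|_\infty \;\le\; \frac{1}{4m}.
\]

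As in Theorem~\ref{THM:CLOSE} I expand $t(F,W) = 1 + \sum_{F'} t(F',U)$ over nonempty spanning subgraphs $F'$, discard terms with a $K_2$-component (they vanish since $t(K_2,U)=0$), and partition the rest into the five classes (a)--(e) used there, plus single even cycles. The positive classes go through \emph{verbatim}: Bernoulli's inequality is still applicable, since the function $t(x)=\int_I U(x,y)\,dy$ satisfies $|t(x)|\le \|U\|_\infty \le 1/(4m) \le 1$, so (a) contributes at least $(2m-n)\,t(P_3,U)\ge 0$, (c) is nonnegative by the analogous Bernoulli computation for complete bipartite subgraphs, and each single cycle contributes $t(C_{2r},U)\ge 0$ by Lemma~\ref{LEM:MON}. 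For the error classes (b), (d), (e) I apply Lemmas~\ref{LEM:2END}, \ref{LEM:MAIN}, \ref{LEM:ONE-END} respectively, combined with the $L_\infty$ bound above: e.g.\ for a class-(d) subgraph of girth $2r$ one gets $|t(F',U)|\le t(C_{2r},U)\cdot t(C_4,U)^{1/4}\le t(C_{2r},U)/(4m)$.

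The main obstacle is the quantitative bookkeeping. In Theorem~\ref{THM:CLOSE} the exponentially small factor $2^{-2m}$ trivially swallowed the crude count $2^m$ of subgraphs in each error class, whereas here the polynomial factor $1/(4m)$ cannot. To compensate, I would classify error-class subgraphs by their edge count $k=|E(F')|$ and use the stronger trivial bound $|t(F',U)|\le \|U\|_\infty^{k} \le (4m)^{-k}$; combined with $\binom{m}{k}\le m^{k}/k!$ this yields the rapidly convergent estimate
\[
\sum_{k\ge 2}\binom{m}{k}(4m)^{-k} \;\le\; \sum_{k\ge 2}\frac{1}{k!\,4^{k}} \;=\; e^{1/4}-\tfrac{5}{4}.
\]
Moreover, for class (b) the Cauchy--Schwarz refinement $|t(F',U)|\le \|U\|_\infty^{|E(F')|-2}\,t(P_3,U)$, obtained by integrating out the two nonadjacent endnodes and using $(\int|t|)^{2}\le t(P_3,U)$, ties the error directly to the positive $P_3$-contribution from (a); an analogous refinement handles (d) against the cycle contributions and (e) against both. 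Combining the verbatim positive parts, the $L_\infty$ simplification of the error bounds, and this edge-count-refined counting is what should deliver $\sum t(F',U) \ge 0$ and hence $t(F,W)\ge 1$.
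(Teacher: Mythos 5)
First, note that the paper itself offers no written proof of Theorem~\ref{THM:LOC-SID-INFTY}: it is stated as following from ``the same arguments in a somewhat simpler form,'' so your task is genuinely to supply the missing bookkeeping. You have correctly identified the crux --- the factor $t(C_4,U)^{1/4}\le 1/(4m)$ no longer swallows the crude count $2^m$ of subgraphs in each error class --- but the fixes you propose do not close the gap. (i) The absolute estimate $\sum_{k\ge2}\binom{m}{k}(4m)^{-k}\le e^{1/4}-\tfrac54\approx 0.034$ only yields $t(F,W)\ge 1-0.034$, not $\ge 1$; since every term scales with $U$, no absolute constant can be beaten by the positive terms, so the error must be dominated \emph{term class by term class} by a matching positive quantity. (ii) For class (b) your refinement $|t(F',U)|\le\|U\|_\infty^{k-2}\,t(P_3,U)$ is correct, but combined with the count $\binom{m}{k}$ it gives, already at $k=3$, a contribution up to $\binom{m}{3}(4m)^{-1}\,t(P_3,U)\approx \tfrac{m^2}{24}\,t(P_3,U)$, which exceeds the available positive mass $(2m-n)\,t(P_3,U)\le 2m\,t(P_3,U)$ once $m$ is moderately large. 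A sharper structural count (e.g.\ of $P_4$'s via degrees) would be needed, and you do not supply it.

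The more serious gap is classes (d) and (e). The ``analogous refinement'' you invoke does not exist in the paper's toolkit: Lemma~\ref{LEM:MAIN} gives $|t(F',U)|\le t(C_{2r},U)\,t(C_4,U)^{1/4}$ with \emph{no} additional decay in $|E(F')|$, and you cannot manufacture one by ``integrating out'' the edges outside a shortest cycle --- bounding the extra factors by $\|U\|_\infty$ leaves you with $t(C_{2r},|U|)$ (or worse), which is \emph{not} bounded by the signed density $t(C_{2r},U)$ that the positive cycle terms provide. Meanwhile the number of spanning subgraphs of $F$ with all degrees at least $2$ can be $2^{\Theta(m)}$ (e.g.\ in $K_{a,a}$), so the single damping factor $1/(4m)$ leaves a sum of order $2^{\Theta(m)}t(C_{2r},U)/(4m)$ against at most $m^{O(r)}$ positive cycle terms of size $t(C_{2r},U)$ each. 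To make your plan work one would have to prove an edge-count-sensitive strengthening of Lemma~\ref{LEM:MAIN} of the shape $|t(F',U)|\le \|U\|_\infty^{|E(F')|-c_r}\,t(C_{2r},U)$; that is a substantive claim requiring a reworking of the Cauchy--Schwarz machinery, not a routine adaptation, and as written your proof does not establish the theorem.
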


The condition that $\|W-1\|_\infty \le 1/(4m)$ implies trivially that
$0\le W\le 2$. It would be interesting to get rid of the condition
that $W\le 2$ under an appropriate bound on $\|W-1\|_\square$. We can
only offer the following result.

\begin{theorem}\label{THM:LOC-SID-REG}
Let $F=(V,E)$ be a simple bigraph with $m$ edges, let $0<\eps<
2^{-1-8m}$, and let $W\in\WW$ such that $\int W=1$, $\int_{S\times T}
W\le 2\lambda(S)\lambda(T)$ whenever $\lambda(S),\lambda(T)\ge
2^{-4/\eps^2}$, and $\|W-1\|_\square \le 2^{-1-8m}$. Then $t(F,W)\ge
1-\eps$.
\end{theorem}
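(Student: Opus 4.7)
The plan is to approximate $W$ by a bounded step function $W'$ satisfying the hypotheses of Theorem \ref{THM:CLOSE}, apply that theorem, and then bound the error $t(F,W)-t(F,W')$. Set $\mu = 2^{-4/\eps^2}$ and partition $I$ into parts $V_1,\ldots,V_k$ each of Lebesgue measure at least $\mu$. Let $W' := W_{\PP}$ be the step function taking value $(\lambda(V_i)\lambda(V_j))^{-1}\int_{V_i\times V_j} W$ on the block $V_i \times V_j$. Since $\lambda(V_i),\lambda(V_j)\ge\mu=2^{-4/\eps^2}$, the regularity hypothesis gives $0\le W'\le 2$ everywhere. Also $\int W'=1$, and because the supremum defining $\|\cdot\|_\square$ for a step function is attained on unions of parts, $\|W'-1\|_\square = \|(W-1)_{\PP}\|_\square \le \|W-1\|_\square \le 2^{-1-8m} < 2^{-8m}$. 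Theorem \ref{THM:CLOSE} then gives $t(F,W')\ge 1$.

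Next, I would compare $t(F,W)$ to $t(F,W')$ by writing $W = W' + D$ with $D := W-W'$ and expanding
\[
t(F,W) = \sum_{E'\subseteq E(F)} T_{E'},\qquad T_{E'} := \int_{I^{V(F)}} \prod_{e\in E\setminus E'} W'(x_{a(e)},x_{b(e)}) \prod_{e\in E'} D(x_{a(e)},x_{b(e)})\,dx.
\]
Two properties of $D$ are decisive: by triangle inequality $\|D\|_\square \le \|W-1\|_\square + \|W'-1\|_\square \le 2^{-8m}$, and $\int_{V_i\times V_j} D = 0$ for every $i,j$. The $E'=\emptyset$ term is $t(F,W')\ge 1$. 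If $E'$ contains an \emph{isolated} edge $e_0$ whose endpoints are not incident to any other edge of $E'$, then integrating out every variable except $x_{a(e_0)}$ and $x_{b(e_0)}$ leaves a step function in those two variables times $D(x_{a(e_0)},x_{b(e_0)})$, whose total integral vanishes block by block and hence $T_{E'}=0$.

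For the remaining $E'$ (in which every edge of $E'$ shares an endpoint with another edge of $E'$), I would bound $|T_{E'}|$ by a counting-lemma-style estimate: exploit the fact that each $D$-factor has a partner $D$-factor at a shared vertex and apply a coupled Cauchy--Schwarz in the spirit of Lemmas \ref{LEM:C-S} and \ref{LEM:EDGE-WEIGHT}, using the uniform bound $W'\le 2$ to control the $W'$-factors. The target is $|T_{E'}|\le C_m\|D\|_\square\le C_m 2^{-8m}$ for a constant $C_m$ depending only on $F$; summing over the at most $2^m$ non-vanishing subsets $E'$ gives $|t(F,W) - t(F,W')|\le \eps$, and hence $t(F,W)\ge 1-\eps$.

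The main obstacle is the counting estimate above: the usual cut-norm counting lemma requires $L^\infty$ bounds on all graphons, but $D$ is unbounded above. The workaround is that, once isolated-edge subgraphs are eliminated by the zero-block-integral property, every $D$-factor has a neighbor $D$-factor meeting it at a shared vertex, and Cauchy--Schwarz over the shared coordinate replaces the $L^\infty$-unboundedness of $D$ by $L^2$-type bounds controlled by $\|D\|_\square$ together with the uniform bound on $W'$. The specific value $\mu = 2^{-4/\eps^2}$ is tuned so that the partition is coarse enough for the regularity hypothesis to give $W'\le 2$ on every block while still yielding an error bounded by $\eps$.
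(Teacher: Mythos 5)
Your overall strategy --- replace $W$ by the step function $W_\PP$, check that $W_\PP$ satisfies the hypotheses of Theorem \ref{THM:CLOSE}, and then bound $t(F,W)-t(F,W_\PP)$ --- is the same as the paper's, and the verification that $W_\PP$ is admissible (the use of the density hypothesis on sets of measure $\ge 2^{-4/\eps^2}$ to get $W_\PP\le 2$, the contraction $\|(W-1)_\PP\|_\square\le\|W-1\|_\square$, and the vanishing of terms with an isolated $D$-edge via $\int_{V_i\times V_j}D=0$) is correct. But the error-bounding step has two genuine gaps. First, your final error does not depend on $\eps$ at all: you take an \emph{arbitrary} partition into parts of measure $\ge 2^{-4/\eps^2}$, so the only control you have on $D=W-W_\PP$ is $\|D\|_\square\le 2^{-8m}$ from the triangle inequality, a fixed constant. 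Even granting your counting estimate $|T_{E'}|\le C_m\|D\|_\square$, summing over subsets gives $|t(F,W)-t(F,W_\PP)|\le 2^mC_m2^{-8m}$, which cannot be $\le\eps$ for arbitrarily small $\eps$ (and $\eps$ ranges over all of $(0,2^{-1-8m})$). The missing ingredient is the Weak Regularity Lemma of Frieze and Kannan: the partition must be \emph{chosen} so that $\|W-W_\PP\|_\square\le\eps/m$, which requires about $2^{4m^2/\eps^2}$ parts --- this is exactly why the density hypothesis must reach down to sets of measure $2^{-4/\eps^2}$, and it is how the $\eps$ in the conclusion actually arises. The paper then finishes with the counting lemma of \cite{LSz1} rather than your term-by-term expansion.

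Second, the counting estimate itself is not established, and the proposed workaround does not work. Your plan is to pair each $D$-factor with a neighboring $D$-factor at a shared vertex and apply Cauchy--Schwarz, claiming this yields ``$L^2$-type bounds controlled by $\|D\|_\square$.'' For unbounded kernels this is false: if $D$ equals $N$ on a square of side $\delta$ (suitably compensated elsewhere), then $\|D\|_\square$ is of order $N\delta^2$ while $\|D\|_2$ is of order $N\delta$, so $\|D\|_2$ can be arbitrarily large with $\|D\|_\square$ arbitrarily small. Cauchy--Schwarz over a shared coordinate therefore reduces $|T_{E'}|$ to quantities like $\int D(x,u)^2\,du$ that are not controlled by the cut norm of $D$, and the $L^\infty$ bound on $W'$ does not help with this. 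Without either an a priori bound on $W$ or a smallness statement for $\|W-W_\PP\|_\square$ coming from a regularity partition, the cross terms $T_{E'}$ cannot be dispatched this way.
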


\begin{proof}
For every function $W\in\WW$ and partition $\PP=\{V_1,\dots,V_k\}$ of
$I$ into a finite number of measurable sets with positive measure,
let $W_\PP$ denote the function obtained by averaging $W$ over the
partition classes; more precisely, we define
\[
W_\PP(x,y)=\frac{1}{\lambda(V_i)\lambda(V_j)}\int_{V_i\times
V_j}W(u,v)\,du\,dv
\]
for $x\in V_i$ and $y\in V_j$.

The Weak Regularity Lemma of Frieze and Kannan in the form used in
\cite{BCLSV1} implies that there is a partition $\PP$ into $K\le
2^{4l^2/\eps^2}$ equal measurable sets such that the function $W_\PP$
satisfies
\[
\|W_\PP-W\|_\square\le \frac{\eps}{m},
\]
and hence by the Counting Lemma 4.1 in \cite{LSz1},
\[
|t(F,W_\PP)-t(F,W)|\le \eps.
\]
Clearly $\int W_\PP=1$, $W_\PP\ge 0$, and for all $x\in V_i$ and
$y\in V_j$,
\[
W_\PP(x,y) = \frac{1}{\lambda(V_i)\lambda(V_j)}\int_{V_i\times
V_j}W(u,v)\,du\,dv \le 2.
\]
Furthermore,
\[
\|W_\PP-1\|_\square \le \|W_\PP-W\|_\square +\|W-1\|_\square \le
2^{-8m},
\]
Thus Theorem \ref{THM:CLOSE} implies that $t(F,W_\PP)\ge 1$, and
hence $t(F,W)\ge t(F,W_\PP)-\eps \ge 1-\eps$.
\end{proof}

\subsection{Graphic form}

We end with a graph-theoretic consequence of Theorem \ref{THM:CLOSE}.

\begin{corollary}\label{COR:GR-CLOSE}
Let $F$ be a bipartite graph with $n$ nodes and $m$ edges, and let
$G$ be a graph with $N$ nodes and $M=p\binom{N}{2}$ edges. Let
$\eps>0$. Assume that
\[
\bigl|e_G(S,T)-p|S|\cdot|T|\bigr|\le (2^{-8m}p-\eps)N^2
\]
for all $S,T\subseteq V(G)$, and
\[
e_G(S,T)\le 2p |S|\cdot|T|
\]
for all $S,T\subseteq V(G)$ with $|S|,|T|\ge 2^{-4m^2/\eps^2}N$. Then
\[
t(F,G)\ge p^l-\eps.
\]
\end{corollary}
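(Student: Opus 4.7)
The plan is to reduce the corollary to an application of Theorem~\ref{THM:LOC-SID-REG} to the rescaled graphon $W = W_G/p$, where $W_G$ is the $\{0,1\}$-valued graphon of $G$ (as defined in the introduction). Since $t(F,\cdot)$ is homogeneous of degree $m = |E(F)|$, we have $t(F,G) = t(F,W_G) = p^m\, t(F,W)$, so it suffices to prove $t(F,W) \ge 1 - \eps/p^m$ and multiply through.

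The three hypotheses of Theorem~\ref{THM:LOC-SID-REG} must then be checked for $W$ with parameter $\eps' = \eps/p^m$. The normalization $\int W = 1$ holds up to an $O(1/N)$ correction, which can be absorbed into $\eps$ (for $N$ large, as is implicit in the asymptotic setup). Since $W_G$ is a step function on the equipartition $J_1,\dots,J_N$, the supremum defining the cut norm is attained on sets $S,T$ that are unions of blocks (this is a standard reduction, as the objective is bilinear on $[0,1]^N \times [0,1]^N$); for $S = \bigcup_{i \in A} J_i$ and $T = \bigcup_{j \in B} J_j$,
\[
\int_{S \times T}(W-1)\,dx\,dy \;=\; \frac{e_G(A,B) - p\,|A|\,|B|}{p N^2},
\]
so the first hypothesis of the corollary translates to $\|W - 1\|_\square \le 2^{-8m} - \eps/p$. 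An analogous bookkeeping turns the upper density bound on $G$ into $\int_{S\times T} W \le 2\lambda(S)\lambda(T)$ whenever $\lambda(S),\lambda(T) \ge 2^{-4m^2/\eps^2}$; this matches the regularity-type hypothesis with parameter $\eps' = \eps/p^m$, since $p^{2m}\le 1\le m^2$ gives $2^{-4m^2/\eps^2} \le 2^{-4/(\eps')^2}$.

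Applying Theorem~\ref{THM:LOC-SID-REG} then yields $t(F,W) \ge 1 - \eps/p^m$, and multiplication by $p^m$ produces the claimed $t(F,G) \ge p^l - \eps$ (with $l = m$ in the notation of the introduction). The main obstacle is purely a bookkeeping one: one must verify that the corollary's cut-norm bound $2^{-8m}p - \eps$, after division by $p$, is at most the theorem's threshold $2^{-1-8m}$, that $\eps' = \eps/p^m$ is below $2^{-1-8m}$ (both of which follow in the stated range of $\eps$), and that the partition-measure threshold matches. The small additive error coming from $\int W = 1 - 1/N$ can be absorbed either by a harmless rescaling of $W$ or by the implicit assumption that $N$ is large.
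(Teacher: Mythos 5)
Your proposal is exactly the paper's proof---the paper's entire argument is ``apply Theorem~\ref{THM:LOC-SID-REG} to the function $W_G/p$''---and your bookkeeping (homogeneity $t(F,W_G)=p^m t(F,W_G/p)$, reduction of the cut norm to unions of blocks, the comparison $2^{-4m^2/\eps^2}\le 2^{-4/(\eps')^2}$, and absorbing the $O(1/N)$ error in $\int W$) simply fills in what the paper omits. One caveat: your assertion that $2^{-8m}-\eps/p\le 2^{-1-8m}$ ``follows in the stated range of $\eps$'' is not literally true for small $\eps$ (it would need $\eps\ge p\,2^{-1-8m}$), but this is a constant mismatch in the corollary's own statement rather than a flaw in your approach, and the paper's one-line proof glosses over it as well.
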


\begin{proof}
This follows by applying Theorem \ref{THM:LOC-SID-REG} to the
function $W_G/p$.
\end{proof}

\end{document}